\newcommand{\aff}{\mathrm{aff}\,}
\def\argmin{ \mathop{{\rm argmin}}}
\newcommand{\co}{\mathrm{conv}\,}
\newcommand{\cone}{\mathrm{cone}\,}
\newcommand{\cl}{\mathrm{cl}\;}
\newcommand{\clco}{\mathrm{\overline{conv}}\,}
\newcommand{\dom}{\mathrm{dom}\,}
\newcommand{\epi}{\mathrm{epi}\,}
\newcommand{\gph}{\mathrm{gph}\,}
\newcommand{\inter}{\mathrm{int}\,}
\newcommand{\para}{\mathrm{par}\,}
\newcommand{\ri}{\mathrm{ri}\,}
\newcommand{\rge}{\mathrm{rge}\,}
\newcommand{\hzn}{\mathrm{hzn}\,}
\newcommand{\lin}{\mathrm{span}\,}
\newcommand{\R}{\mathbb{R}}
\newcommand{\bS}{\mathbb{S}}
\newcommand{\rbar}{\overline{\mathbb R}}
\newcommand{\rp}{\mathbb R\cup\{+\infty\}}
\newcommand{\bR}{\mathbb{R}}
\def\bS{\mathbb{S}}
\newcommand{\tr}{\mathrm{tr}\,}
\newcommand{\id}{\mathrm{id}}
\newcommand{\bN}{\mathbb{N}}
\newcommand{\bE}{\mathbb{E}}
\newcommand{\ip}[2]{\left\langle #1,\, #2\right\rangle}
\newcommand{\set}[2]{\left\{#1\,\left\vert\; #2\right.\right\}}
\newcommand{\cD}{\mathcal{D}}
\newcommand{\cN}{\mathcal{N}}
\newcommand{\AND}{\ \mbox{ and }\ }
\newcommand{\infconv}{\mathbin{\mbox{\small$\square$}}}
\newcommand{\Epi}[2]{{#1}\mbox{-}\mathrm{epi}{\,#2}}
\newcommand{\Kp}{\mbox{-}K^\circ}
\newtheorem{proposition}{Proposition}%[section]
\newtheorem{theorem}[proposition]{Theorem}
\newtheorem{corollary}[proposition]{Corollary}
\newtheorem{lemma}[proposition]{Lemma}
\newtheorem{definition}[proposition]{Definition}
\newtheorem{example}[proposition]{Example}
\newtheorem{remark}[proposition]{Remark}
\title{A note on the   $K$-epigraph}
\begin{document}

\author[1]{Armand Gissler}
\author[2]{Tim Hoheisel}

\affil[1]{Department of Mathematics, \'Ecole Normale Sup\'erieure Paris-Saclay}
\affil[2]{Department of Mathematics and Statistics, McGill University, Montr\'eal
%, 805 Sherbrooke St West,
%Montr\'eal, Qu\'ebec, Canada H3A 0B9 
}

\maketitle

%\tableofcontents

\abstract{\noindent 
We study the question as to when the closed convex hull of a $K$-convex map equals its $K$-epigraph. In particular, we shed light onto the smallest cone $K$ such that  a given map has convex and closed $K$-epigraph,  respectively. We apply our findings to several examples in matrix space  as well as to convex composite functions.}

\bigskip

\noindent {\bf Keywords:} 
Cone-induced ordering, $K$-convexity, $K$-epigraph, convex hull,  Fenchel conjugate, horizon cone, spectral function,  convex-composite function, matrix analysis.
\bigskip

\noindent 
{\bf MSC2000 Subject Classification:} 52A41, 65K10, 90C25, 90C46

\section{Introduction}\label{sec:Intro}

\paragraph{Motivation} 
%\noindent
In  a recent paper,  Burke et al. \cite[Corollary 9]{BGH 17}  show that the closed convex hull  of the set $\cD:=\set{(X,\frac{1}{2}XX^T)}{X\in \R^{n\times m}}$ is given by 
\begin{equation*}\label{eq:Mot}
\clco\cD=\set{(X,Y)\in \R^{n\times m}\times \bS^n}{Y\succeq \frac{1}{2}XX^T}.
\end{equation*}
Here, `$\succeq$' is the {\em L\"owner  partial ordering} \cite{HoJ 91} on the symmetric matrices $\bS^n$ induced by the positive semidefinite cone $\bS^n_+$ via    `$A\succeq B$ if and only if $A-B\in \bS^n_+$'. 
 At second glance, the set $\cD\in \R^{n\times m}\times \bS^n$ is simply the graph of the matrix-valued map $F:X\in \R^{n\times m}\mapsto \frac{1}{2}XX^T\in \bS^n$;  and $\clco \cD$ in \eqref{eq:Mot} then  appears to be  a `generalized epigraph'  of $F$ where  the partial ordering on the image space $\bS^n$ (induced by $\bS^n_+$) plays the role of the ordinary ordering of $\R$ (induced by $\R_+$) for scalar-valued functions. 

More generally, given a map $F:\bE_1\to\bE_2$  between two (Euclidean) spaces $\bE_1$ and $\bE_2$ and a cone $K\subset \bE_2$, we  can order $\bE_2$ via `$y\geq_K z$ if and only if $y-z\in K$'.  In view of the above identity, the natural question that arises is the following: {\em When is}
\begin{equation}\label{eq:Q} 
%\text{\em When does}\;\;
\clco (\gph F)=\set{(x,y)}{y\geq_K F(x) }
%\; \text{\em hold?}
\end{equation}
{\em valid?}  Clearly, this can only hold if the set on the right, which will later be called the  {\em $K$-epigraph of $F$}, is itself closed and convex, in which case we  say that  $F$ is  {\em $K$-closed} and {\em $K$-convex},  respectively,  or {\em closed} and {\em convex, with respect  to (w.r.t.) $K$}, respectively.

\paragraph{Related work} The study of $K$-convexity has a long tradition in convex analysis and is  now part of many textbooks, e.g. \cite{BoV 04, RoW 98}:  Borwein \cite{Bor 74} 
pursued an ambitious program of extending
most of convex analysis to cone convex functions including conjugacy, subdifferential analysis,
and duality, laying out much of the groundwork.  Kusraev and Kutateladze  \cite{KK95} take this idea to an even more general setting by considering {\em convex operators} with values in arbitrary ordered vector spaces.  
Pennanen \cite{Pen 99}  develops  a deep  theory of generalized differentiation  for  {\em graph-convex mappings} (these are   called  {\em convex correspondences}  in \cite{KK95}) which contains some  results on $K$-convexity, highly relevant to our study. One of the most important features of  a $K$-convex map $F$ is the fact that the composition $g\circ F$  with a convex function $g$, which is increasing with respect to the ordering induced by $K$, is convex; a fact that has been well observed and utilized widely in the literature \cite{Bot 10,Bot et al. 06,BGW 07, BHN 21,HiH 06 , Pen 99}.

\paragraph{Road map and contributions} 
We start our study    in  Section \ref{sec:Prelim} with the necessary tools  from convex and variational analysis. 
In Section \ref{sec:Kcvx}, we formally introduce and expand on  the central notions of $K$-convexity and $K$-closedness. In particular, in Sections \ref{sec:0cvx}-\ref{sec:PC} we characterize the functions which are convex w.r.t.
a given subspace, half-space and polyhedral cone respectively. In Section \ref{sec:SmallCvx},  we elaborate on Pennanen's characterization of the dual cone of the  smallest closed, necessarily convex (Proposition \ref{prop:NecK}) cone with respect to which a given $F$ is  convex. We extend this in Section \ref{sec:SmallClosed} to study the smallest,  necessarily closed and convex  (Proposition \ref{prop:NecK}) cone with respect to which $F$ is convex {\em and} closed. Section \ref{sec:Main} is fully devoted to the 
question as to when \eqref{eq:Q} holds. Theorem \ref{t:rec_cnd} in Section \ref{sec:Char} provides a characterization for   \eqref{eq:Q}, which consitutes one of the main workhorses for the the rest of Section \ref{sec:Main}. Section \ref{sec:Nec} is mainly devoted to necessary conditions for \eqref{eq:Q}.  Partly  mimicking the scalar case (Theorem \ref{p:epi=conv-gph-1Dcase}),  in Section \ref{sec:Aff} we present necessary conditions  based on affine $K$-minorization and $K$-majorization. Section \ref{sec:Suff}, in turn,   provides sufficient conditions. Section \ref{sec:Ex} presents different examples of $K$-convex maps  by which we illustrate the theory developed in Section \ref{sec:Kcvx} and, more importantly, Section \ref{sec:Main}. In particular, we apply our findings to the following maps:
\begin{itemize}
\item $F:X\in \R^{n\times m}\mapsto \frac{1}{2}XX^T\in \bS^n$;
%\item $F:X\in\bS^n\mapsto X^2 \in \bS^n$\quad (squared matrix);
\item $F:X\in \bS^n_{++}\to X^{-1}\in \bS^n$\quad (inverse matrix);
\item $F:X\in \bS^n\mapsto \lambda(X)\footnotemark\in\R^n$\footnotetext{Here $\lambda(X)=(\lambda_1,\dots,\lambda_n)^T$ is the vector of eigenvalues of $X\in\bS^n$ in decreasing order.}\quad (spectral map);
\item $F:\bE\to \R^m$ where $F_i$ is convex for all $i=1,\dots,m$ \quad (component-wise convex).
\end{itemize}

\noindent
Most of the criteria worked out in the previous sections for the validity of \eqref{eq:Q} are brought to bear directly or indirectly here.

Section \ref{sec:gF} taps into the composite framework alluded to above,  where, primarily, we  study the following question: given a vector-valued map $F$ and  a (closed, proper) convex function $g$ such that $g\circ F$ is convex, does there exist a cone $K$ such that $F$ is $K$-convex and $g$ is increasing in a $K$-related ordering?

\medskip
\noindent
{\em Notation:} In what follows, $\bE$ denotes a Euclidean space, i.e. a finite-dimensional real inner product space with inner product denoted by $\ip{\cdot}{\cdot}$.    Given a set $S\subset\bE$, we denote its closure, convex hull,  closed convex hull,  and convex conical hull by $\cl S,\, \co S, \,\clco S$ and $\cone S$, respectively. For a vector $u\in\bE$, we denote its (convex) conical hull by $\R_{+}u$, and $\R_{++}u=\set{\lambda u}{\lambda>0}$. The indicator function   $\delta_S:\bE\to \rp$ of $S\subset \bE$  is given by $\delta_S(x)=0$ if $x\in S$ and $\delta_S(x)=+\infty$ otherwise.

\section{Preliminaries} \label{sec:Prelim}

Throughout we   make use of the {\em relative topology}  for  convex sets \cite[\S 6]{Roc 70}. The {\em relative interior}  $\ri C$ of a convex set $C\subset \bE$ is its interior in the subspace topology induced by its {\em affine hull} $\aff C:=\set{\lambda x+(1-\lambda)y}{\lambda \in \R,\; x,y\in C}$.  For a convex  set $C\subset\bE$ and (any) $x_0\in C$,   the {\em subspace parallel to} $C$ is $\para C:=\aff C  -x_0$. For convex sets,  we have a handy description of the affine hull. 
\begin{lemma}
\label{l:affine-hull-of-a-convex-set}
Let $C\subset \bE$ be a convex. Then 
$\aff C=\{\alpha x-\beta y \mid \alpha,\beta \geqslant 0, \alpha-\beta =1, x,y\in C\}.
$
\end{lemma}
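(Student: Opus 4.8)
The plan is to prove the two inclusions separately, denoting the right-hand side by $R:=\{\alpha x-\beta y \mid \alpha,\beta\geqslant 0,\ \alpha-\beta=1,\ x,y\in C\}$ and exploiting that the paper's definition of $\aff C$ already uses only two-point affine combinations $\lambda x+(1-\lambda)y$ with $\lambda\in\R$. Thus the statement really amounts to reconciling this parametrization with the one on the right, where the coefficient pair is constrained to the form $(\alpha,-\beta)$ with $\alpha\geqslant 0\geqslant -\beta$ and $\alpha-\beta=1$ (equivalently $\alpha\geqslant 1$).

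For the easy inclusion $R\subseteq\aff C$, I would simply note that the constraint $\alpha-\beta=1$ gives $-\beta=1-\alpha$, so any element $\alpha x-\beta y$ of $R$ equals $\alpha x+(1-\alpha)y$, an affine combination of two points of $C$, and hence lies in $\aff C$ by definition. This direction uses no convexity.

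For the reverse inclusion $\aff C\subseteq R$, I would take a generic $\lambda x+(1-\lambda)y$ with $\lambda\in\R$ and split into cases. When $\lambda\geqslant 1$, set $\alpha:=\lambda$ and $\beta:=\lambda-1\geqslant 0$, keeping $x,y$. When $\lambda\leqslant 0$, rewrite the combination as $(1-\lambda)y+\lambda x$ and apply the same device with the roles of $x$ and $y$ interchanged, which is legitimate because both range over $C$; here $\alpha:=1-\lambda\geqslant 1$ and $\beta:=-\lambda\geqslant 0$. The only remaining case is $0<\lambda<1$, and this is precisely where convexity enters: the point $z:=\lambda x+(1-\lambda)y$ is then a genuine convex combination, hence $z\in C$, so it can be written trivially as $1\cdot z-0\cdot z$, which has the required form with $\alpha=1,\ \beta=0$.

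I expect the only subtle point — and the reason the statement is not purely formal — to be this middle case. Without convexity the representation genuinely fails: for a two-point set $\{0,1\}\subset\R$ the set $R$ misses the open interval $(0,1)$, whereas $\aff C=\R$. So the proof must invoke convexity exactly once, to absorb all convex combinations into $C$ itself. The boundary values $\lambda\in\{0,1\}$ can be folded into either the outer or the middle case, and the empty set is handled vacuously.
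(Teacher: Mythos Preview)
Your proposal is correct and follows essentially the same approach as the paper: both directions are handled by the same case split on $\lambda$ (namely $\lambda\geqslant 1$, $\lambda\leqslant 0$, and $0<\lambda<1$), with convexity invoked precisely in the middle case to absorb the point back into $C$ and represent it as $1\cdot z-0\cdot z$. Your discussion of why convexity is indispensable (the two-point counterexample) is a nice addition that the paper omits.
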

\begin{proof} Set $A:=\{\alpha x-\beta y \mid \alpha,\beta \geqslant 0, \alpha-\beta =1, x,y\in C\}$. Thus  $\aff C\supset A$. 
Conversely, for  $z\in\aff C $,  there exist $\lambda\in\bR$ and $x,y\in C$ such that $z=\lambda x +(1-\lambda)y$.  If $\lambda\in (0,1)$, then $z\in C$   by convexity, and  hence $z=1\cdot z - 0\cdot z\in A$.  If  $\lambda >1$,  set $\alpha := \lambda\geqslant 0$, $\beta :=\lambda-1\geqslant 0$, and we get $\alpha-\beta = 1$ and $z=\alpha x - \beta y\in A$.  Finally, if $\lambda<0$, then $1-\lambda\geqslant 0$, and thus ($\beta:=\lambda, \alpha:=1-\lambda$) $z\in A$.
\end{proof}

\noindent
Let $f:\bE\to \rbar:=\R\cup\{\pm \infty\}$. We call $f$ {\em proper} if its {\em domain} $\dom f:=\set{x\in\bE}{f(x)<+\infty}$ is nonempty and $f$ doesn't take the value $-\infty$. We say that $f$ is {\em convex} if its {\em epigraph}  $\epi f:=\set{(x,\alpha)\in \bE\times \R}{f(x)<\alpha}$ is convex which  coincides with the usual definition via a  secant condition 
\[
f(\alpha x+(1-\alpha)y)\leq \alpha f(x)+(1-\alpha)f(y)\quad \forall x,y\in \dom f,\; \alpha\in (0,1),
\]
if $f$ does not take the value $-\infty$.
Although  pointwise convergence is not a suitable for preservation of many variational properties, see e.g. \cite[Chapter 7]{RoW 98}, it still preserves convexity in the limit. 

\begin{lemma}\label{lem:Ptw} Let $\{f_k:\bE\to\rp\}$ converge pointwise to $f:\bE\to\rbar$, i.e. $f_k(x)\to f(x)$ for all $x\in\bE$. If $f_k$ is convex for all $k\in\bE$ (sufficiently large), then so is $f$. 
\end{lemma}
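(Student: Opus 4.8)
The plan is to verify convexity of $f$ straight from the definition, i.e.\ to show that $\epi f$ is convex, by transferring the convexity of the epigraphs $\epi f_k$ to the limit through pointwise convergence. Fix two points $(x_1,\alpha_1),(x_2,\alpha_2)\in\epi f$ and a weight $\lambda\in(0,1)$; the goal is to show that their convex combination $(x_\lambda,\alpha_\lambda):=(\lambda x_1+(1-\lambda)x_2,\,\lambda\alpha_1+(1-\lambda)\alpha_2)$ again lies in $\epi f$, that is, $f(x_\lambda)<\alpha_\lambda$.

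First I would use the pointwise convergence $f_k(x_i)\to f(x_i)$ together with the defining inequalities $f(x_i)<\alpha_i$ ($i=1,2$) to push membership onto the approximating functions. Choosing $\varepsilon>0$ small enough that $f(x_i)<\alpha_i-\varepsilon$ for $i=1,2$ (possible since each $\alpha_i$ is finite, and trivially so when $f(x_i)=-\infty$), pointwise convergence yields $f_k(x_i)<\alpha_i-\varepsilon$ for all sufficiently large $k$, whence $(x_i,\alpha_i-\varepsilon)\in\epi f_k$. This is the step where I lean on $f_k$ taking values in $\rp$, so that no $-\infty$ can occur on the approximating side.

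Next, invoking the convexity of $\epi f_k$ for those large $k$, the convex combination $(x_\lambda,\alpha_\lambda-\varepsilon)$ lies in $\epi f_k$, i.e.\ $f_k(x_\lambda)<\alpha_\lambda-\varepsilon$. Passing to the limit $k\to\infty$ and using $f_k(x_\lambda)\to f(x_\lambda)$ gives $f(x_\lambda)\le\alpha_\lambda-\varepsilon<\alpha_\lambda$, which is precisely the inequality certifying $(x_\lambda,\alpha_\lambda)\in\epi f$. Since the two points were arbitrary, $\epi f$ is convex, so $f$ is convex.

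The delicate point — and the one I would flag as the \emph{main obstacle} — is the interplay between the (strict) inequality in the epigraph and the fact that strict inequalities are not, in general, preserved under limits, compounded by the possibility that $f$ attains the value $-\infty$ (which none of the $f_k$ can). The $\varepsilon$-perturbation above is designed exactly to absorb the loss of strictness in the limit, while the sign condition $\lambda,1-\lambda>0$ ensures the scalings $\lambda f_k(x_i)\to\lambda f(x_i)$ behave correctly in $\rbar$. Working with finite ordinates $\alpha_1,\alpha_2$ also sidesteps any $(+\infty)+(-\infty)$ ambiguity that a naive secant-inequality argument would encounter when $f(x_1)$ and $f(x_2)$ carry opposite infinite signs.
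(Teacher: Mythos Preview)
Your proof is correct and follows essentially the same idea as the paper's: pass the convexity inequality to the limit. The paper does this in one line with the secant condition $\alpha f_k(x)+(1-\alpha)f_k(y)\geq f_k(\alpha x+(1-\alpha)y)$, while you work at the epigraph level with an $\varepsilon$-cushion; your version is therefore more careful about the possibility that $f$ attains $-\infty$ (a case the paper's secant-based argument glosses over, since its own definition of convexity via the secant inequality is stated only for functions not taking $-\infty$), but the underlying mechanism is identical.
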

\begin{proof} For  $\alpha\in(0,1)$ and $x,y\in\bE$, the convexity of $f_k$  yields
$
\alpha f_k(x)+(1-\alpha)f_k(y) \geqslant f_k(\alpha x+(1-\alpha)y).
$
Passing to the   limit  $k\to\infty$ on  both sides  gives
$
\alpha f+(1-\alpha)f(y) \geqslant f(\alpha x+(1-\alpha)y).
$
%Thus, $f$ is convex.
\end{proof}

\noindent
We call  $f:\bE\to\rp$  {\em closed} or {\em lower semicontinuous (lsc)} if $\epi f$  is closed.  
We set
\begin{eqnarray*}
\Gamma(\bE)&:=& \set{f:\bE\to\rp}{f\; \text{proper, convex}},\\
\Gamma_0(\bE) &:=& \set{f\in\Gamma(\bE)}{f\;\text{closed}}.
\end{eqnarray*}
For $f\in \Gamma(\bE)$, its {\em closure}  $\cl f\in\Gamma_0(\bE)$ is defined via $\cl (\epi f)=\epi (\cl f).$
More generally, given a convex  subset $D\subset \bE$, we call $f$ {\em $D$-closed} if $\epi f$ is closed in the subspace  topology induced by  $D\times \R$.
We note that $D\times \R$ is a metric space, hence closedness is sequential closedness and, in particular,  $f$ is $D$-closed if and only if 
\[
 \liminf_{k\to\infty} f(x_k)\geq f(\bar x)\quad \forall \{x_k\in D\}\to \bar x\in D.
\]
We define
$
\Gamma_0(D):=\set{f\in\Gamma(\bE)}{f \;\text{$D$-closed}}.
$ 
%We will also use the following abuse of notation:
%\[
%f\in\Gamma_0(\dom f) \quad :\Longleftrightarrow \quad f\in\Gamma(\bE), \;f \; \text{is $(\dom f)$-closed}.
%\]
\begin{lemma}
\label{l:f(x)=clf(x)-iff-f-in-Gamma_0(D)}
Let $f\in\Gamma(\bE)$. Then  the following are equivalent: 
\begin{itemize}
\item[i)]  $f\in\Gamma_0(\dom f)$;
\item[ii)] $f(x)=(\cl f)(x)$ for all $x\in \dom f$.
\end{itemize}
\end{lemma}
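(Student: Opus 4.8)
The plan is to establish the two implications separately, relying on three standard facts about the lower semicontinuous hull $\cl f$ of a proper convex function (cf. \cite{Roc 70}): (a) the pointwise bound $(\cl f)(x)\leq f(x)$ for all $x\in\bE$; (b) lower semicontinuity of $\cl f$ on all of $\bE$; and (c) the variational formula $(\cl f)(x)=\liminf_{x'\to x}f(x')$, the infimal value being attained along some sequence $x'\to x$. Throughout I would use the sequential characterization of $\dom f$-closedness recorded just above the statement: condition (i) is equivalent to the requirement that $\liminf_k f(x_k)\geq f(\bar x)$ for every sequence $\{x_k\in\dom f\}$ with $x_k\to\bar x\in\dom f$.

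For the implication (ii) $\Rightarrow$ (i), I would fix a sequence $\{x_k\in\dom f\}$ converging to some $\bar x\in\dom f$ and chain three inequalities. Monotonicity of the liminf applied to (a) gives $\liminf_k(\cl f)(x_k)\leq\liminf_k f(x_k)$; lower semicontinuity (b) gives $(\cl f)(\bar x)\leq\liminf_k(\cl f)(x_k)$; and hypothesis (ii) gives $f(\bar x)=(\cl f)(\bar x)$ since $\bar x\in\dom f$. Concatenating these yields $f(\bar x)\leq\liminf_k f(x_k)$, which is exactly the sequential form of (i).

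For the reverse implication (i) $\Rightarrow$ (ii), I would fix $x\in\dom f$; by (a) it suffices to prove $(\cl f)(x)\geq f(x)$. Using (c), I would choose a recovery sequence $x_k\to x$ with $f(x_k)\to(\cl f)(x)$. The crucial point is that this sequence lies in $\dom f$: since $\cl f$ is proper (being the closure of a proper convex function) and $x\in\dom f$, we have $-\infty<(\cl f)(x)\leq f(x)<+\infty$, so the limit is finite and hence $f(x_k)<+\infty$ for all large $k$, i.e. $x_k\in\dom f$. Discarding finitely many terms, I may then apply the sequential characterization (i) to $\{x_k\in\dom f\}\to x\in\dom f$, obtaining $(\cl f)(x)=\lim_k f(x_k)=\liminf_k f(x_k)\geq f(x)$, as required.

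I expect the only delicate point to be the claim that the recovery sequence furnished by (c) can be taken inside $\dom f$, since this is precisely what makes hypothesis (i)---a statement about sequences in $\dom f$ only---applicable. This hinges on $(\cl f)(x)$ being a real number for $x\in\dom f$: finiteness from above is immediate from (a), while finiteness from below uses properness of $\cl f$. Alternatively, one can avoid invoking properness of $\cl f$ directly and argue that if $(\cl f)(x)=-\infty$ then a recovery sequence in $\dom f$ with $f(x_k)\to-\infty$ would contradict (i). Everything else is routine manipulation of liminf and the definitions, so I anticipate no further obstacles.
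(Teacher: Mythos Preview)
Your proof is correct, but it proceeds along a different line from the paper. The paper argues purely set-theoretically: it uses that a subset of the subspace $\dom f\times\R$ is closed in the subspace topology if and only if it equals the intersection of its ambient closure with that subspace, and then observes the chain
\[
\epi f=\cl(\epi f)\cap(\dom f\times\R)=\epi(\cl f)\cap(\dom f\times\R)\ \Longleftrightarrow\ f(x)=\cl f(x)\ \text{for all }x\in\dom f,
\]
which handles both directions at once in a couple of lines. Your argument instead works sequentially, combining the characterization of $\dom f$-closedness via liminfs with the variational formula $(\cl f)(x)=\liminf_{x'\to x}f(x')$ and a recovery sequence. The paper's route is shorter and stays at the level of epigraphs; your route is more hands-on and makes explicit the one nontrivial point (that the recovery sequence can be forced into $\dom f$ because $(\cl f)(x)$ is finite when $x\in\dom f$), which the paper's argument absorbs silently into the ``elementary considerations'' of the last equivalence. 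Both rely, implicitly or explicitly, on properness of $\cl f$.
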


\begin{proof} Since $f\in\Gamma_0(\bE)$, we have 
\begin{eqnarray*}
f\in\Gamma_0(\dom f) & \Longleftrightarrow &\epi f  \mbox{ is closed in } \dom f\times\bR \\
& \Longleftrightarrow& \epi f = \cl(\epi f) \cap \dom f\times \bR \\
& \Longleftrightarrow &\epi f = \epi(\cl f) \cap \dom f\times \bR \\
& \Longleftrightarrow & f(x)=\cl f(x)\quad \forall x\in\dom f.
\end{eqnarray*}
Here the first equivalence is simply the definition of $\Gamma_0(\dom f)$. The second is due to the fact that the closed sets in the $\dom f\times \R$ subspace topology are exactly the intersections of closed sets (in $\bE\times \R$) with $\dom f\times \R$. The third one is clear as $\epi (\cl f)=\cl (\epi f)$, and the fourth one follows from elementary considerations.
\end{proof}

\begin{remark}\label{rem:CloCont}  We point out that $f\in \Gamma_0(\bE)$ implies that $f\in\Gamma_0(\dom f)$,  since the closed set  $\epi f\subset \bE\times \R$ intersected with $\dom f\times \R$ is (trivially) closed in the subspace topology induced by $\dom f\times \R$.  However, the converse statement is not true. Consider for instance $\delta_{(0,1)}\in\Gamma_0((0,1))\setminus\Gamma_0(\bR)$, as $(0,1)\times\bR_+$ is a closed set in the topology induced by $(0,1)\times\bR$, but is not a closed set in $\bR\times\bR$.
\end{remark}

\noindent
 A nonempty subset  $K \subset \bE$ is called a {\em cone} if $\lambda x\in K$ for all $\lambda\geq 0$ and $x\in K$. If the latter only holds for all $\lambda>0$, we call $K$ a {\em pre-cone}.  For instance  if $K$ is a cone, then $\ri K$ is a pre-cone, use e.g. \cite[Corollary 6.6.1]{Roc 70}. Combining this with the  {\em line segment principle} \cite[Theorem 6.1]{Roc 70} and \cite[Theorem 6.3]{Roc 70}, we find the following result.
 
\begin{lemma}
\label{l:ri-of-convex-cone}
Let $K\subset\bE$ be a convex cone. Then $\cl K+\ri K\subset \ri K$.
\end{lemma}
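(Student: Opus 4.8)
The plan is to reduce the set inclusion to a single pointwise statement: fix arbitrary $x\in\cl K$ and $y\in\ri K$, and show that $x+y\in\ri K$. Everything then rests on the line segment principle \cite[Theorem 6.1]{Roc 70}, which asserts that for a convex set $C$, a point $u\in\ri C$ and a point $v\in\cl C$, one has $(1-\lambda)u+\lambda v\in\ri C$ for every $\lambda\in[0,1)$. The idea is to realize the target $x+y$ as the \emph{midpoint} of such a half-open segment, with one endpoint in $\ri K$ and the other in $\cl K$.

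First I would record two structural facts that manufacture the required endpoints. Since $K$ is a convex cone, its closure $\cl K$ is again a convex cone (the cone property passes to the closure by scaling a convergent sequence), so in particular $2x\in\cl K$. Second, as noted in the text preceding the statement, $\ri K$ is a pre-cone by \cite[Corollary 6.6.1]{Roc 70}, and since the scaling factor $2$ is strictly positive we get $2y\in\ri K$. With these in hand, I would apply the line segment principle to $C=K$ with $u=2y\in\ri K$, $v=2x\in\cl K$ and $\lambda=\tfrac12\in[0,1)$, obtaining $(1-\tfrac12)(2y)+\tfrac12(2x)=x+y\in\ri K$. As $x\in\cl K$ and $y\in\ri K$ were arbitrary, this is exactly $\cl K+\ri K\subset\ri K$. (If one prefers to avoid the pre-cone property, the same conclusion follows by writing $x+y=(1-t)\tfrac{y}{1-t}+t\tfrac{x}{t}$ for any fixed $t\in(0,1)$ and invoking the principle with coefficient $t$ on the closure point.)

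I do not anticipate a genuine obstacle: the only point demanding care is the bookkeeping of which endpoint sits in $\ri K$ and which in $\cl K$, together with the requirement that the convex-combination coefficient on the closure point stay strictly below $1$, so that the principle yields membership in the relative interior rather than merely in the closure. The supplementary citation \cite[Theorem 6.3]{Roc 70} enters only to keep relative interiors consistent when passing between $K$ and $\cl K$, via $\ri(\cl K)=\ri K$, but it is not essential to the midpoint argument itself.
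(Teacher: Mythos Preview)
Your argument is correct and matches the paper's own approach: the paper does not spell out a proof but simply points to the pre-cone property of $\ri K$ \cite[Corollary 6.6.1]{Roc 70} combined with the line segment principle \cite[Theorem 6.1]{Roc 70} and \cite[Theorem 6.3]{Roc 70}, which is precisely what you carry out in detail via the midpoint trick.
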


 \noindent
 The {\em  polar cone} of a (pre-)cone $K$ is given by $K^\circ := \{ v\in\bE_2 \mid \forall u\in K \colon \langle u,v\rangle \leqslant 0\},$ and $-K^\circ$ is referred to as the {\em dual cone}.
Recall that $\clco K=(K^\circ)^\circ=:K^{\circ\circ}$  by the bipolar theorem \cite[Corollary 6.21]{RoW 98}, and that the polarity operation is order reversing.  
The  {\em horizon cone} of $C\subset \bE$ is given by  
$
C^\infty := \{ u\in\bE \mid \exists \{t_k\}\downarrow 0, \{x_k\in C\} :\; \lim_{k\to\infty} t_kx_k = u\} .
$
If $C$ is a nonempty  convex set, then $C+C^\infty= \cl C$ and for a cone $K$, we have $K^\infty=\cl K$.

A cone $K\subset \bE$  induces and ordering on $\bE$ via
\begin{equation*}
y\geq_K x :\Longleftrightarrow y-x \in K \quad \forall x,y\in\bE.
\label{eq:general-inequality}
\end{equation*}

\begin{lemma}
\label{lem:l1}
If $K$ is a closed and convex cone of $\bE_2$, then
\[
y\geq _K x\quad\Longleftrightarrow\quad\ip{u}{x}\geqslant\ip{u}{y}\quad \forall u\in -K^\circ.
\]
\end{lemma}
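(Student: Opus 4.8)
The plan is to reduce the asserted equivalence to a single invocation of the bipolar theorem. Writing $d:=y-x$, the left-hand side $y\geq_K x$ is by definition the membership $d\in K$. For the right-hand side, I would run over all $u\in -K^\circ$ and perform the substitution $u=-v$ with $v$ ranging over $K^\circ$; after rearranging the scalar inequality this says precisely that $\ip{v}{d}\leqslant 0$ for every $v\in K^\circ$, i.e. that $d\in (K^\circ)^\circ=K^{\circ\circ}$. Hence the whole statement collapses to the set identity $d\in K \Leftrightarrow d\in K^{\circ\circ}$, that is, to $K=K^{\circ\circ}$.

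With this reduction in hand, the forward implication is the trivial inclusion $K\subset K^{\circ\circ}$: any $d\in K$ satisfies $\ip{v}{d}\leqslant 0$ for all $v\in K^\circ$ by the very definition of the polar, so no hypothesis on $K$ is needed here. All of the content sits in the reverse inclusion $K^{\circ\circ}\subset K$, which is exactly where closedness and convexity of $K$ are used: by the bipolar theorem recalled in the preliminaries one has $K^{\circ\circ}=\clco K$, and since $K$ is assumed closed and convex this equals $K$.

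I expect the only genuinely delicate point to be the sign bookkeeping in the first paragraph. Because the dual cone $-K^\circ$ is the set on which $\ip{u}{\cdot}\geqslant 0$, translating the right-hand inequalities back into a condition phrased through the polar $K^\circ$ (and thence through $K^{\circ\circ}$) must be carried out attentively, and it is worth double-checking the orientation against a one-dimensional test case such as $K=\R_+$. Once the right-hand side is correctly matched with membership in $K^{\circ\circ}$, no estimates remain and the equivalence follows immediately from the bipolar theorem.
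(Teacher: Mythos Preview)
Your approach is exactly the paper's: reduce the equivalence to the set identity $K=K^{\circ\circ}$ and invoke the bipolar theorem. The paper's proof is a one-liner doing precisely this.

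There is, however, a sign slip in your first paragraph. With $d=y-x$ and the substitution $u=-v$, $v\in K^\circ$, the inequality $\ip{u}{x}\geqslant\ip{u}{y}$ becomes $\ip{-v}{x-y}\geqslant 0$, i.e.\ $\ip{v}{d}\geqslant 0$, not $\ip{v}{d}\leqslant 0$. So the right-hand side as printed is actually $d\in -K^{\circ\circ}$, not $d\in K^{\circ\circ}$. Your own suggested sanity check with $K=\R_+$ catches this immediately: then $-K^\circ=\R_+$, and the right-hand side reads $ux\geqslant uy$ for all $u\geqslant 0$, i.e.\ $x\geqslant y$, while the left-hand side $y\geq_K x$ reads $y\geqslant x$. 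The displayed statement thus carries a sign typo; the paper's own proof silently repairs it by writing $x\geqslant_K y$ on the left and then proceeds exactly along your outline. Once you align with that correction, your argument is complete and matches the paper's.
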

\begin{proof}
By the bipolar theorem \cite[Corollary~6.21]{RoW 98}, we have  $K^{\circ\circ}=K$ and hence
\[
x\geqslant_Ky\;\Longleftrightarrow\; x-y\in K=K^{\circ\circ}\;\Longleftrightarrow\;\ip{u}{x-y}\geqslant 0\quad \forall u\in -K^\circ.
\]
\end{proof}

\noindent
A cone $K\subset\bE$  is said to be {\em pointed} if $K\cap(-K)=\{0\}$. Such a cone induces a partial ordering. 

\begin{lemma}[Ordering induced by a pointed cone]\label{lem:Pointed}  Let $K\subset \bE$ be pointed. Then 
\[
x=y \iff x\geq_K y \AND y\geq_K x.
\]
\end{lemma}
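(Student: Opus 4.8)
The plan is to prove both implications directly from the definitions of the cone-induced ordering and of pointedness, since no deeper machinery is needed here. First I would unravel what each side means: by definition of the ordering, $x\geq_K y$ is the membership $x-y\in K$, and $y\geq_K x$ is $y-x\in K$. The whole proof then reduces to manipulating these two memberships.

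For the forward implication $\Rightarrow$, I would start from $x=y$, so that $x-y=0$. The only thing requiring a word is that $0\in K$: since $K$ is by definition a \emph{nonempty} cone, picking any $w\in K$ and scaling by $\lambda=0$ gives $0=0\cdot w\in K$. Hence $x-y=0\in K$ and $y-x=0\in K$ simultaneously, which is precisely $x\geq_K y$ and $y\geq_K x$.

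For the converse $\Leftarrow$, I would assume $x\geq_K y\AND y\geq_K x$, i.e. $x-y\in K$ and $y-x\in K$. The key observation is the definitional identity $y-x=-(x-y)$, so the second membership reads $x-y\in -K$. Combining the two gives $x-y\in K\cap(-K)$, and invoking pointedness, $K\cap(-K)=\{0\}$, forces $x-y=0$, that is $x=y$.

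I do not anticipate any genuine obstacle here: the entire content is the sign identity $y-x=-(x-y)$ together with the fact that a (nonempty) cone contains the origin. If any step deserves to be stated rather than assumed, it is that latter point about $0\in K$, which is exactly what makes the forward direction valid; pointedness is used only, and decisively, in the converse.
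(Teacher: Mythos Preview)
Your proof is correct and is exactly the straightforward argument the paper has in mind (the paper's own proof reads simply ``Straightforward''). You unpack both implications via the definitions of $\geq_K$ and pointedness, noting $0\in K$ for the forward direction and $K\cap(-K)=\{0\}$ for the converse, which is precisely what is required.
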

\begin{proof} Straightforward.
\end{proof}

\section{\boldmath{$K$}-convexity and \boldmath{$K$}-closedness}\label{sec:Kcvx}

\noindent
We commence this section with the central definitions of this paper.

\begin{definition}[$K$-epigraphs, $K$-convexity and $K$-closedness]
\label{d:Kconvexity-Kepigraph-K-closed} 
Let $K\subset \bE_2$ be a cone   and let  $F\colon D\subset\bE_1\to\bE_2$. Then the  {\em $K$-epigraph} of $F$  is  given by
\begin{equation}
\Epi{K}{F} = \set{ (x,y)\in D\times\bE_2 }{ F(x)\leq_K y}\subset \bE_1\times \bE_2.
\label{eq:Kepigraph}
\end{equation}
We say that $F$ is
\begin{itemize}
\item[i)]  {\em proper} if $\Epi{K}{F}\neq \emptyset$ (i.e. $D\neq\emptyset$);
\item[ii)] {\em $K$-convex}  if $\Epi{K}{F}$ is convex;
\item[iii)] {\em $K$-closed} if   $\Epi{K}{F}$ is closed.
\end{itemize}

\noindent

\end{definition}
\noindent
For $D\subset \bE_1$ convex and $K\subset\bE_2$ a cone,  we point out that  $F:D\to \bE_2$ is $K$-convex if and only if $K$ is convex and
\[
\alpha F(x) + (1-\alpha) F(y) \geqslant_K F(\alpha x + (1-\alpha)y) \quad \forall x,y\in D,\alpha\in(0,1) .
\]
Moreover, we always have
\begin{equation}\label{eq:Kepigraph-1}
\Epi{K}{F} = \gph F + \lbrace 0 \rbrace \times K.
\end{equation}
This has, in particular,  the following  immediate consequence.

\begin{lemma}
\label{l:inculsion-Kepigraphs}
Let $F\colon D\subset\bE_1\to\bE_2$ be proper, and $K_1\varsubsetneq K_2\subset \bE_2$ be  cones. Then $\Epi{K_1}{F}\varsubsetneq\Epi{K_2}{F}$. In particular, there is at most one cone $K\subset\bE_2$ such that $\Epi{K}{F}=\clco(\gph F)$.
\end{lemma}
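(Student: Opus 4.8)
The statement has two parts. The plan is to prove the inclusion $\Epi{K_1}{F}\varsubsetneq\Epi{K_2}{F}$ first, and then derive the uniqueness assertion as a consequence.

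For the first part, I would exploit the decomposition \eqref{eq:Kepigraph-1}, namely $\Epi{K}{F}=\gph F+\{0\}\times K$. Since $K_1\subset K_2$, adding the (identical) set $\gph F$ to $\{0\}\times K_1\subset\{0\}\times K_2$ immediately gives the inclusion $\Epi{K_1}{F}\subset\Epi{K_2}{F}$. The work is to show that this inclusion is \emph{strict}. Here I would use that $K_1\varsubsetneq K_2$, so there exists $w\in K_2\setminus K_1$. Picking any $x_0\in D$ (possible since $F$ is proper, so $D\neq\emptyset$), I would consider the point $(x_0,F(x_0)+w)$. By construction $F(x_0)\leq_{K_2}F(x_0)+w$, so this point lies in $\Epi{K_2}{F}$. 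The claim is that it is \emph{not} in $\Epi{K_1}{F}$: indeed, $(x_0,F(x_0)+w)\in\Epi{K_1}{F}$ would force $(F(x_0)+w)-F(x_0)=w\in K_1$ by the very definition of the $K_1$-epigraph in \eqref{eq:Kepigraph}, contradicting $w\notin K_1$. This witnesses strictness.

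For the uniqueness part (``at most one cone $K$ with $\Epi{K}{F}=\clco(\gph F)$''), I would argue by contradiction using the strict monotonicity just established. Suppose two distinct cones $K_1\neq K_2$ both satisfy $\Epi{K_i}{F}=\clco(\gph F)$. Since they are distinct, at least one fails to contain the other; without loss of generality I may assume $K_1\not\subset K_2$ (otherwise swap the roles). Then $K_1\cap K_2\varsubsetneq K_1$, and applying the strict inclusion of the first part to the pair $K_1\cap K_2\varsubsetneq K_1$ would be tempting, but it is cleaner to observe directly that if $K_1\neq K_2$ then the two $K$-epigraphs cannot coincide: take $w\in K_1\setminus K_2$ (or $w\in K_2\setminus K_1$) and the point $(x_0,F(x_0)+w)$ lies in exactly one of the two epigraphs by the witness argument above, so $\Epi{K_1}{F}\neq\Epi{K_2}{F}$, whence they cannot both equal the common set $\clco(\gph F)$.

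The main obstacle, such as it is, is bookkeeping rather than conceptual: I must be careful that the decomposition \eqref{eq:Kepigraph-1} and the membership characterization \eqref{eq:Kepigraph} are applied to the \emph{same} fixed base point $x_0\in D$, and that the witness $w$ is chosen in the correct set difference so that the separating point falls on the right side. No convexity or closedness of the cones is needed here; the argument rests purely on the additive structure of the $K$-epigraph and the definition of the induced order, so it goes through for arbitrary (possibly non-convex, non-closed) cones.
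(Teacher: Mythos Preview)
Your proposal is correct and follows exactly the route the paper intends: the lemma is stated as an ``immediate consequence'' of the decomposition \eqref{eq:Kepigraph-1}, and your witness argument with $w\in K_2\setminus K_1$ and $x_0\in D$ is precisely how one unpacks that. The uniqueness part is likewise the intended straightforward consequence (your detour through $K_1\cap K_2$ is unnecessary, as you yourself note; the direct observation that $K_1\neq K_2$ forces $\Epi{K_1}{F}\neq\Epi{K_2}{F}$ via the same witness suffices).
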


\noindent
In the  convex case we can extract the following.

\begin{lemma}\label{lem:CC} Let $F\colon D\subset\bE_1\to\bE_2$ be proper, and let $K_1\subset K_2\subset \bE_2$ be  convex cones.  Then
$\Epi{K_2}{F}=\Epi{K_1}{F}+\{0\}\times K_2$. In particular, if $F$ is $K_1$-convex, then $F$ is $K_2$-convex.
\end{lemma}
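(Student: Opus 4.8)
The plan is to establish the set identity $\Epi{K_2}{F}=\Epi{K_1}{F}+\{0\}\times K_2$ first, since the two ``in particular'' consequences follow almost immediately from it. My starting point is the representation \eqref{eq:Kepigraph-1}, which gives $\Epi{K_i}{F}=\gph F+\{0\}\times K_i$ for each $i$. The key algebraic fact I would exploit is that $K_2$ is a convex cone containing $K_1$, so $K_1+K_2=K_2$; indeed, $K_2\subset K_1+K_2$ because $0\in K_1$, and $K_1+K_2\subset K_2+K_2=K_2$ by convexity of the cone $K_2$ (a convex cone is closed under addition). With this in hand, I would simply compute
\[
\Epi{K_1}{F}+\{0\}\times K_2=\gph F+\{0\}\times K_1+\{0\}\times K_2=\gph F+\{0\}\times(K_1+K_2)=\gph F+\{0\}\times K_2=\Epi{K_2}{F},
\]
using associativity and commutativity of Minkowski sum throughout.

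For the ``in particular'' claim, suppose $F$ is $K_1$-convex, meaning $\Epi{K_1}{F}$ is convex. I would then invoke the identity just proved: $\Epi{K_2}{F}$ is the Minkowski sum of the convex set $\Epi{K_1}{F}$ with the convex set $\{0\}\times K_2$ (a convex cone, hence convex). Since the Minkowski sum of two convex sets is convex, $\Epi{K_2}{F}$ is convex, i.e.\ $F$ is $K_2$-convex. I should double-check that $K_2$ convex is indeed available as a hypothesis — it is, as both $K_1$ and $K_2$ are assumed to be convex cones in the statement — and that $\{0\}\times K_2\subset\bE_1\times\bE_2$ is the correct ambient product.

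I do not anticipate a genuine obstacle here: the result is essentially a one-line consequence of \eqref{eq:Kepigraph-1} together with the semigroup property $K_1+K_2=K_2$ of a convex cone under containment. The only point requiring a moment's care is the justification that $K_1+K_2=K_2$, which rests specifically on $K_2$ being \emph{convex} (a general cone need not be closed under addition), and on $K_1\subset K_2$ together with $0\in K_1$. Everything else is formal manipulation of Minkowski sums, and the preservation of convexity under Minkowski addition is standard. I would present the proof compactly, stating the displayed chain of equalities and then the one-sentence convexity argument.
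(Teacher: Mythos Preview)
Your proof is correct and follows precisely the paper's approach: invoke \eqref{eq:Kepigraph-1} and the identity $K_1+K_2=K_2$ for convex cones $K_1\subset K_2$. You supply more detail than the paper (which compresses the argument to one line), but the substance is identical.
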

\begin{proof} This is due to \eqref{eq:Kepigraph-1} combined with the fact that $ K_1+K_2=K_2$ because $K_1$ and $K_2$  are  convex cones.
\end{proof}

\noindent
Given a cone $K\subset \bE$ and its induced ordering, we attach to $\bE$ a  formal largest element $+\infty_\bullet$ with respect to that ordering, and  set $\bE^\bullet:=\bE\cup \{+\infty_\bullet\}$.  
For  $G\colon\bE_1\to\bE_2^\bullet$  its  \textit{domain} is $\dom G:=\lbrace x\in\bE_1 : G(x)\in\bE_2\rbrace$. The {\em graph } of $G$ is  given by
$
\gph G : = \{ (x,F(x)) \mid x\in\dom F\} .
$
We  adopt the notions in Definition \ref{d:Kconvexity-Kepigraph-K-closed} via the restriction $F:=G_{|\dom G}$. 
We record in the next result that   $K$-closedness and $K$-convexity requires certain conditions about the underlying cone $K$.

\begin{proposition}\label{prop:NecK} Let  $K\subset\bE_2$ be a cone, and  let $F\colon\bE_1\to\bE_2^\bullet$ be proper. Then the following hold:
\begin{itemize}
\item[a)] If $F$ is $K$-closed, then $K$ is closed.
\item[b)] If $F$ is $K$-convex, then $K$ is convex.
\end{itemize}
\end{proposition}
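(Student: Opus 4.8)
The plan is to reduce both parts to a single elementary observation: fixing any point of $\dom F$, the corresponding slice of the $K$-epigraph is precisely a translate of $K$, so any property of $\Epi{K}{F}$ that is inherited by such slices passes to $K$ itself.

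First I would invoke properness to fix some $\bx\in\dom F$, which is possible since $\dom F\neq\emptyset$. Directly from the definition \eqref{eq:Kepigraph}, the slice over $\bx$ is
\[
\set{y\in\bE_2}{(\bx,y)\in\Epi{K}{F}}=\set{y\in\bE_2}{y\geq_K F(\bx)}=F(\bx)+K,
\]
using $y\geq_K F(\bx)\Leftrightarrow y-F(\bx)\in K$. Consequently $K=\bigl(F(\bx)+K\bigr)-F(\bx)$ is a translate of this slice, and it remains only to transfer the relevant structural property from $\Epi{K}{F}$ to $K$.

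For part b), I would use that a slice of a convex set over a fixed first coordinate is convex: if $(\bx,y_1),(\bx,y_2)\in\Epi{K}{F}$ and $\alpha\in(0,1)$, then convexity of $\Epi{K}{F}$ gives $(\bx,\alpha y_1+(1-\alpha)y_2)=\alpha(\bx,y_1)+(1-\alpha)(\bx,y_2)\in\Epi{K}{F}$, so $\alpha y_1+(1-\alpha)y_2$ again lies in the slice. Hence $F(\bx)+K$ is convex, and since translation preserves convexity, so is $K$. For part a), I would instead observe that the slice is the preimage of the closed set $\Epi{K}{F}$ under the continuous map $y\mapsto(\bx,y)$ (equivalently, its intersection with the closed affine set $\{\bx\}\times\bE_2$, read off in the second coordinate), hence closed; translating by $-F(\bx)$ then shows that $K$ is closed.

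I expect no genuine obstacle here. The only points requiring care are the identification of the slice with $F(\bx)+K$ and the elementary facts that slices inherit convexity and closedness; and it is precisely properness that guarantees the existence of a base point $\bx$ over which to take the slice.
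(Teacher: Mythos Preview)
Your proposal is correct and takes essentially the same approach as the paper: fix $\bx\in\dom F$ and use that $(\bx,y)\in\Epi{K}{F}$ iff $y\in F(\bx)+K$ to transfer closedness and convexity from $\Epi{K}{F}$ to $K$. The paper phrases this pointwise (taking a sequence $\{y_k\in K\}\to y$ for a) and two points $y_1,y_2\in K$ for b)), whereas you package it as ``the slice is a translate of $K$'', but the content is identical.
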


\begin{proof} a)  Let $\{y_k\in K\}\to y$ and pick $x\in\dom F$. Then $(x,F(x)+y_n)\in\Epi{K}{F}$  for all $k\in \bN$  and $(x,F(x)+y_k)\to(x,F(x)+y)\in \Epi{K}{F}$ as $\Epi{K}{F}$ is  closed. Thus, $y\in F(x)+y-F(x)=y\in K$.
\smallskip

\noindent
b)  Let $y_1,y_2\in K, \alpha\in (0,1)$. For $x\in \dom F$ we  hence find $(x,F(x)+y_1)\in\Epi{K}{F}$, and $(x,F(x)+y_2)\in\Epi{K}{F}$. As $\Epi{K}{F}$ is a convex, we have $(x,F(x)+\alpha y_1+(1-\alpha)y_2)\in\Epi{K}{F}$, and consequently $\alpha y_1+(1-\alpha)y_2\in K$. Thus, $K$ is convex.
\end{proof}

\noindent
The following proposition shows that  a function $F\colon\bE_1\to\bE_2^\bullet$ is fully determined by its $K$-epigraph when $K$ is a pointed cone.

\begin{proposition}
Let  $K\subset \bE_2$ be a pointed cone,  and let  $F,G\colon\bE_1\to\bE_2^\bullet$ be  proper. Then  
\[
\Epi{K}{F}=\Epi{K}{G} \iff F=G .
\]
\end{proposition}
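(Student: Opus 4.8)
The equivalence $(\Leftarrow)$ is immediate from the definition of the $K$-epigraph, so the plan is to concentrate on the forward implication: assuming $\Epi{K}{F}=\Epi{K}{G}$, I want to deduce $F=G$, which amounts to matching both the domains and the values of the two maps.

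First I would identify the domains via projection onto the first factor. Since $K$ is a cone we have $0\in K$ (take $\lambda=0$ in the cone definition), so for every $x\in\dom F$ the pair $(x,F(x))$ lies in $\Epi{K}{F}$, because $F(x)-F(x)=0\in K$; conversely, any $(x,y)\in\Epi{K}{F}$ forces $x\in\dom F$ by \eqref{eq:Kepigraph}. Hence the image of $\Epi{K}{F}$ under the canonical projection $\bE_1\times\bE_2\to\bE_1$ is exactly $\dom F$, and the same holds for $G$. Equality of the $K$-epigraphs then yields $\dom F=\dom G=:D$.

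Next I would fix $x\in D$ and compare the fibers over $x$. Using the representation \eqref{eq:Kepigraph-1}, the fiber $\{y\in\bE_2 : (x,y)\in\Epi{K}{F}\}$ equals $F(x)+K$, and likewise the $G$-fiber equals $G(x)+K$. From $\Epi{K}{F}=\Epi{K}{G}$ these two translates of $K$ coincide, i.e.\ $F(x)+K=G(x)+K$. Because $0\in K$, membership of $F(x)$ in $G(x)+K$ gives $F(x)-G(x)\in K$, that is $F(x)\geq_K G(x)$, and symmetrically $G(x)\geq_K F(x)$.

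Finally I would invoke pointedness. With $F(x)\geq_K G(x)$ and $G(x)\geq_K F(x)$ simultaneously, Lemma \ref{lem:Pointed} delivers $F(x)=G(x)$. As $x\in D$ was arbitrary and the domains already agree, this proves $F=G$. The step that genuinely requires care is this last one, and it is precisely where pointedness is indispensable: without the hypothesis $K\cap(-K)=\{0\}$, the identity $F(x)+K=G(x)+K$ only determines $F(x)$ and $G(x)$ up to the lineality space of $K$, so the conclusion would fail for a cone carrying a nontrivial line.
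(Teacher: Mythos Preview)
Your proof is correct and follows essentially the same approach as the paper: both arguments use that $(x,F(x))\in\Epi{K}{F}=\Epi{K}{G}$ forces $x\in\dom G$ and $F(x)\geq_K G(x)$ (and symmetrically), then invoke Lemma~\ref{lem:Pointed} to conclude $F(x)=G(x)$. Your projection/fiber phrasing is a mild cosmetic variation, and the closing remark on why pointedness is indispensable is a nice addition.
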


\begin{proof}
Suppose that $\Epi{K}{F}=\Epi{K}{G}$. In particular, for all $x\in\dom F$, $(x,F(x))\in\Epi{K}{F}$, so $(x,F(x))\in\Epi{K}{G}$, hence $x\in\dom G$ and $F(x)\geqslant_K G(x)$. Likewise, for all $x\in\dom G$, we have $x\in\dom F$ and $G(x)\geqslant_K F(x)$. Thus $\dom F=\dom G$ and for any $x\in\dom F=\dom G$ we have $F(x)=G(x)$ by Lemma \ref{lem:Pointed}.
\end{proof}

\noindent
Given $F\subset\bE_1\colon D\to\bE_2$ and   $u\in\bE_2$, we define the {\em scalarization} $\langle u,F\rangle\colon\bE_1\to\bR\cup\{+\infty\}$ by
\begin{equation}\label{eq:Scalarize}
\langle u,F\rangle (x) = \left\lbrace \begin{array}{ll}
\langle u,F(x)\rangle , & x\in D , \\
+\infty & \mbox{else.}
\end{array}\right. 
\end{equation}
We adapt this notion for $D=\dom F$ if $F:\bE_1\to\bE_2^\bullet$ where $\bE_2$ is ordered by some cone $K$. 
%Here, we use the convention 
%\[
%-\ip{u}{F} = \ip{-u}{F}.
%\]
Equipped with this concept, the following proposition gives a characterization of $\Epi{K}{F}$ (and $\gph F$) via the  epigraphs (and graphs) of the scalarizations $\ip{u}{F}$ for $u\in\Kp$. 
%We use in the following proposition the notation $(Id,\ip{u}{\cdot})$ which denotes, for any $x\in\bE_1$ and $y\in\bE_2$ 
%$$
%(Id,\ip{u}{\cdot})(x,y) = (x,\ip{u}{y}).
%$$

\begin{proposition}\label{prop:EpiChar}
\label{p:cone1}
Let    $K\subset\bE_2$ be a closed and convex cone, and let $F\colon\bE_1\to\bE_2^\bullet$ be proper. Then:
\begin{itemize}
\item[a)] $\Epi{K}{F}  = \bigcap_{u\in \Kp}(\id,\ip{u}{\cdot})^{-1}\big(\epi\ip{u}{F}\big)$;
\item[b)] If  $K$ is pointed, then $\gph F=\bigcap_{u\in \Kp}(\id,\ip{u}{\cdot})^{-1}\big(\gph\ip{u}{F}\big)$. 
\end{itemize}
\end{proposition}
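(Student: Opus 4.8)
The plan is to establish both identities by a direct double-inclusion argument in which the scalarization characterization of the cone order (Lemma~\ref{lem:l1}) does all the real work. The common starting point is to unwind the preimages explicitly. For a fixed $u\in\Kp$, a pair $(x,y)\in\bE_1\times\bE_2$ lies in $(\id,\ip{u}{\cdot})^{-1}\big(\epi\ip{u}{F}\big)$ exactly when $(x,\ip{u}{y})\in\epi\ip{u}{F}$, i.e.\ when $x\in\dom F$ and $\ip{u}{F(x)}\leq\ip{u}{y}$. The domain constraint is automatic here, since for every $u$ membership forces $\ip{u}{F}(x)<+\infty$, i.e.\ $x\in\dom F$. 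Analogously, $(x,y)\in(\id,\ip{u}{\cdot})^{-1}\big(\gph\ip{u}{F}\big)$ exactly when $x\in\dom F$ and $\ip{u}{y}=\ip{u}{F(x)}$.

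For part~a), I would intersect this description over all $u\in\Kp$ and pull the (now $u$-independent) domain condition out of the quantifier: a pair $(x,y)$ lies in the intersection iff $x\in\dom F$ and $\ip{u}{y}\geq\ip{u}{F(x)}$ for every $u\in\Kp$. Since $K$ is closed and convex, Lemma~\ref{lem:l1} converts this bank of scalar inequalities into the single relation $F(x)\leq_K y$, which is precisely the defining condition of $\Epi{K}{F}$. The claimed equality then follows with no further computation.

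For part~b), the same manipulation shows the intersection over $u\in\Kp$ equals $\set{(x,y)}{x\in\dom F,\ \ip{u}{y}=\ip{u}{F(x)}\ \forall u\in\Kp}$. The remaining point is to upgrade ``equal scalarizations against the whole dual cone'' to genuine equality $y=F(x)$, and this is the only place where pointedness of $K$ is used. Reading $\ip{u}{y}=\ip{u}{F(x)}$ as the two families of inequalities $\ip{u}{y}\geq\ip{u}{F(x)}$ and $\ip{u}{F(x)}\geq\ip{u}{y}$ (both over $u\in\Kp$), Lemma~\ref{lem:l1} yields $y\geq_K F(x)$ and $F(x)\geq_K y$ simultaneously, whence Lemma~\ref{lem:Pointed} gives $y=F(x)$, i.e.\ $(x,y)\in\gph F$. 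The reverse inclusion is trivial, as $y=F(x)$ makes $\ip{u}{y}=\ip{u}{F(x)}$ hold for all $u$.

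The argument is largely bookkeeping, so I do not expect a genuine obstacle; the two places to be careful are (i) tracking the domain condition and the orientation of the inequalities in Lemma~\ref{lem:l1}, and (ii) recognizing that pointedness is exactly the hypothesis needed---and used only---in part~b) to convert the antisymmetry of $\geq_K$ into set equality. Notably, neither $K$-convexity nor $K$-closedness of $F$ is invoked anywhere; one needs only the standing assumptions that $K$ be closed and convex, together with pointedness for part~b).
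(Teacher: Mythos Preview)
Your proof is correct and follows essentially the same approach as the paper: both arguments reduce to unwinding the preimages and invoking Lemma~\ref{lem:l1} to pass between $y\geq_K F(x)$ and the family of scalar inequalities $\ip{u}{y}\geq\ip{u}{F(x)}$ over $u\in\Kp$, with pointedness (via Lemma~\ref{lem:Pointed}, or equivalently $K\cap(-K)=\{0\}$) used only in part~b). Your treatment is slightly more explicit about the domain condition, but the substance is identical.
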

\begin{proof}
We deduce from Lemma~\ref{lem:l1} that
\[
\begin{aligned}
\Epi{K}{F}&=\set{(x, v)}{v\geq_KF(x)}\\
&=\set{(x, v)}{\ip{u}{v}\geq\ip{u}{F(x)}\;\forall u\in \Kp}\\
%&=\set{(x, v)}{  v\in \ip{u}{\cdot}^{-1}\big(\ip{u}{F(x)}, +\infty)\big)\;\forall u\in \Kp}\\
&=\bigcap_{u\in \Kp}(\id,\ip{u}{\cdot})^{-1}\big(\epi\ip{u}{F}\big).
\end{aligned}
\]
Similarly,  if $K$ is pointed, we obtain
\[
\begin{aligned}
\gph F&=\set{(x, v)}{x\in\bE_1,\; v=F(x)}\\
&=\set{(x,v)}{x\in\bE_1,\; v-F(x)\in K\cap(-K)}\\
&=\set{(x, v)}{x\in\bE_1,\; v\geq_KF(x)}\bigcap\set{(x, v)}{x\in\bE_1,\; F(x)\geq_Kv}\\
&=\bigcap_{u\in \Kp}(\id,\ip{u}{\cdot})^{-1}\big(\gph\ip{u}{F}\big).
\end{aligned}
\]
\end{proof}

\noindent
As an immediate consequence of the latter theorem, one obtains  Pennanen's sufficient condition for $K$-closedness  \cite[Lemma 6.2]{Pen 99}, which unfortunately excludes functions with domains that are not closed. We therefore provide the following stronger version whose proof is simply 
a refinement of Pennanen's in the next result's part b). Part a) is a refinement of the  scalarization characterization of $K$-convexity.
 
\begin{proposition} \label{p:lsc1}
Let $K\subset \bE_2$ be a closed, convex cone, let $F:\bE_1\to\bE_2^\bullet$ be proper. Then:
\begin{itemize}
\item[a)] The following are equivalent: 
\begin{itemize}
\item[i)] $\ip{u}{F}$ is convex for all $u\in \ri(-K^\circ)$;
\item[ii)] $\ip{u}{F}$ is convex for all $u\in -K^\circ$;
 \item[iii)] $F$ is $K$-convex. 
 \end{itemize}
\item[b)]   $F$ is $K$-closed if $\ip{u}{F} $ is lower  semicontinuous for all $u\in -K^\circ\setminus\{0\}$ and $K\neq \bE_2$.  
\end{itemize} 
In particular, if $K\neq \bE_2$ and $\ip{u}{F}\in \Gamma_0(\bE_1)$ for all $u\in -K^\circ\setminus\{0\}$, then $F$ is $K$-closed and $K$-convex.
\end{proposition}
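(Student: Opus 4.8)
The plan is to read everything through the scalarization identity of Proposition~\ref{prop:EpiChar}a), namely
\[
\Epi{K}{F}=\bigcap_{u\in -K^\circ}(\id,\ip{u}{\cdot})^{-1}\big(\epi\ip{u}{F}\big),
\]
interpreting part a) as a statement about \emph{convexity} of the sets on the right and part b) as a statement about their \emph{closedness}. The only delicate point throughout is the behaviour of the degenerate index $u=0$, whose associated set is $\dom F\times\bE_2$.

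For part a) I would first dispatch the routine implications: (ii)$\Rightarrow$(i) is immediate since $\ri(-K^\circ)\subset -K^\circ$, and (ii)$\Rightarrow$(iii) follows directly from the displayed identity, because each $\epi\ip{u}{F}$ is then convex, preimages under the linear map $(\id,\ip{u}{\cdot})$ preserve convexity, and an intersection of convex sets is convex. For (iii)$\Rightarrow$(ii) I would fix $u\in -K^\circ$ and scalarize: $K$-convexity forces $\dom F$ convex (project $\Epi{K}{F}$ onto $\bE_1$) and gives $\alpha F(x)+(1-\alpha)F(y)\geq_K F(\alpha x+(1-\alpha)y)$ on $\dom F$; applying Lemma~\ref{lem:l1} turns this into the ordinary secant inequality for $\ip{u}{F}$, which, together with convexity of $\dom F$, yields convexity of $\ip{u}{F}$ (the case $u=0$ reducing to $\delta_{\dom F}$). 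The one genuinely analytic step is (i)$\Rightarrow$(ii): for $u\in -K^\circ$ I would write $u=\lim_k u_k$ with $u_k\in\ri(-K^\circ)$, which is legitimate since $-K^\circ$ is a nonempty closed convex cone, so $\cl(\ri(-K^\circ))=-K^\circ$; then $\ip{u_k}{F}\to\ip{u}{F}$ pointwise on all of $\bE_1$ (the values coincide at $+\infty$ off $\dom F$), and Lemma~\ref{lem:Ptw} shows the limit $\ip{u}{F}$ is convex.

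For part b) the main obstacle is exactly the index $u=0$, whose term $\dom F\times\bE_2$ need not be closed when $\dom F$ is not; this is where the refinement over Pennanen's condition lives. I would show that this term is redundant once $K\neq\bE_2$. First, $K\neq\bE_2$ forces $-K^\circ\setminus\{0\}\neq\emptyset$, since otherwise $K=K^{\circ\circ}=\bE_2$ by the bipolar theorem. Then I would verify
\[
\Epi{K}{F}=\bigcap_{u\in -K^\circ\setminus\{0\}}(\id,\ip{u}{\cdot})^{-1}\big(\epi\ip{u}{F}\big):
\]
``$\subseteq$'' is clear, while for ``$\supseteq$'' membership in \emph{any} nonzero term already forces $x\in\dom F$, and the inequalities $\ip{u}{F(x)}\leq\ip{u}{v}$ obtained for all $u\neq 0$ persist trivially at $u=0$, so Lemma~\ref{lem:l1} delivers $v\geq_K F(x)$. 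Now lower semicontinuity of $\ip{u}{F}$ for $u\in -K^\circ\setminus\{0\}$ makes each $\epi\ip{u}{F}$ closed, hence each preimage closed under the continuous map $(\id,\ip{u}{\cdot})$, and the intersection closed, which is $K$-closedness.

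Finally, the ``in particular'' clause just combines the two parts. The hypothesis $\ip{u}{F}\in\Gamma_0(\bE_1)$ for $u\in -K^\circ\setminus\{0\}$ supplies the lower semicontinuity needed in part b) and the convexity needed in part a); convexity of a single nonzero scalarization already makes $\dom F$ convex, whence $\ip{0}{F}=\delta_{\dom F}$ is convex and condition (ii) holds in full, so (ii)$\Rightarrow$(iii) gives $K$-convexity while part b) gives $K$-closedness.
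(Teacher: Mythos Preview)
Your argument is correct. Part a) is essentially the paper's proof: the paper also uses Proposition~\ref{prop:EpiChar}a) for (ii)$\Rightarrow$(iii), Lemma~\ref{lem:l1} for (iii)$\Rightarrow$(i), and the pointwise-limit trick via Lemma~\ref{lem:Ptw} for (i)$\Rightarrow$(ii); your treatment is in fact a bit more careful about the degenerate vector $u=0$ than the paper's.

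For part b) the routes differ. The paper argues by contraposition: assuming $\Epi{K}{F}$ is not closed, it takes a sequence $\{(x_k,y_k)\in\Epi{K}{F}\}\to(x,y)\notin\Epi{K}{F}$, uses bipolarity to separate $y-F(x)$ from $K$ by some $u^*\in -K^\circ$ (or picks any nonzero $u^*$ if $x\notin\dom F$, which is where $K\neq\bE_2$ enters), and concludes that $\ip{u^*}{F}$ fails lower semicontinuity at $x$. You instead argue directly, showing that the $u=0$ term in the intersection of Proposition~\ref{prop:EpiChar}a) is redundant once $-K^\circ\setminus\{0\}\neq\emptyset$, and then closing up termwise. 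Both arguments isolate the same obstruction (the possibly non-closed set $\dom F\times\bE_2$ at $u=0$), but your version makes the structural reason for its redundancy explicit, while the paper's contrapositive is closer in spirit to a separation argument and avoids having to re-verify the intersection identity over the reduced index set.
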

\begin{proof}  a) 'i)$\Rightarrow$ ii)': Let $u\in -K^\circ\setminus\{0\}$. Then $u$ is a limit $\{u_k\in \ri(-K^\circ)\}\to u$, and hence $\ip{u}{F}$ is a pointwise limit  of conxex functions $\ip{u_k}{F}$, hence convex by Lemma \ref{lem:Ptw}. 
\\
'ii)$\Rightarrow$ iii)':  Follows from Proposition  \ref{prop:EpiChar} a). 
\\
'iii)$\Rightarrow$ i)': Follows from Lemma \ref{lem:l1}.
\smallskip

\noindent
b) Assume that $F$ is not $K$-closed i.e. there exists $\{(x_k,y_k)\in \Epi{K}{F}\}\to (x,y)\notin \Epi{K}{F}$. Then $x\notin \dom F$    or 
$y-F(x)\notin K=K^{\circ\circ}$. In the latter case  there exists $u^*\in -K^\circ$ such that
\begin{equation}\label{eq:IEK}
\ip{u^*}{y}<\ip{u^*}{F(x)}\AND \ip{u^*}{y_k}\geq \ip{u^*}{F(x_k)}\quad\forall k\in \bN.
\end{equation}
If $x\in \dom F$, then necessarily $u^*\neq 0$. On other hand, if $x\notin \dom F$, since $-K^\circ \supsetneq \{0\}$ by assumption, we can choose
$u^*\neq 0$. All in all, there exists $u^*\in -K^\circ\setminus\{0\}$ such that \eqref{eq:IEK} holds. We hence obtain
\[
\ip{u^*}{F}(x)>\ip{u^*}{y}=\liminf_{k\to\infty}\ip{u^*}{y_k}\geq \liminf_{k\to\infty} \ip{u^*}{F}(x_k),
\]
and, consequently,  $\ip{u^*}{F}$ is not lsc, which concludes the proof of part b). 
\end{proof}

\noindent
We close out this  preparatory paragraph with the following useful result.

\begin{lemma}
\label{l:intersection-cone-K-convexity}
Let  $D\subset\bE_1$  be  nonempty,  let  $F\colon D\to\bE_2$, and let  $(K_i)_{i\in I}$ be a family of cones of $K_i\subset \bE_2$. Then
\[
\Epi{\left(\underset{i\in I}\bigcap K_i\right)}{F} = \underset{i\in I}\bigcap\left( \Epi{K_i}{F}\right).
\]
In particular, if  $F$ is $K_i$-closed for all $i\in I$, then $F$ is $(\cap_{i\in I} K_i)$-closed. Moreover, if    $F$ is $K_i$-convex for all $i\in I$ then $F$  is $(\cap_{i\in I} K_i)$-convex.  The latter is an equivalence if $K_i$ is convex for all $i\in I$.
\end{lemma}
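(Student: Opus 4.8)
The plan is to establish the set identity $\Epi{(\cap_i K_i)}{F}=\cap_i \Epi{K_i}{F}$ first, and then read off all the stated consequences as corollaries, since intersections of closed (resp. convex) sets are closed (resp. convex).

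For the set identity I would argue by double inclusion at the level of points, working directly from the definition \eqref{eq:Kepigraph} of the $K$-epigraph. Fix $(x,y)\in D\times\bE_2$. By definition, $(x,y)\in\Epi{(\cap_i K_i)}{F}$ means precisely that $y-F(x)\in\bigcap_{i\in I}K_i$, which is equivalent to saying $y-F(x)\in K_i$ for every $i\in I$, i.e. $F(x)\leq_{K_i}y$ for every $i$. This in turn says exactly that $(x,y)\in\Epi{K_i}{F}$ for every $i\in I$, that is, $(x,y)\in\bigcap_{i\in I}\Epi{K_i}{F}$. So the identity is really just the elementary fact that membership in an intersection of cones is equivalent to membership in each, transported through the ordering; there is essentially no obstacle here, only the need to state it cleanly.

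For the consequences I would invoke the set identity together with Definition \ref{d:Kconvexity-Kepigraph-K-closed}. If $F$ is $K_i$-closed for all $i$, then each $\Epi{K_i}{F}$ is closed, hence their intersection is closed, and by the identity this intersection equals $\Epi{(\cap_i K_i)}{F}$; thus $F$ is $(\cap_i K_i)$-closed. The convexity statement is identical, replacing ``closed'' by ``convex'' and using that an arbitrary intersection of convex sets is convex. For the final equivalence, I note that one direction is the implication just proved; for the converse, if $F$ is $(\cap_i K_i)$-convex then $\Epi{(\cap_i K_i)}{F}$ is convex, and I would argue that when each $K_i$ is convex, each $\Epi{K_i}{F}$ inherits convexity. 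The cleanest route is Proposition \ref{prop:NecK}(b): since $\cap_i K_i\subset K_i$ and convexity of $F$ w.r.t. the smaller cone does not automatically give it w.r.t. a larger one in general, one cannot use Lemma \ref{lem:CC} naively.

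The one genuinely delicate point — and where I expect the main obstacle to lie — is the converse direction of the final equivalence, namely deducing $K_i$-convexity of $F$ from $(\cap_i K_i)$-convexity. Here I would use the characterization $\Epi{K_i}{F}=\Epi{(\cap_i K_i)}{F}+\{0\}\times K_i$, which follows from Lemma \ref{lem:CC} applied to the nested pair $\cap_j K_j\subset K_i$ (valid since each $K_i$, hence their intersection, is convex). Because $\Epi{(\cap_i K_i)}{F}$ is convex by hypothesis and $\{0\}\times K_i$ is convex, their Minkowski sum is convex, so $\Epi{K_i}{F}$ is convex and $F$ is $K_i$-convex for each $i$. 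This closes the equivalence, and the convexity assumption on all $K_i$ is exactly what is needed to make Lemma \ref{lem:CC} applicable.
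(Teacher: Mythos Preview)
Your proof is correct and follows essentially the same route as the paper's: the set identity by pointwise equivalence, and the closedness/convexity consequences by preservation under intersection. One small remark on the converse direction: your detour through the Minkowski-sum decomposition is unnecessary, since Lemma~\ref{lem:CC} already contains the implication ``$(\cap_i K_i)$-convex $\Rightarrow$ $K_i$-convex'' as its ``In particular'' clause (this is exactly what the paper invokes); your claim that ``convexity w.r.t.\ the smaller cone does not automatically give it w.r.t.\ a larger one'' is mistaken when both cones are convex, which is precisely the hypothesis here.
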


\begin{proof}
For any $x\in\dom F$ and $y\in\bE_2$, we have
\begin{eqnarray*}
(x,y)\in\Epi{\left(\bigcap_{i\in I} K_i\right)}{F} & \Longleftrightarrow  & y-F(x)\in\bigcap_{i\in I} K_i \\
 &\Longleftrightarrow& \forall i\in I:\;y-F(x)\in K_i \\
 &\Longleftrightarrow  &\forall i\in I: \;(x,y)\in\Epi{K_i}{F} \\
 & \Longleftrightarrow  &(x,y)\in \bigcap_{i\in I}\Epi{K_i}{F}.
\end{eqnarray*}
The addendum follows from the fact that  intersection preserves closedness and convexity, and that $\bigcap_{i\in I}K_i\subset K_i$, so $\bigcap_{i\in I}K_i$-convexity implies  $K_i$-convexity for all $i\in I$ if these are convex, see Lemma \ref{lem:CC}.
\end{proof}

%\noindent
%We close out this subsection with a characterization of $K$-closedness.
%
%\begin{proposition}[$K$-closed = $K$-lsc] Let $K\subset \bE_2$ be a closed cone and let $F:\bE_1\to \bE_2^\bullet$. Then the following are equivalent:
%\begin{itemize}
%\item[i)] $F$ is $K$-closed;
%\item[ii)] $F$ is {\em $K$-lsc}, i.e.:
%$
%\forall  \{x_k\in\dom F\}\to x, \;  \{v_k\in F(x_k)+K\}\to v\;\Rightarrow\; v\in F(x)+K.
%$
%\end{itemize}
%\end{proposition}
%\begin{proof} `i)$\Rightarrow$ii)':  Let $\{x_k\in \dom F\}\to x$  and $ \{v_k\in F(x_k)+K\}\to v$. Then by ii) we necessarily have $x\in \dom F$.  As $\{(x_k,v_k)\in\Epi{K}{F}\}\to (x,v)$, and $\Epi{K}{F}$ is closed by i), we have $(x,v)\in \Epi{K}{F}$, i.e. 
%$v-F(x)\in K$, which is the desired conclusion.
%\smallskip
%
%\noindent
%`ii)$\Rightarrow$i)': Let $\{(x_k, v_k)\in \Epi{K}{F}\}\to (x, v)$. \end{proof}

\subsection{Affine and \boldmath{$\{0\}$}-convex functions}\label{sec:0cvx}

We extend the notion of affine functions to affine subsets of $\bE$, see, e.g., Rockafellar \cite[\S 1]{Roc 70} for the standard case.

\begin{definition}
Let $A\subset\bE$ be an affine set  and let $x_0\in A$. Then a function $F\colon A\to\bE_2$ is said to be affine if   there exists a linear map $L:\para A\mapsto \bE_2$ and a vector $b\in\bE_2$ such that, for all $x\in A$, we have $F(x)=L(x-x_0)+b$ for all $x\in A$.\end{definition}

\begin{lemma}
\label{l:0-convexity-and-affine-functions}
Let $A\subset\bE_1$ be affine. Then  $F\colon A\to\bE_2$ is affine if and only if  
\begin{equation}\label{eq:Aff}
F(tx+(1-t)y)=tF(x)+(1-t)F(y)\quad \forall x,y\in A,\;t\in(0,1).
\end{equation}
\end{lemma}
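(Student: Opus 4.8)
The plan is to prove the two implications separately, with the reverse direction (the secant equality \eqref{eq:Aff} $\Rightarrow$ $F$ affine) carrying essentially all of the weight. For the ``only if'' direction I would simply insert the representation $F(x)=L(x-x_0)+b$ into both sides of \eqref{eq:Aff}: since $tx+(1-t)y-x_0=t(x-x_0)+(1-t)(y-x_0)$ and $L$ is linear, $L$ distributes over the combination and the two sides agree. This is a one-line computation and poses no difficulty.

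For the ``if'' direction, fix any $x_0\in A$, set $b:=F(x_0)$, and define $L\colon\para A\to\bE_2$ by $L(v):=F(x_0+v)-F(x_0)$. This is well defined because $\para A=A-x_0$, so $x_0+v\in A$ whenever $v\in\para A$. The entire problem then reduces to showing that $L$ is linear, and for this I would first upgrade \eqref{eq:Aff} from $t\in(0,1)$ to all $t\in\R$. For $t>1$ the key observation is that $x$ is itself a genuine convex combination of $z:=tx+(1-t)y$ and $y$, namely $x=\tfrac1t z+(1-\tfrac1t)y$ with $\tfrac1t\in(0,1)$; applying \eqref{eq:Aff} to this and solving for $F(z)$ recovers the identity for $t>1$. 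The case $t<0$ follows by exchanging the roles of $x$ and $y$ (equivalently, replacing $t$ by $1-t>1$), and $t\in\{0,1\}$ is trivial. Here affineness of $A$ is used crucially, so that all these auxiliary combinations remain in $A$ and $F$ is defined on them.

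With the identity now available for every $t\in\R$, linearity of $L$ follows in two steps. Homogeneity comes from the affine decomposition $x_0+\alpha v=\alpha(x_0+v)+(1-\alpha)x_0$, to which the extended identity applies, giving $L(\alpha v)=\alpha L(v)$. Additivity comes from routing the sum through a midpoint: writing $x_0+v+w=\tfrac12(x_0+2v)+\tfrac12(x_0+2w)$ and applying \eqref{eq:Aff} with $t=\tfrac12$, then clearing the factors of $2$ via the homogeneity just established, yields $L(v+w)=L(v)+L(w)$. Throughout one uses that $\para A$ is a subspace, so $2v$, $2w$, $v+w$, and $\alpha v$ all lie in $\para A$. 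Then $F(x)=L(x-x_0)+b$ for all $x\in A$, which is precisely affineness.

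The step I expect to be the main obstacle is establishing additivity of $L$, since \eqref{eq:Aff} only supplies two-point combinations and therefore cannot directly reach $v+w$ from $v$ and $w$; the midpoint-of-$2v$-and-$2w$ trick combined with homogeneity is what resolves this. A secondary point requiring care, rather than depth, is the bookkeeping that every auxiliary point used lies in $A$ and every auxiliary vector lies in $\para A$ — exactly the place where affineness of $A$ and the subspace property of $\para A$ are invoked.
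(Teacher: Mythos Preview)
Your proof is correct and follows essentially the same route as the paper's: both extend \eqref{eq:Aff} from $t\in(0,1)$ to all $t\in\R$ by the inversion/role-swap argument, define $L(v)=F(x_0+v)-F(x_0)$, obtain homogeneity by taking $y=x_0$, and derive additivity via the midpoint-of-$2v$-and-$2w$ trick. The only difference is cosmetic --- the paper leaves the extension step as ``straightforward'' while you spell out the $t>1$ case explicitly.
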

\begin{proof} Assume first that \eqref{eq:Aff} holds.  Discriminating the three cases $t\in [0,1]$, $t>1$ and $t<0$,   it is straightforward to show that, in fact,  we have
\begin{equation}\label{eq:Aff2}
F(tx+(1-t)y)=tF(x)+(1-t)F(y)\quad \forall x,y\in A,\;t\in\R.
\end{equation}
Now  let $x_0\in A$, i.e. $\para A=A-x_0$, and define $L:\para A\to\bE_2$ by $L(x):= F(x+x_0)-F(x_0)$. 
Using \eqref{eq:Aff2}, we   find that  $L(tx+(1-t)y)=tL(x)+(1-t)L(y)$ for all $x,y\in\para A$ and  $t\in\bR$.
 Thus, taking $y=0$, as $L(0)=0$, gives  $L(tx)=tL(x)$ for all $x\in \para A$ and all $t\in\bR$. Hence, $L(x+y)=L(\frac{1}{2} (2x) +\frac{1}{2}(2y))=\frac{1}{2}L(2x)+\frac{1}{2}L(2y)=L(x)+L(y)$, for all $x,y\in \para A$. This implies that $L$ is linear. Hence,  for all $x\in A$ and $b:=F(x_0)$, we have
$
F(x)=L(x-x_0)+b.
$
Thus, $F$ is affine.

 Conversely, if $F$ is affine, then we can write 
$F = L((\cdot)-x_0)+b$  for some linear map $L:\para A\to\bE_2$, $x_0\in A$ and $b\in\bE_2$. Then
\[
F(tx+(1-t)y) =  t(L(x-x_0)+b) + (1-t) (L(y-x_0)+b) = tF(x)+(1-t)F(y),
\]
for all $x,y\in A$ and $t\in\bR$. In particular, this is true for all $t\in (0,1)$.
\end{proof}

\begin{proposition}
\label{p:0-convexity-and-affine-functions}
Let $D\subset \bE_1$ be nonempty convex. Then  $F\colon D\to\bE_2$ is $\{0\}$-convex if and only if  there exists an affine function $G\colon\aff D\to\bE_2$ such that $F=G_{|D}$.
\end{proposition}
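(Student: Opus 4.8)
The plan is to reduce $\{0\}$-convexity to an affine secant identity on $D$, dispose of the easy implication, and then build the affine extension $G$ by radial scaling from a relative interior point of $D$.

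First I would record that, since $\{0\}$ is a convex cone and $y\geq_{\{0\}}x$ means precisely $y=x$, we have $\Epi{\{0\}}{F}=\gph F$, and the secant characterization of $K$-convexity stated just after Definition~\ref{d:Kconvexity-Kepigraph-K-closed} specializes to: $F$ is $\{0\}$-convex if and only if
\begin{equation}\label{eq:plan-secant}
F(\alpha x+(1-\alpha)y)=\alpha F(x)+(1-\alpha)F(y)\qquad \forall x,y\in D,\ \alpha\in(0,1).
\end{equation}
The backward implication is then immediate: if $F=G_{|D}$ for some affine $G\colon\aff D\to\bE_2$, then Lemma~\ref{l:0-convexity-and-affine-functions}, applied to $G$ on the affine set $\aff D\supset D$, yields \eqref{eq:Aff} for $G$, hence \eqref{eq:plan-secant} for $F$, so $F$ is $\{0\}$-convex.

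For the forward implication I would assume \eqref{eq:plan-secant} and construct the extension. Since $D$ is nonempty and convex, $\ri D\neq\emptyset$ and $\aff D=\aff(\ri D)$ \cite[\S 6]{Roc 70}; fix $x_0\in\ri D$. For $z\in\aff D$, put $v:=z-x_0\in\para D$; because $x_0\in\ri D$ there is $\lambda\in(0,1]$ with $x_0+\lambda v\in D$, and I set
\[
G(z):=\tfrac1\lambda\, F(x_0+\lambda v)+\big(1-\tfrac1\lambda\big)F(x_0).
\]
Choosing $\lambda=1$ gives $G_{|D}=F$. Independence of the chosen $\lambda$ is verified by comparing two admissible values $\lambda\leq\mu$: the point $x_0+\lambda v$ equals the convex combination $(1-\tfrac\lambda\mu)x_0+\tfrac\lambda\mu(x_0+\mu v)$ of two points of $D$, so \eqref{eq:plan-secant} rewrites $F(x_0+\lambda v)$ and the two expressions defining $G(z)$ collapse to one value.

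It remains to show $G$ is affine, which I would obtain from Lemma~\ref{l:0-convexity-and-affine-functions} by verifying \eqref{eq:Aff} for $G$ on $\aff D$. Given $z_1,z_2\in\aff D$ and $t\in(0,1)$, set $v_i:=z_i-x_0$ and pick a \emph{common} $\lambda\in(0,1]$ with $x_0+\lambda v_i\in D$ for $i=1,2$; convexity of $D$ gives $x_0+\lambda(tv_1+(1-t)v_2)=t(x_0+\lambda v_1)+(1-t)(x_0+\lambda v_2)\in D$, and substituting this into \eqref{eq:plan-secant} makes $G(tz_1+(1-t)z_2)$ coincide with $tG(z_1)+(1-t)G(z_2)$ after expansion; Lemma~\ref{l:0-convexity-and-affine-functions} then gives that $G$ is affine, completing the proof. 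I expect the extension step to be the only real obstacle: both the well-definedness and the secant check for $G$ rely on pulling every scaled combination back to a genuine convex combination \emph{inside} $D$, where \eqref{eq:plan-secant} applies, and the relative interior point $x_0$ is exactly what forces the relevant scalings to land in $D$.
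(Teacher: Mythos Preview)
Your proof is correct, but the construction of the extension $G$ differs from the paper's. The paper does not use a relative interior point at all: instead it invokes Lemma~\ref{l:affine-hull-of-a-convex-set} to write every $z\in\aff D$ as $\alpha x-\beta y$ with $x,y\in D$, $\alpha,\beta\geq 0$, $\alpha-\beta=1$, and sets $G(z):=\alpha F(x)-\beta F(y)$; well-definedness and the secant identity for $G$ are then verified by algebraic bookkeeping with these coefficients. Your radial-scaling construction from a point $x_0\in\ri D$ is tidier---both well-definedness and affinity reduce to a single application of the secant identity with a suitably chosen common $\lambda$---but it leans on $\ri D\neq\emptyset$, which is automatic in the finite-dimensional setting of the paper yet would fail in general vector spaces. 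The paper's two-point representation is purely algebraic and would survive that generalization, at the cost of a heavier verification. In the Euclidean setting both arguments are complete.
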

\begin{proof}  First, assume that $F\colon D\to\bE_2$ is $\{0\}$-convex and   $z\in\aff D$. 
By Lemma \ref{l:affine-hull-of-a-convex-set}, we can write $z=\alpha x-\beta y$, for some $\alpha,\beta\geqslant 0$, $\alpha-\beta =1$, and $x,y\in D$. Suppose $z$ has two representations of this form, i.e. $z=\alpha x-\beta y=\alpha' x'-\beta' y'$, with $\alpha,\beta,\alpha',\beta'\geqslant 0$, $\alpha-\beta=\alpha'-\beta' =1$, and $x,y,x',y'\in D$. 
Then  $\Delta:=\alpha+\beta'(=\alpha'+\beta)=1+\beta+\beta' >0$. 
By convexity of $D$, we find that 
$
\frac{\alpha}{\Delta}x+\frac{\beta'}{\Delta}y'=\frac{\alpha'}{\Delta}x'+\frac{\beta}{\Delta}y\in D.
$
Using the $\{0\}$-convexity of $F$, we have
$
\frac{\alpha}{\Delta}F(x)+\frac{\beta'}{\Delta}F(y') = \frac{\alpha'}{\Delta}F(x')+\frac{\beta}{\Delta}F(y).
$
Multiplying by $\Delta$ and rearranging the above terms, we get
\[
\alpha F(x)-\beta F(y) = \alpha'F(x')-\beta'F(y') .
\]
Therefore, we the function  $G:\aff D\to\bE_2, \;G(z)=\alpha F(x)-\beta F(y)$ for $z\in\aff D$ given by  $z=\alpha x-\beta y$, $\alpha,\beta\geqslant 0$, $\alpha-\beta=1$, $x,y\in D$ is well-defined.

Now, let $z$ and $z'$ in $D$ given by $z=\alpha x-\beta y$ and $z'=\alpha' x'-\beta y'$, with $\alpha,\beta,\alpha',\beta'\geqslant 0$, $\alpha-\beta =1$, $\alpha'-\beta' =1$, and $x,y,x',y'\in D$. Let $t\in (0,1)$ and set $p:=(1-t)z+tz'$, as well as 
$a:= (1-t)\alpha+t\alpha' $ and $b:=(1-t)\beta + t\beta' $. Then $a,b\geqslant 0$ and $a-b=1$. 

If $b=0$, then $\beta=\beta'=0$ and $\alpha=\alpha'=1$, so $z=x\in D$ and $z'=x'\in D$, and thus,  using the $\{0\}$-convexity of $F$,  we have $G(p)=F(p)=F((1-t)z+tz')=(1-t)F(z)+tF(z')=(1-t)G(z)+tG(z')$.

If $b\neq 0$, then $\beta\neq 0$ or $\beta'\neq 0$, hence $a,b>0$. Then, we have
\[
p = a \underbrace{\left[\frac{(1-t)\alpha}{a}x+\frac{t\alpha'}{a}x'\right]}_{\in D} - b \underbrace{\left[\frac{(1-t)\beta}{b}y+\frac{t\beta'}{b}y'\right]}_{\in D}.
\]
Using this, and recalling the fact that  $a,b\geqslant 0$ and $a-b=1$, the definition of $G$ yields
\[
G(p)= a F\left(\frac{(1-t)\alpha}{a}x+\frac{t\alpha'}{a}x'\right) - b F\left(\frac{(1-t)\beta}{b}y+\frac{t\beta'}{b}y'\right) .
\]
As $F$ is $\{0\}$-convex, we thus infer 
\begin{align*}
G(p) & = a \frac{(1-t)\alpha}{a} F(x) + a \frac{t\alpha'}{a} F(x') - b\frac{(1-t)\beta}{b}F(y) -b \frac{t\beta'}{b}F(y') \\
& = (1-t) \underbrace{[\alpha F(x) -\beta F(y)]}_{G(z)} + t\underbrace{[\alpha' F(x') -\beta' F(y')]}_{G(z')} \\
& = (1-t) G(z) +t G(z') .
\end{align*}
All in all,   by Lemma \ref{l:0-convexity-and-affine-functions}, $G$ is affine.

Conversely, if there exists $G:\aff D\to \bE_2$ affine such that $F=G_{|D}$,  then Lemma \ref{l:0-convexity-and-affine-functions} yields
 $G(tx+(1-t)y)=tG(x)+(1-t)G(y)$  for all $t\in (0,1)$ and $x,y\in D$.
Hence
$F(tx+(1-t)y)=tF(x)+(1-t)F(y)$ for all $x,y\in D$, $t\in (0,1)$, and as $D$ is a convex set,  $F$ is $\{0\}$-convex.

\end{proof}

\noindent
As a simple corollary we  get the following result.

\begin{corollary} The $\{0\}$-convex functions $\bE_1\to \bE_2$ are exactly the affine functions. 
\end{corollary}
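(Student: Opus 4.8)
The plan is to obtain this as the degenerate case $D=\bE_1$ of the preceding Proposition \ref{p:0-convexity-and-affine-functions}. First I would record the trivial structural facts about the full space: $\bE_1$ is a nonempty convex set, and its affine hull is all of $\bE_1$, i.e. $\aff\bE_1=\bE_1$ (equivalently $\para\bE_1=\bE_1$). With these in hand, I would instantiate Proposition \ref{p:0-convexity-and-affine-functions} at $D:=\bE_1$. It then reads: a function $F\colon\bE_1\to\bE_2$ is $\{0\}$-convex if and only if there exists an affine function $G\colon\aff\bE_1=\bE_1\to\bE_2$ with $F=G_{|\bE_1}$.

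The second and final step is to dispose of the restriction. Since the domain of $G$ is already the full space $\bE_1$, restricting to $\bE_1$ changes nothing, so $G_{|\bE_1}=G$. Hence the existence of an affine $G$ with $F=G_{|\bE_1}$ is the same as $F$ itself being affine: for one implication take $G=F$, and for the other simply note $F=G$ is then affine. Combining the two steps yields the asserted equivalence, namely that the $\{0\}$-convex functions $\bE_1\to\bE_2$ are precisely the affine ones.

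I expect no real obstacle here, as the corollary is a direct specialization of the proposition; the only point worth flagging explicitly is the harmless identification $\aff\bE_1=\bE_1$, which is exactly what guarantees that the affine extension supplied by Proposition \ref{p:0-convexity-and-affine-functions} is already defined on the whole of $\bE_1$ and therefore coincides with $F$ rather than being a proper extension of it.
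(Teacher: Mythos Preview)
Your proposal is correct and matches the paper's approach exactly: the corollary is stated immediately after Proposition~\ref{p:0-convexity-and-affine-functions} with the remark that it is a ``simple corollary,'' and the intended argument is precisely the specialization $D=\bE_1$ together with $\aff\bE_1=\bE_1$ that you describe.
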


\subsection{Convexity with respect to a (nontrivial) subspace}

Using our study on $\{0\}$-convexity above, we are now in a position  to investigate the functions which are convex w.r.t. a given (nontrivial) subspace. To this end, observe that for a subspace $U\subset \bE$, the polar (and the dual) cone of $U$ equal the orthogonal complement $U^\perp:=\set{u\in\bE}{\ip{u}{y}=0\;\forall y\in U}$.

\begin{lemma}
\label{l:convexity-wrt-subspace}
Let $U\subset \bE_2$ be a nontrivial subspace and let $\{e_1,\dots,e_r\}$ be a basis of $U^\perp$. 
Then for $F:D\subset \bE_1\to\bE_2$, with $D\subset \bE_1$ convex,  the following are equivalent:
\begin{enumerate}
\item[i)] $F$ is $U$-convex;
\item[ii)]  $\ip{u}{F}$ is $\{0\}$-convex  for all $u\in U^\perp$;
\item[iii)]  $\ip{e_i}{F}$ is $\{0\}$-convex for all $i=1,\dots,r$.

\end{enumerate}
\end{lemma}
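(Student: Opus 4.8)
The plan is to route everything through the scalarization characterization of $U$-convexity in Proposition~\ref{p:lsc1}(a) together with the identification of $\{0\}$-convexity with affinity of a scalar function (Lemma~\ref{l:0-convexity-and-affine-functions} and Proposition~\ref{p:0-convexity-and-affine-functions}). The key structural observation, recorded just before the statement, is that for a subspace $U$ the polar and dual cones coincide with the orthogonal complement, $-U^\circ=U^\circ=U^\perp$, which is itself a subspace; in particular it is stable under $u\mapsto -u$. It is precisely this symmetry that will upgrade mere convexity of the scalarizations to their affinity, matching the $\{0\}$-convexity appearing in (ii) and (iii). Throughout I would view $F:D\to\bE_2$ as a map $\bE_1\to\bE_2^\bullet$ with $\dom F=D$, so that the hypotheses of Proposition~\ref{p:lsc1} (closed convex cone $U$, proper $F$) are met.

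First I would establish (i)$\Leftrightarrow$(ii). Since $U$ is a closed convex cone with $-U^\circ=U^\perp$, Proposition~\ref{p:lsc1}(a) gives that $F$ is $U$-convex if and only if $\ip{u}{F}$ is convex for every $u\in U^\perp$. For the forward direction I would then exploit that $U^\perp$ is a subspace: if $u\in U^\perp$ then also $-u\in U^\perp$, so both $\ip{u}{F}$ and $\ip{-u}{F}=-\ip{u}{F}$ are convex. Writing the two secant inequalities for $t\in(0,1)$, $x,y\in D$, and adding them forces the affine secant identity $\ip{u}{F}(tx+(1-t)y)=t\ip{u}{F}(x)+(1-t)\ip{u}{F}(y)$ on the convex set $D$, which by Lemma~\ref{l:0-convexity-and-affine-functions}/Proposition~\ref{p:0-convexity-and-affine-functions} is exactly $\{0\}$-convexity of $\ip{u}{F}$; this yields (ii). Conversely, (ii) asserts that each $\ip{u}{F}$ with $u\in U^\perp=-U^\circ$ is affine, hence in particular convex, so Proposition~\ref{p:lsc1}(a) returns $U$-convexity, giving (i).

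It remains to connect (ii) and (iii). The implication (ii)$\Rightarrow$(iii) is immediate, since each basis vector $e_i$ lies in $U^\perp$. For (iii)$\Rightarrow$(ii), I would take an arbitrary $u\in U^\perp$, expand $u=\sum_{i=1}^r c_i e_i$ in the given basis, and use bilinearity of the inner product to write $\ip{u}{F}=\sum_{i=1}^r c_i\,\ip{e_i}{F}$ pointwise on $D$. Since each $\ip{e_i}{F}$ satisfies the affine secant identity of Lemma~\ref{l:0-convexity-and-affine-functions}, so does any finite real linear combination, whence $\ip{u}{F}$ is $\{0\}$-convex.

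I expect no genuine obstacle here; the single point demanding care is the ``convex and co-convex implies affine'' step, i.e.\ correctly using that $U^\perp$ is closed under negation to pass from the one-sided convexity delivered by Proposition~\ref{p:lsc1}(a) to the two-sided affine identity that characterizes $\{0\}$-convexity. Everything else is bookkeeping: identifying $-U^\circ=U^\perp$ and noting that linear combinations preserve the affine secant equation.
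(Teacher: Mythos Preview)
Your proposal is correct and follows essentially the same approach as the paper: both route (i)$\Leftrightarrow$(ii) through Proposition~\ref{p:lsc1}(a) using $-U^\circ=U^\perp$, upgrade convexity to $\{0\}$-convexity via the subspace symmetry $u\mapsto -u$, and handle (ii)$\Leftrightarrow$(iii) by expanding in the basis and using linearity of the inner product. The only difference is cosmetic: you spell out the ``convex plus co-convex forces the affine secant identity'' step more explicitly than the paper does.
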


\begin{proof}`i)$\Leftrightarrow$ii)':  
If $F$ is $U$-convex and since $U^\perp$ is a subspace, by  Proposition \ref{p:lsc1}   we find  that  both  $\ip{u}{F}$ and  $\ip{-u}{F}$ are convex for every $u\in U^\perp$. Therefore, $\ip{u}{F}$ is $\{0\}$-convex for all $u\in U^\perp$. In turn, if 
$\ip{u}{F}$ is $\{0\}$-convex  for all $u\in U^\perp$,  then in particular $\ip{u}{F}$ is ($\R_+$-)convex  for all $u\in U^\perp$, so we can use Proposition \ref{p:lsc1} to conclude that $F$ is $U$-convex.
\smallskip

\noindent
`ii)$\Leftrightarrow$iii)':  The implication  ii)$\Rightarrow$iii) is trivial.
In turn, if iii) holds,  let $u\in U^\perp$, i.e. $u=\sum_{i=1}^r u_i e_i$ for some $u_i\in\bR$.  By assumption, $\ip{e_i}{F}\; (i=1,\dots,r)$ are $\{0\}$-convex.  Since   $\ip{u}{F}(x)=\sum_{i=1}^r  u_i\ip{e_i}{F}(x)$ for all $x\in \dom F$,  we find that $\ip{u}{F}$ is  $\{0\}$-convex.
\end{proof}

\begin{proposition}
Let $U\subset \bE_2$ be a nontrivial subspace  and let  $F:D\subset \bE_1\to\bE_2$ with $D$ convex. Then $F$ is $U$-convex if and only if  there exists an affine map $G\colon\aff D\to\bE_2$ and  a  function $H\colon\aff D\to U$ such that $G_{|D}+H_{|D}=F$.
%\[
%F(x) = G(x) + H(x) \quad \forall x\in D.
%\]
\end{proposition}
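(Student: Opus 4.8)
The plan is to exploit the orthogonal decomposition $\bE_2=U\oplus U^\perp$ and reduce everything to the scalar characterization of $\{0\}$-convexity in Proposition \ref{p:0-convexity-and-affine-functions}, bridged by the scalarization criterion of Lemma \ref{l:convexity-wrt-subspace}. Let $P_U$ and $P_{U^\perp}$ denote the orthogonal projections onto $U$ and $U^\perp$, so that $F=P_{U^\perp}F+P_UF$ on $D$; the summand $P_{U^\perp}F$ will supply the affine part $G$ and $P_UF$ the $U$-valued part $H$.

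For the forward direction, suppose $F$ is $U$-convex. By Lemma \ref{l:convexity-wrt-subspace}, $\ip{e_i}{F}$ is $\{0\}$-convex for every vector $e_i$ in a basis $\{e_1,\dots,e_r\}$ of $U^\perp$. Since $\ip{e_i}{F}=\ip{e_i}{P_{U^\perp}F}$, the $e_i$ span $U^\perp$, and $P_{U^\perp}F$ takes values in $U^\perp$, the vector interpolation identity defining $\{0\}$-convexity of $P_{U^\perp}F$ holds if and only if it holds after pairing with each $e_i$; hence $P_{U^\perp}F\colon D\to U^\perp\subset\bE_2$ is $\{0\}$-convex. Proposition \ref{p:0-convexity-and-affine-functions} then furnishes an affine map $G\colon\aff D\to\bE_2$ with $G_{|D}=P_{U^\perp}F$. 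Setting $H:=F-G_{|D}=P_UF$ on $D$ and extending $H$ by, say, $0$ on $\aff D\setminus D$, we obtain $H\colon\aff D\to U$ (as $P_UF\in U$) with $G_{|D}+H_{|D}=F$, as desired.

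For the converse, assume $F=G_{|D}+H_{|D}$ with $G\colon\aff D\to\bE_2$ affine and $H\colon\aff D\to U$. Fix $u\in U^\perp$. Since $H_{|D}(x)\in U$, we have $\ip{u}{H_{|D}(x)}=0$, so $\ip{u}{F}=\ip{u}{G}_{|D}$. As $G$ is affine and $\ip{u}{\cdot}$ is linear, $\ip{u}{G}$ is an affine real-valued function on $\aff D$, whence its restriction $\ip{u}{F}$ is $\{0\}$-convex by Proposition \ref{p:0-convexity-and-affine-functions}. Since $u\in U^\perp$ was arbitrary, Lemma \ref{l:convexity-wrt-subspace} yields that $F$ is $U$-convex.

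The only genuinely delicate point is the equivalence used in the forward direction, namely that the vector-valued $\{0\}$-convexity of $P_{U^\perp}F$ is equivalent to the scalar $\{0\}$-convexity of each coordinate $\ip{e_i}{F}$. This rests on the fact that, by Lemma \ref{l:0-convexity-and-affine-functions}, $\{0\}$-convexity is exactly the affinity condition \eqref{eq:Aff}, which for a $U^\perp$-valued map can be tested componentwise against a basis of $U^\perp$. Everything else is bookkeeping with the decomposition and the choice of an arbitrary (but $U$-valued) extension of $H$ off $D$.
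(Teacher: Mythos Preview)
Your proof is correct and follows essentially the same approach as the paper: both decompose $F$ via the orthogonal projection onto $U$, use Lemma~\ref{l:convexity-wrt-subspace} to reduce to scalar $\{0\}$-convexity of $\ip{e_i}{F}$, and then invoke Proposition~\ref{p:0-convexity-and-affine-functions} to produce the affine extension $G$. The only cosmetic difference is that you apply Proposition~\ref{p:0-convexity-and-affine-functions} once to the vector-valued map $P_{U^\perp}F$ (after arguing its $\{0\}$-convexity componentwise), whereas the paper applies it coordinate-by-coordinate and then assembles $G=\sum_i f^{e_i}e_i$ explicitly.
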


\begin{proof}
Suppose that $F$ is $U$-convex and   let $p:\bE_2\to U$ be  the orthogonal projection onto $U$. Define 
\[
H:\aff D\to U, \quad H(x):=\begin{cases} 
p(F(x)), & x\in D,\\
0,&\text{else}.
\end{cases}
\] 
Set $\hat G:=F-H_{|D}$. By Lemma \ref{l:convexity-wrt-subspace} and Proposition \ref{p:0-convexity-and-affine-functions}, $\ip{u}{F}_{|D}$  is the restriction of an affine function $f^u:\aff D\to\R $  for all $u\in U^\perp$. However, as $H(x)\in U\;(x\in \bE_1)$, we have $\ip{u}{\hat{G}}=\ip{u}{F}$ for all $u\in U^\perp$. Now, let  $\{e_1,\dots,e_r\}$ be an orthogonal basis of $U^\perp$. Then $\hat{G}  = \sum_{i=1}^r f^{e_i}_{|D} e_i $. Then  $G := \sum_{i=1}^r f^{e_i} e_i:\aff D\to \bE_2$ is affine and  $F=G_{|D}+H_{|D}$.

Conversely, suppose that there exists a function $H\colon\aff D\to U$ and an affine map $G\colon\ \aff D\to\bE_2$ such that $F = G_{|D} + H_{|D}$. Then for  $u\in U^\perp$ we have  $\ip{u}{F}=\ip{u}{G}_{|D}$, and thus $\ip{u}{F}$ is $\{0\}$-convex, as $\ip{u}{G}$ is affine on $\aff D\supset D$ and $D$ is convex. Thus, by Lemma \ref{l:convexity-wrt-subspace}, $F$ is $U$-convex.
\end{proof}

\subsection{Convexity with respect to a half-space and  polyhedral cones}\label{sec:PC}

\noindent
Every (proper) half-space  $H\subset \bE$ that is also a cone is of the form $H=\set{x\in \bE}{\ip{w}{x}\geq 0}$ for some $w\in \bE\setminus\{0\}$. Clearly, $H$ is (polyhedral) convex and closed with dual cone $-H^\circ=\R_+w$.

\begin{proposition}\label{prop:HS} Let $w\in\bE_2\setminus\{0\}$ and let    $H=\{x\in\bE_2\mid\ip{w}{x}\geqslant 0\}$ be the associated half-space. Then for  $F:D\to\bE_2$ with $D\subset \bE_1$ (nonempty) convex we have: 
\begin{itemize}
\item[a)] $F$ is $H$-convex if and only if $\ip{w}{F}$ is convex.
\item[b)]  $F$ is $H$-closed if $\ip{w}{F}$ is lower semicontinuous.

\end{itemize}
\end{proposition}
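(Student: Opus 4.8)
The plan is to derive both statements directly from Proposition \ref{p:lsc1} applied to the closed convex cone $K := H$. The key observation, already recorded before the statement, is that $H$ is a closed convex cone with dual cone $-H^\circ = \R_+ w$; in particular $\ri(-H^\circ) = \R_{++} w$ and $-H^\circ \setminus \{0\} = \R_{++} w$. The only real work is to translate the family of scalarization conditions indexed by $u \in \R_{++} w$ into a single condition on $\ip{w}{F}$, which hinges on the elementary fact that multiplying a function by a strictly positive scalar preserves both convexity and lower semicontinuity.

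For part a), I would invoke the equivalence i) $\Leftrightarrow$ iii) of Proposition \ref{p:lsc1} a): $F$ is $H$-convex if and only if $\ip{u}{F}$ is convex for every $u \in \ri(-H^\circ) = \R_{++} w$. Writing $u = \lambda w$ with $\lambda > 0$ and noting $\ip{\lambda w}{F} = \lambda \ip{w}{F}$, convexity of $\ip{u}{F}$ holds for all such $u$ precisely when $\ip{w}{F}$ is convex, since positive scaling preserves convexity. This yields the claimed equivalence. One should keep in mind here that $\ip{w}{F}$ is understood as the scalarization from \eqref{eq:Scalarize}, taking the value $+\infty$ off $D$, so that its convexity is convexity as an extended-real-valued function on $\bE_1$; this is meaningful because $D$ is assumed convex.

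For part b), I would appeal to Proposition \ref{p:lsc1} b): $F$ is $H$-closed provided $H \neq \bE_2$ and $\ip{u}{F}$ is lower semicontinuous for all $u \in -H^\circ \setminus \{0\} = \R_{++} w$. Since $w \neq 0$, the half-space $H$ is proper, so $H \neq \bE_2$ is automatic. As before, for $u = \lambda w$ with $\lambda > 0$ we have $\ip{u}{F} = \lambda \ip{w}{F}$, and lower semicontinuity is invariant under multiplication by a positive scalar; hence lower semicontinuity of $\ip{w}{F}$ forces lower semicontinuity of every $\ip{u}{F}$ with $u \in \R_{++} w$, and the sufficient condition of Proposition \ref{p:lsc1} b) applies.

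No genuine obstacle is expected: the statement is essentially the specialization of Proposition \ref{p:lsc1} to the case in which the dual cone is a single ray. The only point requiring a moment's care is the reduction from ``for all $u \in \R_{++} w$'' to a condition on $\ip{w}{F}$ alone, which is exactly the scaling invariance noted above; and, for part b), the explicit verification that $H \neq \bE_2$, which is precisely where the hypothesis $w \neq 0$ enters.
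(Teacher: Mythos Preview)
Your proposal is correct and follows essentially the same approach as the paper: both parts are obtained by specializing Proposition~\ref{p:lsc1} to $K=H$, using $\ri(-H^\circ)=\R_{++}w$ and $-H^\circ\setminus\{0\}=\R_{++}w$, and then reducing the family of scalarizations to the single function $\ip{w}{F}$ via positive-scaling invariance of convexity and lower semicontinuity. Your explicit remark that $w\neq 0$ ensures $H\neq\bE_2$ is exactly the point the paper also singles out for part~b).
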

\begin{proof}a)    By Proposition \ref{p:lsc1} a), $F$ is $H$-convex if and only if $\ip{u}{F}$ is convex   for all $u\in\ri(-H^\circ) = \bR_{++}w$. However, $\ip{tw}{F}$ is convex if and only if $\ip{w}{F}$ is convex for all $t>0$.
\smallskip

\noindent
b) This follows with a similar argument as in a) from Proposition \ref{p:lsc1} b) observing that $H\neq \bE_2$ and 
$-H^\circ\setminus\{0\}=\bR_{++}w$.
\end{proof}

\noindent
 Proposition \ref{prop:HS} combined with Lemma \ref{l:intersection-cone-K-convexity} yields the following result on polyhedral cones.

\begin{corollary} Let $w_1,\dots, w_l\in\bE\setminus\{0\}$ and let $P=\bigcap_{i=1}^l H_i$ with $H_i=\set{x}{\ip{w_i}{x}\geq 0}$. Then for  $F:D\to\bE_2$ with $D\subset \bE_1$ (nonempty) convex we have:
\begin{itemize}
\item[a)] $F$ is $P$-convex if and only if $\ip{w_i}{F}$ is convex for all $i=1,\dots,l$.
\item[b)] $F$ is $P$-closed if  $\ip{w_i}{F}$ is lower semicontinuous for all $i=1,\dots,l$.
\end{itemize}

\end{corollary}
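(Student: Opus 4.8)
The plan is to read $P=\bigcap_{i=1}^l H_i$ as an intersection of the half-space cones $H_i=\set{x}{\ip{w_i}{x}\geq 0}$ and then feed this decomposition into Lemma \ref{l:intersection-cone-K-convexity}, handling each half-space separately by Proposition \ref{prop:HS}. No direct computation on $P$ is needed; everything factors through the two cited results.

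For part a), I would first note that each $H_i$ is a convex cone (being a half-space whose bounding hyperplane passes through the origin). Hence the convexity hypothesis in the addendum of Lemma \ref{l:intersection-cone-K-convexity} is met, and that lemma yields the equivalence
\[
F \text{ is } P\text{-convex} \iff F \text{ is } H_i\text{-convex for all } i=1,\dots,l .
\]
Next, Proposition \ref{prop:HS} a) applied to each half-space $H_i$ (with the defining vector $w_i$) gives $F$ is $H_i$-convex if and only if $\ip{w_i}{F}$ is convex. Chaining these two equivalences delivers a). The point to be careful about is directionality: part a) is a genuine \emph{if and only if} precisely because both cited statements are equivalences in the convex setting — the converse half of Lemma \ref{l:intersection-cone-K-convexity} relies on the $H_i$ being convex, which is exactly why we recorded that observation first.

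For part b), I would argue in the single (sufficient) direction that the two cited results permit. Assuming $\ip{w_i}{F}$ is lower semicontinuous for every $i$, Proposition \ref{prop:HS} b) shows that $F$ is $H_i$-closed for each $i$. Lemma \ref{l:intersection-cone-K-convexity} then gives that $F$ is $\big(\bigcap_{i=1}^l H_i\big)$-closed, i.e. $P$-closed, which is the claim. Here part b) is stated only as a sufficient condition — not an equivalence — consistently with the fact that both Proposition \ref{prop:HS} b) and the closedness part of Lemma \ref{l:intersection-cone-K-convexity} are themselves one-directional; I would not attempt to promote it to an equivalence. The only mild obstacle in the whole argument is bookkeeping the direction of each implication so that a) comes out as an equivalence and b) as a sufficient condition, but there is no analytic difficulty beyond invoking the two earlier results.
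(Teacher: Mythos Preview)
Your proposal is correct and matches the paper's own approach exactly: the paper simply states that the corollary follows from Proposition \ref{prop:HS} combined with Lemma \ref{l:intersection-cone-K-convexity}, without giving further details. Your careful tracking of which implications are equivalences (part a)) versus one-directional (part b)) is precisely the bookkeeping the paper implicitly relies on.
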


\subsection{The smallest closed cone with respect to which \boldmath{$F$} is convex}\label{sec:SmallCvx}

%\noindent
%We now  study the question as to whether there exists a  smallest closed (and necessarily) convex cone with respect to which a function is convex. To this end, the next lemma is key.
% 
\noindent
The following result  ensures the existence of a smallest closed cone $K_F$ with respect to which $F$ is convex (hence, $K_F$  is also  convex by Proposition \ref{prop:NecK}) and characterizes its dual cone.

\begin{proposition}[The cone $K_F$ and its dual]\label{prop:KF}
Let $D\subset\bE_1$ be nonempty and   convex   and  let $F\colon D\to\bE_2$.  Then the following hold:
\begin{itemize}
\item[a)] There exists  a smallest (nonempty) closed (and convex) cone $K_F\subset\bE_2$  with respect to which $F$ is convex, i.e if $F$ is $K$-convex and $K$ is closed and convex, then $K\supset K_F$.

\item[b)] The dual cone of $K_F$ from a) is given by 
$-K_F^\circ=\lbrace u\in\bE_2 \mid \langle u,F\rangle \text{ is convex}\rbrace .$
\end{itemize}
\end{proposition}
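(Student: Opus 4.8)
The plan is to settle part a) by an intersection argument and part b) by a two-sided inclusion resting on the scalarization characterizations of $K$-convexity already established.

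For a), I would collect the relevant cones into the family
\[
\mathcal{K} := \set{K\subset\bE_2}{K \text{ closed convex cone and } F \text{ is } K\text{-convex}},
\]
and set $K_F := \bigcap_{K\in\mathcal{K}} K$. The first thing to check is that $\mathcal{K}\neq\emptyset$: taking $K=\bE_2$ gives $\Epi{\bE_2}{F}=\gph F+\{0\}\times\bE_2=D\times\bE_2$, which is convex because $D$ is, so $F$ is $\bE_2$-convex. Being an intersection of closed convex cones, $K_F$ is again a closed convex cone, and it is nonempty since every cone contains the origin. Lemma \ref{l:intersection-cone-K-convexity}, applied to the family $\mathcal{K}$, then shows that $F$ is $K_F$-convex, so $K_F\in\mathcal{K}$; and by construction $K_F\subset K$ for every $K\in\mathcal{K}$, which is exactly the asserted minimality. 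This proves a).

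For b), write $C:=\set{u\in\bE_2}{\ip{u}{F}\text{ convex}}$; the goal is the identity $-K_F^\circ=C$, which I would establish by two inclusions. For $-K_F^\circ\subset C$: since $F$ is $K_F$-convex by a) and $K_F$ is closed and convex, the implication iii)$\Rightarrow$ii) in Proposition \ref{p:lsc1} a) yields that $\ip{u}{F}$ is convex for every $u\in -K_F^\circ$, i.e. $-K_F^\circ\subset C$. For the reverse inclusion $C\subset -K_F^\circ$: fix $u\in C$. If $u=0$, then $u\in -K_F^\circ$ trivially, as a dual cone always contains the origin. If $u\neq 0$, consider the half-space cone $H_u:=\set{y}{\ip{u}{y}\geq 0}$; by Proposition \ref{prop:HS} a), convexity of $\ip{u}{F}$ means that $F$ is $H_u$-convex, so $H_u\in\mathcal{K}$ and hence $K_F\subset H_u$. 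Because polarity is order-reversing, this gives $H_u^\circ\subset K_F^\circ$, whence $-H_u^\circ\subset -K_F^\circ$; since $-H_u^\circ=\R_+u$ (recorded just before Proposition \ref{prop:HS}) we have $u\in -H_u^\circ\subset -K_F^\circ$. Combining both inclusions gives $-K_F^\circ=C$.

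The delicate points here are bookkeeping rather than conceptual. The one place where something could go wrong in a) is the passage from $K$-convexity of $F$ for each member of $\mathcal{K}$ to $K_F$-convexity for the (possibly infinite) intersection; this is precisely what Lemma \ref{l:intersection-cone-K-convexity} supplies, so no extra separation or limiting argument is needed. In the reverse inclusion of b), the main thing to watch is the orientation of the polarity: one must use that $K_1\subset K_2$ forces $-K_2^\circ\subset -K_1^\circ$, together with the explicit value $-H_u^\circ=\R_+u$, and treat $u=0$ separately since $H_u$ is only a proper half-space for $u\neq 0$. Beyond the bipolar correspondence and the scalarization results of Propositions \ref{p:lsc1} and \ref{prop:HS}, no further tools are required.
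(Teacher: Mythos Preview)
Your proof is correct. Part a) is essentially identical to the paper's argument: define $K_F$ as the intersection of all closed convex cones with respect to which $F$ is convex, note $\bE_2$ belongs to this family, and invoke Lemma~\ref{l:intersection-cone-K-convexity}.

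For part b), the paper simply cites Pennanen \cite[Lemma~6.1]{Pen 99}, whereas you supply a self-contained argument via the scalarization results the paper has already established (Propositions~\ref{p:lsc1} and~\ref{prop:HS}). This is a mild but genuine difference: your route stays internal to the paper's toolkit and makes the bipolar/half-space mechanism explicit, while the paper's citation keeps things terse at the cost of an external dependency. Substantively the arguments coincide---Pennanen's lemma is proved along the same lines---so neither buys anything the other cannot, but your version is the more transparent one for a reader who does not have \cite{Pen 99} at hand.
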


\begin{proof} a)
Define $K_F$ as the intersection of all closed and convex cones which respect to which $F$ is convex. Then $K_F$ is nonempty (as $F$ is $\bE_2$-convex and every cone contains $0$), closed and convex, and, by  Lemma \ref{l:intersection-cone-K-convexity}, $F$ is $K_F$-convex. By construction, there is no smaller cone with these properties. 
%Moreover, if $K$ is closed and convex and such that $F$ is $K$-convex, then by definition of $K_F$, $K\supset K_F$.
\smallskip

\noindent
b)  See  \cite[Lemma 6.1]{Pen 99}.
\end{proof}

%\noindent
%As an immediate consequence one obtains the well-known  characterization of $K$-convexity via scalarization.
%
%
%\begin{corollary}[$K$-convexity via scalarization]\label{cor:CvxScalar}
%Let $K\subset\bE_2$ be a closed convex cone. Then the proper function $F\colon\bE_1\to\bE_2^\bullet$ is $K$-convex if and only if  $\ip{u}{F}\in\Gamma(\bE_1)$ for all  $u\in -K^\circ$.
%\end{corollary}
%
%\begin{proof}
%As $F$ is $K$-convex and $K$ is closed and convex, then, by definition of $K_F\subset K$. Hence, by Proposition \ref{prop:KF} b), $-K^\circ\subset -K_F^\circ=\{u\mid\ip{u}{F}\in\Gamma(\bE_1)\}$.
%\end{proof}

\subsection{The smallest (closed) cone with respect to which \boldmath{$F$} is convex {\em and} closed}\label{sec:SmallClosed}

We now investigate how the  situation changes in comparison to the study in Section \ref{sec:SmallCvx} when we are looking for the smallest 
(by Proposition \ref{prop:NecK} necessarily closed and convex) cone with respect to which a function $F$ is convex {\em and} closed.    Such cone   does not need to exist, simply because a given function $F:\bE_1\to\bE_2^\bullet$ need not be $\bE_2$-closed.  More concretely, for every cone $K\subset \R$,  the indicator $F:=\delta_{(0,1]}$ has $\Epi{K}{F}=(0,1]\times K$, which is not closed.

\begin{proposition}[The cone $\hat K_F$]
\label{p:def-of-K_F^hat}
Let $D\subset\bE_1$  be nonempty and   convex  and let  $F\colon D\to\bE_2$  such that there exists a (necessarily closed  and convex) cone $K\subset\bE_2$ with respect to which $F$ is closed and convex. Then  there exists a smallest closed and convex cone $\hat{K}_F\subset \bE_2$ such that $F$ is $\hat{K}_F$-closed and $\hat{K}_F$-convex.
\end{proposition}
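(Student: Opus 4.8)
The plan is to mimic the construction of $K_F$ in Proposition \ref{prop:KF} a), the only genuine difference being that the relevant family of cones is no longer automatically nonempty and must be secured by the standing hypothesis. Concretely, I would let $\cK$ denote the collection of all closed and convex cones $K\subset\bE_2$ with respect to which $F$ is both $K$-closed and $K$-convex, and define
\[
\hat K_F:=\bigcap_{K\in\cK}K.
\]
By assumption there exists at least one cone with respect to which $F$ is closed and convex, and by Proposition \ref{prop:NecK} any such cone is necessarily closed and convex; hence $\cK\neq\emptyset$ and the intersection above is taken over a nonempty family of subsets of $\bE_2$ (so no foundational issue arises).

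Next I would verify that $\hat K_F$ has the asserted properties. As an arbitrary intersection of closed convex cones, $\hat K_F$ is again a closed convex cone, and it is nonempty since every cone contains $0$. Since $F$ is $K$-closed and $K$-convex for every $K\in\cK$, Lemma \ref{l:intersection-cone-K-convexity} applied to the family $(K)_{K\in\cK}$ — using that it preserves closedness and convexity simultaneously — yields that $F$ is $\hat K_F$-closed and $\hat K_F$-convex. Thus $\hat K_F\in\cK$. Finally, $\hat K_F\subset K$ for every $K\in\cK$ by the very definition of the intersection, so $\hat K_F$ is the smallest element of $\cK$, which is exactly the claim.

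The only point that requires care — and the reason the hypothesis is imposed — is the nonemptiness of $\cK$: without a single witnessing cone the infimum would be vacuous (compare the example $F=\delta_{(0,1]}$ preceding the statement, for which $\Epi{K}{F}$ fails to be closed for every cone $K\subset\R$). Everything else reduces to the identity $\Epi{\left(\bigcap_{K\in\cK}K\right)}{F}=\bigcap_{K\in\cK}\Epi{K}{F}$ furnished by Lemma \ref{l:intersection-cone-K-convexity}, together with the elementary facts that intersections preserve closedness and convexity. I do not anticipate a genuine obstacle beyond invoking Lemma \ref{l:intersection-cone-K-convexity} correctly for both the closedness and the convexity of the $K$-epigraph at once; in particular, no new analysis is needed relative to Proposition \ref{prop:KF}.
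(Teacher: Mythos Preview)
Your proposal is correct and is essentially the same approach as the paper's own proof, which simply states that the result ``follows readily from Lemma~\ref{l:intersection-cone-K-convexity}''. You have merely spelled out in detail the construction (intersection over the nonempty family $\cK$) and the verification that the paper leaves implicit.
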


\begin{proof} Follows readily from   Lemma \ref{l:intersection-cone-K-convexity}.
%Define $\hat{K}_F$ as the intersection of the closed and convex cones which respect to which $F$ is closed and convex. $K_F$ is well-defined as $F$ is $K$-closed and $K$-convex. According to Lemma \ref{l:intersection-cone-K-convexity}, $F$ is $\hat{K}_F$-convex and $\hat{K}_F$-closed. Moreover, if $K'$ is  such that $F$ is $K'$-convex and $K'$-closed, then by definition of $\hat{K}_F$, $K'\supset \hat{K}_F$.
\end{proof}

\noindent
In the spirit of Proposition \ref{prop:KF} b), we want to characterize the dual cone of $\hat K_F$ (if it exists). To this end, the following lemma is useful.

\begin{lemma}\label{lem:Kbar} Let $F:D\subset\bE_1\to\bE_2$ with $D$ convex (and nonempty). Then  the following hold:
\begin{itemize}
\item[a)] The set
$
\cl\set{u\in \bE_2}{\ip{u}{F}\in \Gamma_0(\bE_1)} 
$
is either empty or a closed convex cone. 
\item[b)]  The set $\set{u\in \bE_2}{\ip{u}{F}\in \Gamma_0(D)}$ is a convex cone (in particular nonempty).
\end{itemize}
\end{lemma}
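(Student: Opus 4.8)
The plan is to establish both parts by showing that the relevant sets are closed under nonnegative scaling and under addition, which together with the trivial observation that scalarization is linear in $u$ gives the cone and convexity properties. The key algebraic facts I would lean on are the linearity $\ip{su+tv}{F}=s\ip{u}{F}+t\ip{v}{F}$ (valid on $D=\dom F$) and the fact that nonnegative combinations of convex (resp.\ lsc) functions remain convex (resp.\ lsc). The subtlety distinguishing a) from b) is that membership is governed by $\Gamma_0(\bE_1)$ versus $\Gamma_0(D)$; the former requires global lower semicontinuity and hence the set need not be closed before taking the closure, whereas the latter is the more forgiving $D$-relative notion.

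For part b), I would argue directly that $S:=\set{u\in \bE_2}{\ip{u}{F}\in \Gamma_0(D)}$ is a convex cone. First, $0\in S$ since $\ip{0}{F}\equiv 0$ on $D$ is affine and $D$-closed (it is continuous on $D$), so $S$ is nonempty. For the cone property, fix $u\in S$ and $\lambda\ge 0$. If $\lambda=0$ we are back to the zero function; if $\lambda>0$ then $\ip{\lambda u}{F}=\lambda\ip{u}{F}$, and scaling by a positive constant preserves convexity, properness, and $D$-closedness (via the sequential characterization $\liminf_k \ip{u}{F}(x_k)\ge \ip{u}{F}(\bar x)$ for $\{x_k\in D\}\to\bar x\in D$, stated in the preliminaries). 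For additivity, take $u,v\in S$. Then $\ip{u+v}{F}=\ip{u}{F}+\ip{v}{F}$ is convex as a sum of convex functions, proper since both summands are real-valued on the common domain $D$, and $D$-closed because the liminf of a sum is bounded below by the sum of liminfs. Hence $S$ is closed under nonnegative combinations, i.e.\ a convex cone.

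For part a), the same linearity shows that $T:=\set{u\in \bE_2}{\ip{u}{F}\in \Gamma_0(\bE_1)}$ is closed under nonnegative scaling and addition by the identical reasoning, now using global lower semicontinuity in place of $D$-closedness. Thus $T$ is itself a convex cone whenever it is nonempty. Taking its closure $\cl T$ then yields a \emph{closed} convex cone: the closure of a convex cone is again a convex cone (scalar multiplication and addition are continuous, and the closure of a convex set is convex, cf.\ \cite[\S 6]{Roc 70}). The statement is phrased as ``either empty or a closed convex cone'' precisely because $T$ may be empty---for instance if no nonzero scalarization is globally lsc---and in that degenerate case $\cl T=\cl\emptyset=\emptyset$.

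The routine calculations (linearity of $u\mapsto\ip{u}{F}$, and that convexity/lsc/$D$-closedness survive sums and nonnegative scalings) are all elementary, so \textbf{the main point to get right} is the asymmetry between the two parts: why part b) produces a cone \emph{without} needing a closure while part a) requires one. The reason is that $\Gamma_0(D)$-membership is automatically ``closed'' in the appropriate sense so $S$ comes out closed-convex-conic on the nose, whereas $\Gamma_0(\bE_1)$-membership (global lsc) can fail to be preserved in the limit over $u$, so $T$ need not be closed and one must pass to $\cl T$. I would make sure to flag that this is exactly why the closure appears in a) but not in b), and to handle the empty case explicitly in a).
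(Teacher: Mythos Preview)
Your approach is essentially the same as the paper's, and the core arguments are correct: linearity of $u\mapsto\ip{u}{F}$ together with stability of convexity and (relative) lower semicontinuity under positive scaling and addition. Your treatment of part b) is in fact slightly cleaner than the paper's, which verifies additivity via an epigraph-sequence argument rather than your direct use of $\liminf(a_k+b_k)\ge\liminf a_k+\liminf b_k$ (valid here since $D$-closedness forces each liminf to be $\ge$ a finite value).

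There is one genuine slip in part a). You assert that ``$T$ is itself a convex cone whenever it is nonempty,'' but in the paper's convention a cone must contain $0$, and $0\in T$ would require $\ip{0}{F}=\delta_D\in\Gamma_0(\bE_1)$, i.e.\ $D$ closed. For example, take $D=(0,1)$ and $F(x)=1/x+1/(1-x)$: then $T=(0,\infty)$, which is not a cone. The paper avoids this by only claiming that $S$ is \emph{convex} (true without $0$) and then showing $\cl S$ is a cone via approximation with strictly positive scalars. Your conclusion about $\cl T$ is still correct for exactly this reason---nonemptiness plus closure under positive scaling forces $0\in\cl T$---but the intermediate statement is false as written.

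A smaller point: your final paragraph claims that in b) the set ``comes out closed-convex-conic on the nose.'' The lemma asserts only that it is a convex cone, not that it is closed, and indeed it need not be (this is precisely the content of Corollary~\ref{c:dual-cone-of-K_F^hat-part-2} later in the paper). The asymmetry between a) and b) is not that one set is automatically closed and the other is not; it is that a) explicitly asks for a \emph{closed} cone, hence the closure, while b) asks only for a convex cone.
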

\begin{proof}a) Assume nonemptiness, in which case  $S:=\set{u\in \bE_2}{\ip{u}{F}\in \Gamma_0(\bE_1)}$ is also nonempty. Then, clearly,  $S$ is convex and, consequently, so is $\cl S$. Moreover, for  $u\in \cl S$ and $\lambda\geq  0$ there exist $\{u_k\in S\}\to u$ and $\{\lambda_k>0\}\to \lambda$ and with $\lambda_k u_k\in S$, hence $\lambda u\in \cl S$, and thus $\cl S$ is a closed convex cone.
\smallskip

\noindent
b) Set $K:=\set{u\in \bE_2}{\ip{u}{F}\in \Gamma_0(D)}$. We first show that $K$ is a cone. To this end, note that $\ip{0}{F}=\delta_{D}$, whose epigraph $D\times \R_+$ is clearly closed  in the topology induced by $D\times \R$. Now, for $u\in K$  and  $\lambda>0$ observe that 
$\epi \ip{\lambda u}{F}=L_\lambda^{-1}(\epi \ip{u}{F})$ for $L_\lambda:(x,\alpha)\mapsto (x,\alpha/\lambda)$, which is then closed as the linear preimage of a closed set in the topology induced by $D\times \R$. Therefore, $K$ is a cone. 
In order  prove that $K$ is convex, it hence suffices to show that $K+K\subset K$, see \cite[Exercise 3.7]{RoW 98}.  To this end, let $u,v\in K$ and take $\{(x_k,\alpha_k)\in \epi \ip{u+v}{F}\}\to (x,\alpha)\in D\times \R$.  In particular, we have $(x_k,\alpha_k-\ip{v}{F(x_k)})\in \epi \ip{u}{F}$ for all $k\in\bN$. Let $z:=\liminf_k \ip{v}{F(x_k)}$.  W.l.o.g. $z=\lim_k \ip{v}{F(x_k)}$ (otherwise go to subsequence), and by $D$-closedness of $\ip{v}{F}$, we find $(x,z)\in \epi \ip{v}{F}$, i.e. $\ip{v}{F(x)}\leq z$. Moreover, by $D$-closedness of $\ip{u}{F}$ we find that $(x,\alpha-z)\in \epi \ip{u}{F}$, hence
$\alpha\geq \ip{u}{F(x)}+z\geq \ip{u}{F}(x)+ \ip{v}{F}(x)$. Therefore, $(x,\alpha)\in \epi \ip{u+v}{F}$.
\end{proof}

\begin{proposition}[The dual cone of $\hat K_F$]
\label{p:dual-cone-of-K_F^hat}
Let  $F\colon D\subset\bE_1\to\bE_2$ with $D$ nonempty and  convex. If  $\hat{K}_F$ (in the sense of Proposition \ref{p:def-of-K_F^hat}) exists, we have
\begin{equation}
-\hat{K}_F^\circ=\cl\lbrace u\in\bE_2 \mid \langle u,F\rangle \in\Gamma_0(\bE_1)\rbrace   =\cl \lbrace u\in\bE_2 \mid \langle u,F\rangle \in\Gamma_0(
D)\rbrace .
\end{equation}
\end{proposition}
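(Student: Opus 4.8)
The plan is to establish the two equalities in the claim by relating the cone $\hat{K}_F$ to the convex cones appearing on the right-hand side. The natural guess, echoing the characterization of $-K_F^\circ$ in Proposition \ref{prop:KF} b), is that the dual cone of $\hat{K}_F$ collects precisely the scalarizations $\ip{u}{F}$ which are closed {\em and} convex. I would therefore first introduce the two sets
\[
S:=\set{u\in\bE_2}{\ip{u}{F}\in\Gamma_0(\bE_1)},\qquad T:=\set{u\in\bE_2}{\ip{u}{F}\in\Gamma_0(D)},
\]
noting that by Lemma \ref{lem:Kbar} the set $\cl S$ is a closed convex cone (nonempty since $\hat K_F$ exists) and $T$ is a convex cone. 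The strategy splits into proving $-\hat K_F^\circ = \cl S$ and the auxiliary identity $\cl S=\cl T$.

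\textbf{The inclusion $-\hat K_F^\circ\supset\cl S$.} For this I would take $u\in S$, so $\ip{u}{F}\in\Gamma_0(\bE_1)$, and apply the ``in particular'' clause of Proposition \ref{p:lsc1}: since $\ip{u}{F}$ is closed and convex, $F$ is $(\R_+u)^{\circ\circ}$-convex and closed — more usefully, $\ip{u}{F}$ being in $\Gamma_0(\bE_1)$ feeds directly into the scalarization criterion to show $u\in-\hat K_F^\circ$. Concretely, $\hat K_F$ being the {\em smallest} closed convex cone w.r.t.\ which $F$ is closed and convex means every such $u$ must lie in $-\hat K_F^\circ$; one checks that $\ip{u}{F}\in\Gamma_0(\bE_1)$ forces $u\in-\hat K_F^\circ$ because otherwise a proper supporting argument (as in the proof of Proposition \ref{p:lsc1} b)) produces a scalarization failing lower semicontinuity. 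Taking closures (using that $-\hat K_F^\circ$ is closed) gives $\cl S\subset-\hat K_F^\circ$.

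\textbf{The reverse inclusion $-\hat K_F^\circ\subset\cl S$.} Here I would argue that $\hat K_F$ is characterized by $-\hat K_F^\circ=\clco\set{u}{u\in\text{(valid scalarizations)}}$, and that the valid scalarizations are exactly those giving $\ip{u}{F}\in\Gamma_0(\bE_1)$; the bipolar theorem then closes the loop. Since $\hat K_F$ is the intersection of all admissible cones and each admissible cone $K$ satisfies $-K^\circ\subset-\hat K_F^\circ$, polarity reversal combined with Proposition \ref{p:lsc1} identifies $-\hat K_F^\circ$ as the closed convex cone generated by $S$, i.e.\ $\cl S$.

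\textbf{The equality $\cl S=\cl T$.} Since $\Gamma_0(\bE_1)\subset\Gamma_0(D)$ by Remark \ref{rem:CloCont}, we have $S\subset T$, hence $\cl S\subset\cl T$. The reverse inclusion is the main obstacle: given $u\in T$, so $\ip{u}{F}$ is merely $D$-closed (closed only in the subspace topology of $D\times\R$), I must approximate $u$ by elements of $S$ whose scalarizations are closed on all of $\bE_1$. The idea is that $D$-closedness plus convexity means $\ip{u}{F}$ and its full closure $\cl\ip{u}{F}\in\Gamma_0(\bE_1)$ agree on $\dom\ip{u}{F}=D$ by Lemma \ref{l:f(x)=clf(x)-iff-f-in-Gamma_0(D)}, so the failure of global closedness lives only on $\bd D$; a perturbation of $u$ into the interior of $S$ (using that $\cl S$ is a cone and that $T$ is a convex cone sitting between $S$ and $\cl S$) should recover closedness in the limit. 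This limiting/approximation step, reconciling the subspace-topology closedness with global lower semicontinuity, is where the real work lies and where Lemma \ref{lem:Kbar} b)'s convexity of $T$ together with Lemma \ref{l:f(x)=clf(x)-iff-f-in-Gamma_0(D)} must be deployed carefully.
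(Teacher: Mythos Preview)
Your proposal has two genuine gaps.

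\textbf{The inclusion $-\hat K_F^\circ\subset\cl S$.} Your argument here is essentially a restatement of the claim: you say $-\hat K_F^\circ$ ``is the closed convex cone generated by $S$'' without explaining why any $u\in-\hat K_F^\circ$ (or at least any $u\in\ri(-\hat K_F^\circ)$) produces a \emph{globally} lower semicontinuous scalarization $\ip{u}{F}$. Proposition~\ref{p:lsc1} only gives the converse direction. The paper supplies this step by invoking Pennanen \cite[Corollary 7.4(ii)]{Pen 99}, which asserts precisely that $\ip{u}{F}\in\Gamma_0(\bE_1)$ for all $u\in\ri(-\hat K_F^\circ)$ whenever $F$ is $\hat K_F$-closed and $\hat K_F$-convex; taking closures then gives the inclusion. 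You need this (nontrivial) external input or an equivalent argument.

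\textbf{The inclusion $\cl T\subset\cl S$.} Your plan is circular: you want to perturb $u\in T$ ``into the interior of $S$'' using that ``$T$ is a convex cone sitting between $S$ and $\cl S$'', but the containment $T\subset\cl S$ is exactly what you are trying to prove. The paper's route is entirely different and does not attempt to approximate elements of $T$ by elements of $S$. Instead it sets $K:=-T^\circ$ (so $-K^\circ=\cl T$) and shows directly that $F$ is $K$-convex and $K$-closed; minimality of $\hat K_F$ then forces $\hat K_F=K$, i.e.\ $-\hat K_F^\circ=\cl T$. The key point in establishing $K$-closedness is that, from the first identity already proved, one has $K\subset\hat K_F$; hence any limit of a sequence in $\Epi{K}{F}$ automatically lies in the closed set $\Epi{\hat K_F}{F}$, so its first coordinate is in $D$, and the $D$-closedness of the scalarizations $\ip{u}{F}$ for $u\in\ri(-K^\circ)$ can then be brought to bear. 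Your approach does not access this mechanism and, as written, does not close the gap.

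For the inclusion $\cl S\subset-\hat K_F^\circ$ your intuition (contradict minimality) is correct, though the sentence about ``producing a scalarization failing lower semicontinuity'' is not the right contradiction; the paper takes $u_0\in S\setminus(-\hat K_F^\circ)$, forms the half-space $L=-(\R_+u_0)^\circ$, uses Proposition~\ref{p:lsc1} to see $F$ is $L$-closed and $L$-convex, and observes $\hat K_F\cap L\subsetneq\hat K_F$, contradicting minimality.
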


\begin{proof}  Set $\bar K:=\cl(\lbrace u\in\bE_2 \mid \langle u,F\rangle \in\Gamma_0(\bE_1)\rbrace  ).$  To prove the first identity, use
  \cite[Corollary 7.4(ii)]{Pen 99}, to find $\langle u,F\rangle\in\Gamma_0(\bE_1)$ for all all $u\in\ri (-\hat{K}_F^\circ)$. Consequently, also using \cite[Theorem 6.2]{Roc 70}, we have  $-\hat{K}_F^\circ\subset\bar K$. 
Now, assume that the converse inclusion were false, and consequently (by what was already proved) $-\hat{K}_F^\circ\subsetneq\bar K$.  Then, by definition of $\bar K$, there must acutally exist $u_0\in \lbrace u\in\bE_2 \mid \langle u,F\rangle \in\Gamma_0(\bE_1)\rbrace\setminus (-\hat{K}_F^\circ)$ (since otherwise $-\hat{K}_F^\circ=\bar K$, in contradiction to the assumption). In particular, $u_0\neq 0$ and,   by positive homogeneity, 
we have $\ip{tu_0}{F}\in\Gamma_0(\bE_1)$ for all $t>0$, thus $\ip{u}{F}\in\Gamma_0(\bE_1)$ for all $u\in \R_+u_0\setminus \{0\}$. Therefore, by Proposition \ref{p:lsc1}, with $L:=-(\R_+u_0)^\circ\subsetneq \bE_2$, we find that  $F$ is $L$-convex and $L$-closed. Thus, by Lemma \ref{l:intersection-cone-K-convexity}, we find that  $F$ is $(\hat{K}_F\cap L)$-closed and -convex. However, since $u_0\notin -\hat K_F^\circ$, we cannot have $L\supset \hat K_F$, and hence $\hat{K}_F\cap L\subsetneq \hat{K}_F$. This contradicts the definition of $\hat{K}_F$ as the smallest closed, convex cone with respect to which $F$ is convex and closed, and thus we have proved the first identity.

To prove the second identity  we first note  that, from what was proved above and by Remark \ref{rem:CloCont}, we have 
$
-\hat{K}_F^\circ = \cl(\lbrace u\in\bE_2 \mid \langle u,F\rangle \in\Gamma_0(\bE_1)\rbrace  ) \subset \cl\lbrace u\in\bE_2 \mid \langle u,F\rangle \in\Gamma_0(D)\rbrace .
$
In order to prove the converse inclusion, set $K:=-\lbrace u\in\bE_2 \mid \langle u,F\rangle \in\Gamma_0(
D)\rbrace^\circ$, and observe that  by Lemma \ref{lem:Kbar} b) and  \cite[Theorem 6.3]{Roc 70}, respectively, we have
\begin{equation}\label{eq:ConeCont}
-K^\circ=\cl\lbrace u\in\bE_2 \mid \langle u,F\rangle \in\Gamma_0(D)\rbrace\AND \ri(-K^\circ)\subset \lbrace u\in\bE_2 \mid \langle u,F\rangle \in\Gamma_0(D)\rbrace.
\end{equation}
We now show that $F$ is $K$-convex: To this end, first note that  $\ip{u}{F}$ is convex for all $u\in\ri(-K^\circ)$,   by \eqref{eq:ConeCont}.
Any $u\in-K^\circ$ is a limit $\{u_k\in \ri(-K^\circ)\}\to u$, and therefore $\ip{u}{F}$ is the pointwise limit of convex functions $\ip{u_k}{F},$ and hence convex (Lemma \ref{lem:Ptw}). Thus, by Proposition \ref{p:lsc1} a), $F$ is $K$-convex.

We now prove that $F$ is also $K$-closed: To this end, let $\{(x_k,y_k)\in \Epi{K}{F} \}\to (x,y)$. In particular, there exists $\{v_k\in K\}$ such that
$F(x_k)+v_k=y_k$ for all $k\in \bN$. Moreover, as $K\subset \hat K_F$ (see above)  and $\Epi{\hat K_F}{F}$ is closed (by definition),  we have $(x,y)\in\Epi{\hat K_F}{F}$, and consequently,  $x\in D$. 
%and there exists $v\in \hat K_F$ such that $F(x)+v=y$. 
Thus we can use the fact that, by \eqref{eq:ConeCont}, $\ip{u}{F}$ is $D$-closed for all $u\in \ri(-K^\circ)$, and hence
\[
\ip{u}{y}=\lim_{k\to\infty} \ip{u}{y_k} = \lim_{k\to\infty} \ip{u}{F(x_k)}+\underbrace{\ip{u}{v_k}}_{\geqslant 0}\geqslant \liminf_{k\to\infty} \ip{u}{F}(x_k) \geqslant \ip{u}{F}(x) 
\]
for all $u\in \ri(-K^\circ)$. Now, every $u\in -K^\circ$ is a limit $\{u_k\in -\ri(K^\circ)\}\to u$ with $\ip{u_k}{y}\geq \ip{u_k}{F(x)}$, and hence $\ip{u}{y}\geq \ip{u}{F(x)}$. As $u\in-K^\circ$ was arbitrary, this shows  that $y\geq_K F(x)$, and thus $(x,y)\in \Epi{K}{F}$, which shows that $\Epi{K}{F}$ is closed and hence $F$ is $K$-closed (and $K$-convex as proved earlier). Since $K\subset \hat K_F$ it follows that $\hat K_F=K$, which concludes the proof.
\end{proof}

\noindent
The natural question as to when closures in the above result are superfluous is addressed after the following auxiliary result.

\begin{lemma}
\label{l:ball-in-affine-space} Let
 $\{e_1,\dots,e_n\}\subset\bE$ be an  orthonormal system and $A\subset\bE$  an affine set  such that $\para A=\lin\{e_1,\dots,e_n\}$. Then, for all $r>0$ and all $x\in A$, we have 
$
B_{\frac{r}{n}}(x)\cap A\subset\co\set{x\pm re_i}{i=1,\dots,n}.
 $
\end{lemma}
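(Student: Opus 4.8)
The plan is to reduce the statement to an explicit construction of a convex combination, exploiting that every point of $A$ near $x$ differs from $x$ by a vector lying in $\para A=\lin\{e_1,\dots,e_n\}$, so that the orthonormality of the $e_i$ turns the metric hypothesis into a usable coordinate bound.

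First I would fix $y\in B_{r/n}(x)\cap A$. Since $x,y\in A$, the difference $y-x$ lies in $\para A=\lin\{e_1,\dots,e_n\}$, so there are scalars $\alpha_1,\dots,\alpha_n\in\bR$ with $y-x=\sum_{i=1}^n\alpha_ie_i$. Orthonormality of the $e_i$ gives $\|y-x\|^2=\sum_i\alpha_i^2$, and in particular $|\alpha_i|\le\|y-x\|\le r/n$ for each $i$. Summing yields the key estimate $\sum_{i=1}^n|\alpha_i|\le n\cdot (r/n)=r$, i.e. $\sum_i|\alpha_i|/r\le 1$; this is exactly the inequality that the choice of radius $r/n$ is tuned to produce.

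Next I would exhibit $y$ as a convex combination of the $2n$ points $x\pm re_i$. Consider a candidate $y=\sum_{i=1}^n\big(\lambda_i^+(x+re_i)+\lambda_i^-(x-re_i)\big)$. Matching against the unique decomposition $y=x+\sum_i\alpha_ie_i$ (unique since the $e_i$ are independent and $x$ is a fixed base point of $A$), the $x$-component forces the convexity constraint $\sum_i(\lambda_i^++\lambda_i^-)=1$, while the $e_i$-components force $\lambda_i^+-\lambda_i^-=\alpha_i/r$. I would therefore set $\lambda_i^\pm=\tfrac12(s_i\pm\alpha_i/r)$ with weights $s_i:=|\alpha_i|/r+\tfrac1n\big(1-\sum_j|\alpha_j|/r\big)$. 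The estimate from the previous step guarantees $1-\sum_j|\alpha_j|/r\ge 0$, so each $s_i\ge |\alpha_i|/r\ge 0$; this makes all $\lambda_i^\pm$ nonnegative, and by construction $\sum_i s_i=1$. A direct substitution then confirms that this combination equals $x+\sum_i\alpha_ie_i=y$, placing $y$ in $\co\set{x\pm re_i}{i=1,\dots,n}$.

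I do not expect a serious obstacle: the entire content is the bound $\sum_i|\alpha_i|\le r$, after which the rest is bookkeeping to confirm nonnegativity and normalization of the weights. The one point requiring minor care is ensuring the $s_i$ can simultaneously be nonnegative, dominate $|\alpha_i|/r$, and sum to $1$; this is precisely where $\sum_i|\alpha_i|/r\le 1$ is used, and it is the crux of the argument.
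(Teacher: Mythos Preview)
Your proof is correct and follows essentially the same approach as the paper's: both decompose $y-x$ in the orthonormal basis, use the bound $|\alpha_i|\le r/n$ to obtain $\sum_i|\alpha_i|\le r$, and then exhibit $y$ as an explicit convex combination of the points $x\pm re_i$. The only cosmetic difference is that the paper places all of the slack $1-\sum_i|\alpha_i|/r$ on the single pair $x\pm re_1$, whereas you distribute it evenly across all $n$ pairs via your weights $s_i$.
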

\begin{proof}
Let $y\in B_\frac{r}{n}(x)\cap A$, i.e.   $y=x+\sum_{i=1}^n y_i e_i$ for some  $y_1,\dots,y_n\in \R\;(i=1,\dots,n)$ 
and 
$
\| y-x \|^2 \leqslant\frac{r^2}{n^2}.
$
As $e_1,\dots,e_n$ is an orthonormal system, we have 
\begin{equation}\label{eq:AuxCvx}
 \sum_{i=1}^n y_i^2=\left\|\sum_{i=1}^n y_i e_i\right\|^2=\|y-x\|^2\leq \frac{r^2}{n^2}.
\end{equation}
On the other hand, we have
\begin{align*}
y & = x +\sum_{i=1}^n y_i e_i \\
& = x+\frac{1}{2}\left(1-\sum_{i=1}^n\frac{|y_i|}{r}\right) \underbrace{(r e_1 -r e_1)}_{=0} + \sum_{i=1}^n \frac{|y_i|}{r}(\mbox{sgn}(y_i)r e_i) \\
& =\frac{1}{2}\left(1-\sum_{i=1}^n\frac{|y_i|}{r}\right) (x+r e_1) + \frac{1}{2}\left(1-\sum_{i=1}^n\frac{|y_i|}{r}\right)(x -r e_1) + \sum_{i=1}^n \frac{|y_i|}{r}(x+\mbox{sgn}(y_i)r e_i). 
\end{align*}
In view of \eqref{eq:AuxCvx},   $|y_i|\leqslant\frac{r}{n}$ for all $i=1,\dots,n$, and thus $y \in \co\set{x\pm re_i}{i=1,\dots,n}$, which concludes the proof.
\end{proof}

\begin{corollary}
\label{c:dual-cone-of-K_F^hat-part-2}
Let  $F\colon D\subset\bE_1\to\bE_2$ with $D$ nonempty and  convex and assume that $\hat K_F$ exists.  Then:
\begin{enumerate}
\item[a)] $-\hat{K}_F^\circ=\{u\in\bE_2\mid\ip{u}{F}\in\Gamma_0(D)\}$ if and only if $\{u\in\bE_2\mid\ip{u}{F}\in\Gamma_0(D)\}$ is closed.
\item[b)] If, for every  sequence $\{x_k\in D\}\to x\in D$ (and every $x\in D$), there exists $v\in\ri(-\hat{K}_F^\circ)$ for which $\{\ip{v}{F}(x_k)\}$ does not tend to $+\infty$, then  $\{u\in\bE_2\mid\ip{u}{F}\in\Gamma_0(D)\}$ is closed.
\item[c)] If,  for all $x\in D\setminus\ri D$, there exists a neighborhood $\cN_x$ of $x$, and a  continuous 
$\hat{K}_F$-majorant\footnote{We refer the reader to Definition \ref{def:KMin} for a formal introduction.} of $F$  on $\cN_x\cap D$, then $\{u\in\bE_2\mid\ip{u}{F}\in\Gamma_0(D)\}$ is closed. 
\end{enumerate}
In particular,  $-\hat{K}_F^\circ=\{u\in\bE_2\mid\ip{u}{F}\in\Gamma_0(D)\}$ if $D$ is relatively open (e.g. affine).

\end{corollary}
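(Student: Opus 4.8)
Throughout write $S:=\set{u\in\bE_2}{\ip{u}{F}\in\Gamma_0(D)}$, so that Proposition~\ref{p:dual-cone-of-K_F^hat} reads $-\hat K_F^\circ=\cl S$. Part~a) is then immediate: $-\hat K_F^\circ=S$ holds if and only if $S=\cl S$, i.e. if and only if $S$ is closed. So the real content is in b), which in turn subsumes c) and the concluding ``in particular'' clause.

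The plan for b) is to upgrade the inclusion $S\subset-\hat K_F^\circ$ (contained in Proposition~\ref{p:dual-cone-of-K_F^hat}) to the reverse inclusion $-\hat K_F^\circ\subset S$, which forces $S=-\hat K_F^\circ$, in particular $S$ closed. Since $F$ is $\hat K_F$-convex, Proposition~\ref{p:lsc1}~a) shows $\ip{u}{F}$ is convex and proper for each $u\in-\hat K_F^\circ$, so the only thing left to check is that $\ip{u}{F}$ is $D$-closed. Fix such a $u$ and any $v\in\ri(-\hat K_F^\circ)$. As $S$ is a convex cone with $\cl S=-\hat K_F^\circ$ (Lemma~\ref{lem:Kbar}~b)), we have $\ri(-\hat K_F^\circ)=\ri S\subset S$, and Lemma~\ref{l:ri-of-convex-cone} gives $u+tv\in\ri(-\hat K_F^\circ)\subset S$ for every $t>0$; hence each $\ip{u+tv}{F}$ is $D$-closed. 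I would then argue by contradiction: if $\ip{u}{F}$ were not $D$-lsc, there would be $\{x_k\in D\}\to x\in D$ with $\ip{u}{F}(x_k)\to L<\ip{u}{F}(x)$. Applying the hypothesis to this very sequence yields $v\in\ri(-\hat K_F^\circ)$ along which $\ip{v}{F}(x_k)\not\to+\infty$; combined with the lower bound coming from $D$-lower semicontinuity of $\ip{v}{F}$, a further subsequence has $\ip{v}{F}(x_k)\to\beta\in\R$. Evaluating the $D$-closedness inequality for $\ip{u+tv}{F}$ along this subsequence gives $L+t\beta\geq\ip{u}{F}(x)+t\ip{v}{F}(x)$, and letting $t\downarrow 0$ produces $L\geq\ip{u}{F}(x)$, the desired contradiction.

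For c) the plan is simply to verify the hypothesis of b) for an arbitrary fixed $v\in\ri(-\hat K_F^\circ)$. For $x\in\ri D$ the convex function $\ip{v}{F}$ is continuous relative to $D$, so $\ip{v}{F}(x_k)\to\ip{v}{F}(x)\in\R$ for every $\{x_k\in D\}\to x$. For $x\in D\setminus\ri D$, let $g$ be the continuous $\hat K_F$-majorant of $F$ on $\cN_x\cap D$; since $g(y)-F(y)\in\hat K_F$ and $v$ lies in the dual cone $-\hat K_F^\circ$, one has $\ip{v}{F}(y)\leq\ip{v}{g}(y)$ on $\cN_x\cap D$, and continuity of $\ip{v}{g}$ bounds $\ip{v}{F}(x_k)$ from above along any $\{x_k\in D\}\to x$, so it cannot tend to $+\infty$. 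Thus b) applies and $S$ is closed. The final claim follows at once: if $D$ is relatively open (e.g. affine), then $D\setminus\ri D=\emptyset$, the hypothesis of c) is vacuous, $S$ is closed, and a) yields $-\hat K_F^\circ=S$.

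The routine parts are a) and the reduction of membership in $S$ to $D$-closedness. The main obstacle is the limiting argument in b): one must apply the contradiction hypothesis to the \emph{offending} subsequence rather than the original sequence, then pass to a further subsequence on which both $\ip{u}{F}$ and $\ip{v}{F}$ converge, and only afterwards send $t\downarrow 0$. Keeping this order of quantifiers and limits straight, and ensuring the lower bound on $\ip{v}{F}$ is in hand before extracting the convergent subsequence, is the delicate point.
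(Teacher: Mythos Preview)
Your proof is correct and follows essentially the same approach as the paper: part a) is identical, part b) is the same perturbation argument $u\mapsto u+tv$ with $t\downarrow 0$ (you frame it by contradiction, the paper directly bounds the $\liminf$), and part c) reduces to b) in the same way. The only cosmetic difference is that for $x\in\ri D$ in c) you invoke the standard continuity of convex functions on the relative interior of their domain, whereas the paper spells this out via Lemma~\ref{l:ball-in-affine-space}; your shortcut is fine.
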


\begin{proof} a) Follows readily from Proposition \ref{p:dual-cone-of-K_F^hat}.
\smallskip

\noindent
b) Denote $K=\{u\in\bE_2\mid\ip{u}{F}\in\Gamma_0(D)\}$, which is a convex cone by Lemma \ref{lem:Kbar} b). By Proposition \ref{p:dual-cone-of-K_F^hat}, $-\hat{K}_F^\circ = \cl K $. Consider $u\in\cl K$. Then $ \ip{u}{F}$ is convex by Lemma \ref{lem:Ptw} and   proper with $\dom \ip{u}{F}=D$.

Now let  $x\in D$ and  $\{x_k\in D\}\to x\in D$. Then, by assumption, there exists $v\in\ri(-\hat{K}_F^\circ)=\ri K$ such that $\{\ip{v}{F}(x_k)\}$ is uniformly  bounded away from $+\infty$, hence  w.l.o.g. we can assume that $\ip{v}{F}(x_k)\to r<+\infty$.  As $\ip{v}{F}\in\Gamma_0(D)$, we have  $-\infty<\ip{v}{F}(x)\leq r<+\infty$.  Using Lemma \ref{l:ri-of-convex-cone} (and that $\ri K$ is a pre-cone), we have $u+tv\in\ri K$, and  hence $\ip{u+v}{F}\in\Gamma_0(D)$ for all $t>0$. Thus
\begin{align*}
\liminf_{k\to\infty} \ip{u}{F}(x_k) & = \liminf_{k\to\infty} \ip{u+tv}{F}(x_k) - t\ip{v}{F}(x_k) \\ & =\liminf_{k\to\infty}\ip{u+tv}{F}(x_k) - tr \\ & \geqslant \ip{u+tv}{F}(x)-tr .
\end{align*}
Letting  $t\to 0$ gives $\liminf_{k\to\infty} \ip{u}{F}(x_k)\geqslant\ip{u}{F}(x)$, hence $\ip{u}{F}\in\Gamma_0(D)$ as desired.
\smallskip

\noindent
c) Let $\{x_k\in D\}\to x\in D$.   We will show that   $\ip{v}{F}(x_k)$ does not tend to $+\infty$, for any $v\in\ri(-\hat{K}_F^\circ)$, which by b) then gives the desired conclusion.

First, suppose that  $x\in\ri D$ and set $A:=\aff D $. Let $\{e_1,\dots,e_n\}\subset \bE_1$ be  an orthonormal system  such that $A=x+\lin\{e_1,\dots,e_n\}$. Now, $x\in\ri D $ implies that there exists $\varepsilon >0$  such that $x\pm\varepsilon e_i \in D$ for all $i=1,\dots,n$. As $D$ is  convex, we have $\co\set{x\pm\varepsilon e_i}{i=1,\dots,n}\subset D$. As $x_k\to x$, then $x_k\in B_{\frac{\varepsilon}{n}}(x)$  for $k\in\bN$ large enough. As $x_k\in D$, we  have  $x_k\in A$ for all $k\in\bN$. Hence, , by Lemma \ref{l:ball-in-affine-space},  we have  $x_k\in B_{\frac{\varepsilon}{n}}(x)\cap A\subset\co\set{x\pm\varepsilon e_i}{i=1,\dots,n}\subset D$  for $k\in\bN$ large enough.
Now let $v\in\ri(-\hat{K}_F^\circ)=\ri\{u\in\bE_2\mid\ip{u}{F}\in\Gamma_0(D)\}$. Then, $\ip{v}{F}$ is convex, so for $k$ large enough, we have  $\ip{v}{F}(x_k)\leqslant\max\set{\ip{v}{F}(x\pm\varepsilon e_i)}{i=1,\dots,n}<+\infty$. Hence $\ip{v}{F}(x_k)$ does not tend to $+\infty$. 

In turn, for $x\in D\setminus\ri D$ let $G_x$ be the  continuous $\hat{K}_F$-majorant of $F$ on $\mathcal{N}_x$ and  let $v\in\ri(-\hat{K}_F^\circ)$. Then, as $G_x(y)\geqslant_K F(y)$ for all $y\in \mathcal{N}_x$, hence $\ip{v}{G_x} (y) \geqslant \ip{v}{F} (y)$ for all $y\in \mathcal{N}_x$. Since   $\{x_k\in D\}\to x$, we have that  $x_k\in\mathcal{N}_x$ for $k$ sufficiently large, and thus  $\ip{v}{F}(x_k)\leqslant\ip{v}{G_x}(x_k)$. However, $G_x$ is continuous, so $\ip{v}{G_x}$ is continuous as well,  hence $\ip{v}{G_x}(x_k)\to \ip{v}{G_x}(x)<+\infty$, thus $\ip{v}{F}(x_k)$ does not tend to $+\infty$. 
\end{proof}

\noindent
We close out this section by clarifying the question  as to when $K_F$ and $\hat K_F$ coincide.

\begin{proposition}[$K_F=\hat K_F$]\label{prop:ConeEq} Let $D\subset \bE_1$ be nonempty and convex and let  $F:D\to\bE_2$. Then $K_F=\hat  K_F$ if and only if $F$ is $K_F$-closed. 
\end{proposition}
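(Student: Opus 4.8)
The plan is to play the two minimality properties against each other. The backbone of the argument is the inclusion $K_F\subset\hat K_F$, valid whenever $\hat K_F$ exists: since $\hat K_F$ is in particular a closed, convex cone with respect to which $F$ is convex, the minimality of $K_F$ asserted in Proposition \ref{prop:KF} a) forces $K_F\subset\hat K_F$. With this one-sided inclusion secured, both directions of the equivalence reduce to verifying a reverse inclusion or an immediate consequence.

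For the sufficiency direction I would assume that $F$ is $K_F$-closed. Then $K_F$ is a closed, convex cone (Proposition \ref{prop:KF} a)) with respect to which $F$ is \emph{both} convex, by the very definition of $K_F$, and closed, by hypothesis. Hence $K_F$ satisfies the existence hypothesis of Proposition \ref{p:def-of-K_F^hat}, so $\hat K_F$ is well-defined; moreover $K_F$ is then one of the closed convex cones over which the defining intersection for $\hat K_F$ is taken. Minimality of $\hat K_F$ therefore gives $\hat K_F\subset K_F$, and combining this with $K_F\subset\hat K_F$ yields $K_F=\hat K_F$.

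For the necessity direction I would simply assume $K_F=\hat K_F$, which in particular presupposes that $\hat K_F$ exists. By the defining property of $\hat K_F$ in Proposition \ref{p:def-of-K_F^hat}, $F$ is $\hat K_F$-closed, and substituting $\hat K_F=K_F$ shows at once that $F$ is $K_F$-closed.

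The proof is essentially a bookkeeping exercise with the two smallest-cone characterizations, so no single step constitutes a genuine obstacle. The only point requiring care is the existence of $\hat K_F$: in the sufficiency direction one must not invoke the minimality of $\hat K_F$ before first establishing, from the $K_F$-closedness hypothesis, that $\hat K_F$ exists at all. It is precisely the candidate cone $K_F$ itself that supplies this existence, after which the minimality argument closes the gap.
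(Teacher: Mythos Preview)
Your proof is correct and follows essentially the same approach as the paper: establish $K_F\subset\hat K_F$ from the minimality of $K_F$, then use the minimality of $\hat K_F$ (once $K_F$-closedness is assumed) for the reverse inclusion. The only cosmetic difference is that the paper phrases the necessity direction as a contrapositive (if $F$ is not $K_F$-closed then $K_F\neq\hat K_F$, since $F$ is $\hat K_F$-closed), whereas you argue it directly; your version is also slightly more careful in flagging that the existence of $\hat K_F$ must be secured before invoking its minimality.
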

\begin{proof}By definition of the respective cones,  we always have $\hat K_F\supset K_F$.  But  if $F$ is $K_F$-closed then,  $\hat K_F\subset K_F$, by definition of $\hat K_F$, and hence equality holds. 
 
In turn, if $F$ is not $K_F$-closed, then $K_F\neq\hat{K}_F$, since $F$ is $\hat{K}_F$-closed by definition. 
\end{proof}

%\subsection{\boldmath{$K$}-epigraph of a \boldmath{$K$}-closed and \boldmath{$K$}-convex function}

\section{When is \boldmath{$\clco(\gph F)=\Epi{K}{F}$}?}\label{sec:Main}

\noindent
This section is  devoted to  our main question as to when the closed convex hull of the graph of a function equals its $K$-epigraph.

\subsection{A  characterization via the horizon cone}\label{sec:Char}

\noindent
We commence this subsection with the central link between the graph and the  $K$-epigraph of a function. To obtain an elegant proof we briefly tap into Fenchel conjugacy \cite{Roc 70}. To this end, realize that every set $S\subset \bE$ is uniquely determined through its indicator function $\delta_S:\bE\to \rp$ which is paired in duality with the {\em support function} $\sigma_S:\bE\to\rp$, $\sigma_S(x)=\sup_{y\in S}\ip{x}{y}$  via the conjugacy relations
$\delta^*_S=\sigma_{S}=\sigma_{\clco S}$, hence $\sigma_S^*=\delta_{\clco S}$, and  thus $\clco \delta_S=\delta_{\clco S}$.

\begin{proposition}\label{p:sup2} Let $K\subset\bE_2$  be a closed, convex cone and let  $F\colon\bE_1\to\bE_2^\bullet$ be proper,  $K$-closed and $K$-convex. Then 
\begin{equation*}
\Epi{K}{F}=\cl\big(\overline{\co}(\gph F)+\{0\}\times K\big).
\end{equation*}
\end{proposition}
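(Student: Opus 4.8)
The plan is to exploit the indicator/support-function duality recalled just above, which turns this into an essentially one-line computation. First I would record that both sides of the asserted identity are closed and convex sets: the left-hand side $\Epi{K}{F}$ is closed because $F$ is $K$-closed and convex because $F$ is $K$-convex, while the right-hand side is a closure, hence closed, and convex as the closure of the Minkowski sum of the convex set $\clco(\gph F)$ with the convex cone $\{0\}\times K$. Since a closed convex set $C$ is recovered from its support function through $\delta_C=\sigma_C^*$ (equivalently, two closed convex sets coincide if and only if their support functions agree), it suffices to prove that the two sides share the same support function.

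Next I would compute both support functions and verify they match. Invoking $\Epi{K}{F}=\gph F+\{0\}\times K$ from \eqref{eq:Kepigraph-1} together with the identity $\sigma_{A+B}=\sigma_A+\sigma_B$ (valid for arbitrary sets, with no closedness or convexity needed), the left-hand side has support function $\sigma_{\gph F}+\sigma_{\{0\}\times K}$. On the right-hand side the outer closure is invisible to the support function, and the relation $\sigma_S=\sigma_{\clco S}$ from the preceding paragraph gives $\sigma_{\clco(\gph F)}=\sigma_{\gph F}$; hence its support function is also $\sigma_{\gph F}+\sigma_{\{0\}\times K}$. Equality of support functions, combined with the closedness and convexity of both sides, delivers the claimed set identity.

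I expect no serious obstacle; the proof is genuinely short, so the only things to watch are that both hypotheses are actually used—it is precisely $K$-closedness and $K$-convexity that render the left-hand side recoverable from its support function—and that the outer closure on the right is in general indispensable, since $\clco(\gph F)+\{0\}\times K$ need not be closed even though each summand is. It is exactly the $K$-closedness of $F$ that forces this closure to collapse back onto $\Epi{K}{F}$.

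Finally, I would note that one can bypass conjugacy entirely by a direct two-inclusion argument: the inclusion ``$\subseteq$'' is immediate from $\gph F\subset\clco(\gph F)$, while for ``$\supseteq$'' one uses that $\Epi{K}{F}$ is closed and convex with $\gph F\subseteq\Epi{K}{F}$, so $\clco(\gph F)\subseteq\Epi{K}{F}$, whence $\clco(\gph F)+\{0\}\times K\subseteq\Epi{K}{F}+\{0\}\times K=\Epi{K}{F}$ by $K+K=K$ (Lemma \ref{lem:CC}); taking closures and invoking the closedness of $\Epi{K}{F}$ then concludes.
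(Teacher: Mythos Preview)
Your proof is correct and follows essentially the same support-function route as the paper: compute $\sigma_{\Epi{K}{F}}$ via \eqref{eq:Kepigraph-1}, recognize it as the support function of the right-hand side, and conclude by biduality for closed convex sets---the paper phrases the last step through the conjugate-of-a-sum/infimal-convolution identity $(\sigma_{\clco(\gph F)}+\delta_{\bE_1\times K^\circ})^*=\cl(\delta_{\clco(\gph F)}\infconv\delta_{\{0\}\times K})$ rather than your direct comparison of support functions, but this is cosmetic. Your final-paragraph two-inclusion argument is a genuinely more elementary alternative that the paper does not give.
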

\begin{proof} Using  \eqref{eq:Kepigraph-1} and   the set-additivity for support functions  we have
\begin{equation*}\label{eq:supp1}
\sigma_{\Epi{K}{F}}  = \sigma_{\gph F+\{0\}\times K}  = \sigma_{\gph F} + \sigma_{\{0\}\times K}=\sigma_{\clco(\gph F)}+\delta_{\bE_1\times K^\circ}.
\end{equation*}
Moreover, the assumptions on $F$ imply that  $\Epi{K}{F}$ is  closed and convex, and hence
\[
\delta_{\Epi{K}{F}}= \big(\sigma_{\overline{\co}(\gph F)}+\delta_{\bE_1\times K^\circ}\big)^*
=\cl\big(\delta_{\overline{\co}(\gph F)}\infconv \delta_{\{0\}\times K}\big)  =\delta_{\cl\big(\overline{\co}(\gph F)+\{0\}\times K\big)}.
\]
Here the second identity uses  \cite[Theorem~16.4]{Roc 70}, while   the third holds due to the identity $ \delta_{A}\infconv\delta_{B}=\delta_{A+B}$ for any two sets.
\end{proof}

\noindent
We are now in a position to state our first main theorem. 

\begin{theorem}
\label{t:rec_cnd} 
Let $K\subset \bE_2$ be a closed convex cone and let  $F\colon\bE_1\to\bE_2^\bullet$ be $K$-convex and $K$-closed. Then
\[
\Epi{K}{F}=\clco(\gph F)\;\Longleftrightarrow\; \{0\}\times K\subset [\clco(\gph F)]^{\infty}.
\]
\end{theorem}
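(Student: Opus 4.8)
The plan is to leverage Proposition \ref{p:sup2}, which under the standing hypotheses ($K$ closed convex cone, $F$ proper $K$-convex and $K$-closed) already gives
\[
\Epi{K}{F}=\cl\big(\clco(\gph F)+\{0\}\times K\big).
\]
Thus the equivalence to be proved reduces entirely to the claim that
\[
\cl\big(\clco(\gph F)+\{0\}\times K\big)=\clco(\gph F)\;\Longleftrightarrow\;\{0\}\times K\subset[\clco(\gph F)]^\infty.
\]
Abbreviating $C:=\clco(\gph F)$, which is a nonempty closed convex set, and $M:=\{0\}\times K$, which is a closed convex cone, the entire theorem collapses to a general fact about closed convex sets: $\cl(C+M)=C$ if and only if $M\subset C^\infty$. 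This is the heart of the matter, and I would isolate it as the key step.

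\textbf{Forward implication.} Suppose $\cl(C+M)=C$. Since $M$ is a cone, for any $u\in M$ and any $t>0$ we have $tu\in M$, hence $x+tu\in C+M\subset\cl(C+M)=C$ for a fixed $x\in C$. Thus the whole ray $x+\R_+u$ lies in the closed convex set $C$. By the standard characterization of the horizon (recession) cone of a closed convex set — namely $u\in C^\infty$ iff $x+\R_+u\subset C$ for some (equivalently every) $x\in C$, see \cite[Theorem 8.3]{Roc 70} — we conclude $u\in C^\infty$. As $u\in M$ was arbitrary, $M\subset C^\infty$.

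\textbf{Reverse implication.} Suppose $M\subset C^\infty$. Because $C$ is a nonempty convex set, the identity $C+C^\infty=\cl C=C$ recorded in the Preliminaries applies, and since $M\subset C^\infty$ we get $C\subset C+M\subset C+C^\infty=C$, so $C+M=C$. As $C$ is already closed, $\cl(C+M)=C$. Combining both directions with Proposition \ref{p:sup2} yields the stated equivalence; note that here the closure in $\Epi{K}{F}=\cl(C+M)$ is in fact superfluous, which is the cleaner way to organize the argument.

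\textbf{Main obstacle.} The only subtlety is confirming that the closure appearing in Proposition \ref{p:sup2} can be dispensed with exactly when $M\subset C^\infty$; everything else is the recession-cone bookkeeping above. The reverse direction is essentially immediate from $C+C^\infty=C$, so the real content is the forward direction, where one must extract a full recession ray from the cone structure of $M$ and invoke the recession-cone criterion. I expect no genuine difficulty, since the hypothesis that $K$ (hence $M$) is a \emph{cone} is precisely what converts the set-sum condition into a ray condition; the argument would break for a general closed convex summand $M$, and flagging this reliance on the cone property is the one point worth stating explicitly.
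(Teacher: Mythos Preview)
Your proposal is correct and follows essentially the same route as the paper: both reduce everything to Proposition~\ref{p:sup2} and then use recession-cone bookkeeping on the closed convex set $C=\clco(\gph F)$ and the cone $M=\{0\}\times K$. The only cosmetic difference is that, for the forward implication, the paper derives $M\subset C^\infty$ from the inclusion $C+M\subset C$ by invoking \cite[Exercise~3.12]{RoW 98}, whereas you spell this out via the ray characterization \cite[Theorem~8.3]{Roc 70}; the reverse implications are identical.
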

\begin{proof}
Suppose that $\Epi{K}{F}=\clco(\gph F)$. It follows from Proposition~\ref{p:sup2} that
\[
\clco(\gph F) +\{0\}\times K\subset \cl\big(\overline{\co}(\gph F)+\{0\}\times K\big)=\clco(\gph F).
\]
Taking the horizon cone on both sides  and using  \cite[Exercise 3.12]{RoW 98}, yields $\{0\}\times K\subset [\clco(\gph F)]^{\infty}$. 

Now suppose that  $\{0\}\times K\subset [\clco(\gph F)]^{\infty}$. Then 
\[
\clco(\gph F) +\{0\}\times K\subset \clco(\gph F) +[\clco(\gph F)]^{\infty}=\clco(\gph F).
\]
where the last identity uses e.g. \cite[Theorem 3.6]{RoW 98}. 
Therefore, again using Proposition~\ref{p:sup2},  we obtain
\[
\Epi{K}{F}=\cl(\clco(\gph F)+\{0\}\times K)\subset \clco(\gph F)\subset\Epi{K}{F}.
\]
\end{proof}

\noindent
We will frequently make us of the following trivial observation.

\begin{remark}\label{rem:Drop} We observe that the closure operation in $ [\clco(\gph F)]^{\infty}$ is superfluous, i.e. 
\[
 [\clco(\gph F)]^{\infty}=[\co(\gph F)]^{\infty}.
\]
\end{remark}
\noindent
We immediately obtain the following sufficient condition. 

\begin{corollary}\label{cor:Suff} \label{t:rec_cnd} 
Let $K\subset \bE_2$ be a closed, convex cone and let  $F\colon\bE_1\to\bE_2^\bullet$ be $K$-convex and $K$-closed such that 
$\{0\}\times K\subset \clco(\gph F)$. Then $\Epi{K}{F}=\clco(\gph F)$.
\end{corollary}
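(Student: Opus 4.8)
The plan is to reduce the claim directly to Theorem \ref{t:rec_cnd}. Since $F$ is assumed $K$-convex and $K$-closed with $K$ a closed convex cone, the hypotheses of that theorem are in force, and its ``$\Leftarrow$'' implication yields $\Epi{K}{F}=\clco(\gph F)$ as soon as we have verified the horizon-cone inclusion $\{0\}\times K\subset [\clco(\gph F)]^{\infty}$. Thus the entire burden is to upgrade the \emph{set} inclusion $\{0\}\times K\subset \clco(\gph F)$ assumed here into the corresponding \emph{horizon-cone} inclusion.

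To do this I would first note that $\{0\}\times K$ is itself a closed convex cone in $\bE_1\times\bE_2$: it is convex and closed because $K$ is, and it is a cone because $\lambda(0,y)=(0,\lambda y)\in\{0\}\times K$ for all $\lambda\geq 0$ and $y\in K$. Invoking the identity $Q^\infty=\cl Q$ for a cone $Q$ recorded in the preliminaries, applied to $Q=\{0\}\times K$, gives $(\{0\}\times K)^{\infty}=\{0\}\times K$. Next I would use monotonicity of the horizon cone under set inclusion, which is immediate from its definition: if $S\subset C$ and $u\in S^\infty$ is witnessed by $t_k\downarrow 0$ and $x_k\in S$ with $t_kx_k\to u$, then the same sequence witnesses $u\in C^\infty$, so $S^\infty\subset C^\infty$. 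Applying this with $S=\{0\}\times K$ and $C=\clco(\gph F)$ and combining with the previous step yields
\[
\{0\}\times K=(\{0\}\times K)^{\infty}\subset [\clco(\gph F)]^{\infty},
\]
which is precisely the right-hand condition in Theorem \ref{t:rec_cnd}.

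There is essentially no obstacle here, as the result is an immediate corollary; the only point requiring care is correctly recognizing that a cone contained in a set contributes its own directions to the horizon cone of that set. An even more elementary alternative, bypassing the monotonicity remark, is to argue pointwise: for $u=(0,y)\in\{0\}\times K$ the points $ku$ lie in $\clco(\gph F)$ for every $k\in\bN$ (because $\{0\}\times K$ is a cone sitting inside $\clco(\gph F)$), so taking $t_k=1/k\downarrow 0$ we get $t_k(ku)=u\to u$, whence $u\in[\clco(\gph F)]^{\infty}$ straight from the definition of the horizon cone. Either route establishes the inclusion, and Theorem \ref{t:rec_cnd} then closes the argument.
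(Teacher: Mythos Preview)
Your proof is correct and follows essentially the same approach as the paper: both reduce to Theorem \ref{t:rec_cnd} by combining the identity $Q^\infty=\cl Q$ for cones (applied to $Q=\{0\}\times K$) with monotonicity of the horizon-cone operator under inclusion. Your exposition is more detailed and even supplies an alternative pointwise argument, but the underlying route is identical.
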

\begin{proof} Combine Theorem \ref{t:rec_cnd}  with \cite[Exercise 3.11]{RoW 98} and the fact that the horizon operation preserves inclusion.
\end{proof}

\noindent
Combining  Theorem \ref{t:rec_cnd} with Lemma \ref{l:inculsion-Kepigraphs} yields the following  result.

\begin{corollary}
\label{c:K^F}
Let $K$ be a cone of $\bE_2$ and let $F\colon\bE_1\to\bE_2^\bullet$ be proper, and define the closed, convex cone
$
K^F := \{ u\in\bE_2 \mid (0,u)\in [\clco(\gph F)]^\infty\} .
$
Then
\[
\Epi{K}{F} = \clco(\gph F) \iff K = K^F = \hat{K}_F .
\]
\end{corollary}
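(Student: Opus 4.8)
The goal is to prove Corollary \ref{c:K^F}, namely that $\Epi{K}{F} = \clco(\gph F)$ holds if and only if $K = K^F = \hat K_F$, where $K^F := \{u\in\bE_2\mid (0,u)\in[\clco(\gph F)]^\infty\}$. The plan is to establish three facts and chain them together: first, that $K^F$ is the \emph{unique} cone (if any) that can equal $\clco(\gph F)$ as a $K$-epigraph; second, that $F$ is automatically $K^F$-convex and $K^F$-closed; and third, that this forces $K^F=\hat K_F$ and gives the desired equivalence with the help of Theorem \ref{t:rec_cnd}.

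First I would verify that $K^F$ is a closed, convex cone. This is immediate because $[\clco(\gph F)]^\infty$ is a closed convex cone (horizon cones of sets are always cones, and horizon cones of convex sets are convex), and $K^F$ is its preimage under the continuous linear injection $u\mapsto(0,u)$, equivalently its intersection with the subspace $\{0\}\times\bE_2$ followed by projection. Next, the crucial structural step: I would show that $F$ is $K^F$-convex and $K^F$-closed, so that Theorem \ref{t:rec_cnd} applies with $K=K^F$. Convexity of $\Epi{K^F}{F}$ should follow from $\clco(\gph F)+\{0\}\times K^F\subset\clco(\gph F)$ (using $\{0\}\times K^F\subset[\clco(\gph F)]^\infty$ together with \cite[Theorem 3.6]{RoW 98}), which shows $\clco(\gph F)$ is itself a $K^F$-epigraph-like set; more carefully, by the defining property $\{0\}\times K^F\subset[\clco(\gph F)]^\infty$ and the argument in the second half of the proof of Theorem \ref{t:rec_cnd}, one gets $\cl(\clco(\gph F)+\{0\}\times K^F)=\clco(\gph F)$, which is closed and convex. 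Since $\gph F\subset\clco(\gph F)$ and $\Epi{K^F}{F}=\gph F+\{0\}\times K^F$ by \eqref{eq:Kepigraph-1}, this sandwiches $\Epi{K^F}{F}$ between $\gph F$ and the closed convex set $\clco(\gph F)$; taking closed convex hulls yields $\clco(\gph F)\subset\clco(\Epi{K^F}{F})\subset\clco(\gph F)$, and I then need to argue that $\Epi{K^F}{F}$ itself (not merely its closed convex hull) equals $\clco(\gph F)$, which establishes simultaneously $K^F$-convexity and $K^F$-closedness.

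With $F$ known to be $K^F$-convex and $K^F$-closed and $\{0\}\times K^F\subset[\clco(\gph F)]^\infty$ holding by construction, Theorem \ref{t:rec_cnd} gives $\Epi{K^F}{F}=\clco(\gph F)$ directly. This simultaneously proves the backward direction (if $K=K^F=\hat K_F$, then $F$ is $K$-convex and $K$-closed and the horizon condition holds, so equality follows) and identifies $K^F=\hat K_F$: indeed $\Epi{K^F}{F}=\clco(\gph F)$ shows $F$ is $K^F$-closed and $K^F$-convex, so $\hat K_F$ exists and $\hat K_F\subset K^F$ by minimality; conversely, any closed convex cone $K$ with respect to which $F$ is convex and closed and satisfying $\Epi{K}{F}=\clco(\gph F)$ must, by Lemma \ref{l:inculsion-Kepigraphs} (uniqueness of the cone producing $\clco(\gph F)$), equal $K^F$, and applying this to $\hat K_F$ itself forces $\hat K_F=K^F$. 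For the forward direction, suppose $\Epi{K}{F}=\clco(\gph F)$; then Proposition \ref{prop:NecK} makes $K$ closed and convex, Theorem \ref{t:rec_cnd} gives $\{0\}\times K\subset[\clco(\gph F)]^\infty$ so $K\subset K^F$, while Lemma \ref{l:inculsion-Kepigraphs} forbids two distinct cones from yielding the same $\clco(\gph F)$, forcing $K=K^F$, and the preceding paragraph gives $K^F=\hat K_F$.

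The main obstacle I anticipate is the passage from ``$\Epi{K^F}{F}$ has closed convex hull equal to $\clco(\gph F)$'' to ``$\Epi{K^F}{F}$ \emph{itself} is closed, convex, and equal to $\clco(\gph F)$,'' because Theorem \ref{t:rec_cnd} requires $K^F$-closedness and $K^F$-convexity as hypotheses rather than delivering them. The clean way around this is to avoid invoking Theorem \ref{t:rec_cnd} as a black box at this stage and instead prove the identity $\Epi{K^F}{F}=\clco(\gph F)$ from scratch: the inclusion $\Epi{K^F}{F}\subset\clco(\gph F)$ comes from $\gph F\subset\clco(\gph F)$ and $\clco(\gph F)+\{0\}\times K^F=\clco(\gph F)$ (via the horizon-cone absorption $C+C^\infty=\cl C$ for convex $C$), while the reverse inclusion $\clco(\gph F)\subset\Epi{K^F}{F}$ requires showing $\Epi{K^F}{F}$ is already closed and convex and contains $\gph F$; here I would lean on the definition $\Epi{K^F}{F}=\gph F+\{0\}\times K^F$ and the fact that adding a recession direction set that is absorbed by $\clco(\gph F)$ keeps the sum within a closed convex set. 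Once the raw identity $\Epi{K^F}{F}=\clco(\gph F)$ is in hand, everything else is a bookkeeping exercise combining Lemma \ref{l:inculsion-Kepigraphs}, Proposition \ref{prop:NecK}, and the minimality defining $\hat K_F$.
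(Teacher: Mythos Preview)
Your proposal contains a genuine gap: you attempt to prove the identity $\Epi{K^F}{F}=\clco(\gph F)$ as a \emph{standalone} fact about $K^F$, independent of any hypothesis on $K$. This is false in general. Take the paper's own example $F=\delta_{(0,1]}:\R\to\R^\bullet$: here $\gph F=(0,1]\times\{0\}$, so $\clco(\gph F)=[0,1]\times\{0\}$ and $[\clco(\gph F)]^\infty=\{(0,0)\}$, giving $K^F=\{0\}$. Then $\Epi{K^F}{F}=\gph F=(0,1]\times\{0\}\neq [0,1]\times\{0\}=\clco(\gph F)$. In this situation $\hat K_F$ does not even exist, so certainly $K^F\neq\hat K_F$. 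The inclusion $\Epi{K^F}{F}\subset\clco(\gph F)$ that you sketch is fine; it is the reverse inclusion $\clco(\gph F)\subset\Epi{K^F}{F}$ that fails, and your proposed justification (``adding a recession direction set that is absorbed by $\clco(\gph F)$ keeps the sum within a closed convex set'') only re-establishes the first inclusion, not the second.

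The paper avoids this trap by proving $\Epi{K^F}{F}=\clco(\gph F)$ \emph{only under the hypothesis} $\Epi{K}{F}=\clco(\gph F)$. Once you know $K\subset K^F$ (from Theorem~\ref{t:rec_cnd}), the hypothesis itself supplies the missing inclusion via $\clco(\gph F)=\Epi{K}{F}\subset\Epi{K^F}{F}$ (using \eqref{eq:Kepigraph-1}), and Lemma~\ref{l:inculsion-Kepigraphs} then forces $K=K^F$. The identification $K=\hat K_F$ is handled analogously: from $\hat K_F\subset K$ one gets $\Epi{\hat K_F}{F}\subset\Epi{K}{F}=\clco(\gph F)\subset\Epi{\hat K_F}{F}$, the last inclusion because $\Epi{\hat K_F}{F}$ is closed, convex and contains $\gph F$. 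Your forward direction (assuming $K=K^F=\hat K_F$ and invoking Theorem~\ref{t:rec_cnd}) is correct and matches the paper; the fix is simply to abandon the unconditional claim and route the backward argument through the hypothesis as above.
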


\begin{proof} First, let  $K=K^F=\hat{K}_F$.  By definition of $K^F=K$ we hence have $\{0\}\times K\subset [\clco(\gph F)]^\infty$.
From Theorem \ref{t:rec_cnd} we thus  conclude that $\Epi{K}{F}=\clco(\gph F)$. 

In turn, assume that $\Epi{K}{F} = \clco(\gph F)$. Then  by Theorem \ref{t:rec_cnd} we have $K\subset K^F$, and hence $\clco(\gph F) = \Epi{K}{F} \subset \Epi{K^F}{F}$. In addition, $\{0\}\times K^F \subset [\clco(\gph F)]^\infty$, by definition of $K^F$. Hence, using    \eqref{eq:Kepigraph-1}  and the horizon property of convex sets, we have
\[
\Epi{K^F}{F} = \gph F + \{0\}\times K^F\subset \clco(\gph F) + [\clco(\gph F)]^\infty = \clco(\gph F).
\]
Thus, $\Epi{K^F}{F}=\clco(\gph F)=\Epi{K}{F}$ and hence,  by Lemma \ref{l:inculsion-Kepigraphs}, we already have $K^F=K$. Moreover, as $F$ is $K$-convex and $K$-closed, we have $\hat{K}_F\subset K$, thus $\Epi{\hat{K}_F}{F}\subset\Epi{K}{F}= \clco(\gph F)$. Using the fact that $\clco(\gph F)\subset\Epi{\hat{K}_F}{F}$ (as $\Epi{\hat{K}_F}{F}$ is a closed convex set containing $\gph F$), we conclude that $\Epi{\hat{K}_F}{F}=\Epi{K}{F}$, hence, again  by Lemma \ref{l:inculsion-Kepigraphs}, $K=\hat{K}_F$.
\end{proof}

\subsection{Necessary conditions}\label{sec:Nec}

\noindent
In this subsection  we discuss necessary conditions for     $\clco(\gph F)=\Epi{K}{F}$.

\subsubsection{Necessary conditions on the dual cone}

\begin{proposition}
\label{p:K-included-in-dual-cone-of-hatK_F}
Let  $K\subset\bE_2$ be  a cone and $F:\bE_1\to\bE_2^\bullet$ proper such that  $\Epi{K}{F}=\clco(\gph F)$.  Then $K\subset -\hat{K}_F^\circ$.
\end{proposition}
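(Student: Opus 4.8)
The plan is to derive the inclusion from the dual description of $\hat K_F$. Since $\Epi{K}{F}=\clco(\gph F)$ is closed and convex, Proposition \ref{prop:NecK} forces $K$ to be closed and convex and $F$ to be $K$-convex and $K$-closed; moreover $\dom F=\pi_1(\Epi{K}{F})$ is convex, so $\hat K_F$ is available and Proposition \ref{p:dual-cone-of-K_F^hat} yields
\[
-\hat K_F^\circ=\cl\set{u\in\bE_2}{\ip{u}{F}\in\Gamma_0(\dom F)}.
\]
As the right-hand side is closed, it suffices to show that every $u\in K$ (or at least every $u$ in a dense subcone of $K$) satisfies $\ip{u}{F}\in\Gamma_0(\dom F)$, i.e. that the scalarization $\ip{u}{F}$ is proper, convex and $\dom F$-closed whenever $u\in K$.

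To bring the hypothesis to bear I would pass through the support-function identity underlying Proposition \ref{p:sup2}: since $\sigma_{\Epi{K}{F}}=\sigma_{\clco(\gph F)}+\delta_{\bE_1\times K^\circ}$ and $\sigma_{\clco(\gph F)}=\sigma_{\gph F}$, the equality $\Epi{K}{F}=\clco(\gph F)$ is equivalent to the domain condition $\dom\sigma_{\gph F}\subset\bE_1\times K^\circ$. In parallel, Corollary \ref{c:K^F} identifies $\{0\}\times K$ with the vertical recession directions of $\clco(\gph F)$. I would then combine these two facts with the graph--epigraph identity \eqref{eq:Kepigraph-1} to examine, for a fixed $u\in K$, the epigraph of $\ip{u}{F}$ and argue that it is convex and closed relative to $\dom F$.

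The main obstacle is exactly the convexity of $\ip{u}{F}$ for $u\in K$. Proposition \ref{p:lsc1} only delivers convexity of $\ip{u}{F}$ for $u\in-K^\circ$, so for a general $u\in K$ no convexity is handed to us and the argument must be extracted entirely from the structural fact that $\gph F+\{0\}\times K$ is already closed, convex, and equal to $\clco(\gph F)$. I expect the decisive step to be to show that the vertical recession cone $K=K^F$ of $\clco(\gph F)$ forces each scalarization $\ip{u}{F}$ with $u\in K$ into $\Gamma_0(\dom F)$; pinning this down, together with the passage to the closure in the description of $-\hat K_F^\circ$ and the handling of boundary points of $\dom F$ (where $\dom F$-closedness, rather than lower semicontinuity on all of $\bE_1$, is the correct notion), is where the real work lies.
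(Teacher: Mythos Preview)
Your framework is right and matches the paper's: reduce to showing $\ip{u}{F}\in\Gamma_0(\bE_1)$ (or $\Gamma_0(\dom F)$) for every $u\in K\setminus\{0\}$, then invoke Proposition~\ref{p:dual-cone-of-K_F^hat}. But you explicitly flag the decisive step as unfinished, and this is indeed the whole proof. The support-function identity and Corollary~\ref{c:K^F} tell you $\{0\}\times K\subset[\clco(\gph F)]^\infty$, which you already know from Theorem~\ref{t:rec_cnd}; they do not by themselves upgrade to convexity or lower semicontinuity of the scalarization $\ip{u}{F}$.

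The missing idea, which the paper supplies, is a \emph{transfer} of the horizon-cone condition from $F$ to $\ip{u}{F}$. Fix $u\in K\setminus\{0\}$. From $(0,u)\in[\co(\gph F)]^\infty$ one has $\lambda_k(x_k,y_k)\to(0,u)$ with $(x_k,y_k)\in\co(\gph F)$; Carath\'eodory lets you write $x_k=\sum_i\alpha_k^i x_k^i$ and $y_k=\sum_i\alpha_k^i F(x_k^i)$. Taking the inner product with $u$ and rescaling by $t\lambda_k/\|u\|^2$ yields, for every $t\ge 0$, a sequence witnessing $(0,t)\in[\co(\gph\ip{u}{F})]^\infty$. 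Hence $\{0\}\times\bR_+\subset[\clco(\gph\ip{u}{F})]^\infty$, and the scalar case of Theorem~\ref{t:rec_cnd} gives $\epi\ip{u}{F}=\clco(\gph\ip{u}{F})$, so $\ip{u}{F}\in\Gamma_0(\bE_1)$. This Carath\'eodory-based projection is the ``real work'' you were anticipating; without it the argument is incomplete.
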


\begin{proof} Let $u\in K\setminus\{0\}$. 
By Theorem \ref{t:rec_cnd}, we have that $(0,u)\in [\clco(\gph F)]^\infty$.  By Remark \ref{rem:Drop} there hence exist $\{(x_k,y_k)\in \co(\gph F)\}$ and $\{\lambda_k\}\downarrow 0$ such that $\lambda_k(x_k,y_k)\to (0,u)$. With $\kappa:=\dim \bE_1\times \bE_2$ and  Carath\'eodory's theorem \cite[Theorem 17.1]{Roc 70}, for  $i=1,\dots,\kappa+1$,  we find sequences $\{x_{k}^{i}\in \dom F\}_k$ and $\{\alpha_{k}^i\}_k$ such that $\sum_{i=1}^{\kappa+1}\alpha_{k}^i=1$ for all $k\in\bN$ as well as
\[
x_k= \sum_{i=1}^{\kappa+1}\alpha_{k}^i x_{k}^{i}\AND y_k=\sum_{i=1}^{\kappa+1}\alpha_{k,j}^i F(x_{k,j}^{i})\quad\forall k\in\bN. 
\]
Now let $t\geq 0$. Then $t_k:=t\frac{\lambda_k}{\|u\|^2}\downarrow 0$ and 
\[
t_k x_k\to 0\AND t_k  \sum_{i=1}^{\kappa+1}\alpha_{k}^i \ip{u}{F}(x_{k}^{i})=t\frac{\ip{\lambda_ky_k}{u}}{\|u\|^2}\to t.
\]
Thus,   for $t\geq 0$, we have $(0,t)\in[\co(\gph\langle u,F\rangle) ]^\infty=[\clco(\gph\langle u,F\rangle) ]^\infty$. Hence $\lbrace 0\rbrace\times\bR_+\subset [\clco(\gph\langle u,F\rangle) ]^\infty$, so by Theorem \ref{t:rec_cnd}, $\epi\langle u,F\rangle = \clco(\gph\langle u,F\rangle)$. This means that $\epi\langle u,F\rangle$ is closed and convex so that $\langle u,F\rangle\in\Gamma_0(\bE_1)$. Therefore $K\setminus \{0\}\in \lbrace u\in\bE_2 \mid \langle u,F\rangle \in\Gamma_0(\bE_1)\rbrace  $, hence $K\subset \cl(\lbrace u\in\bE_2 \mid \langle u,F\rangle \in\Gamma_0(\bE_1)\rbrace  )=-\hat{K}_F^\circ$, by 
 Proposition \ref{p:dual-cone-of-K_F^hat}.
\end{proof}

\noindent
We readily derive the following necessary condition on the dual cone.

\begin{corollary}
\label{c:K-included-in-minus-polar-cone}
Under the assumptions of Proposition \ref{p:K-included-in-dual-cone-of-hatK_F}, we have   $K\subset -K^\circ$.
\end{corollary}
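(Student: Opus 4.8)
The plan is to show that the hypothesis $\Epi{K}{F}=\clco(\gph F)$ already forces $K=\hat{K}_F$, after which the assertion $K\subset -K^\circ$ is a one-line substitution into Proposition \ref{p:K-included-in-dual-cone-of-hatK_F}. That proposition furnishes $K\subset -\hat{K}_F^\circ$; once we know $K=\hat{K}_F$, we have $-\hat{K}_F^\circ=-K^\circ$, and the inclusion $K\subset -\hat{K}_F^\circ=-K^\circ$ is exactly what we want.

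It therefore remains to establish the equality $K=\hat{K}_F$. The cleanest route is to quote Corollary \ref{c:K^F}, which states precisely that $\Epi{K}{F}=\clco(\gph F)$ is equivalent to $K=K^F=\hat{K}_F$; in particular the hypothesis delivers $K=\hat{K}_F$ directly. If one prefers a self-contained argument, I would first note that $\Epi{K}{F}=\clco(\gph F)$ is closed and convex, so $F$ is $K$-closed and $K$-convex and, by Proposition \ref{prop:NecK}, $K$ is closed and convex; minimality of $\hat{K}_F$ (Proposition \ref{p:def-of-K_F^hat}) then gives $\hat{K}_F\subset K$, whence $\Epi{\hat{K}_F}{F}\subset\Epi{K}{F}$ by Lemma \ref{l:inculsion-Kepigraphs}. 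Conversely, $\Epi{\hat{K}_F}{F}$ is a closed convex set containing $\gph F$, so it contains $\clco(\gph F)=\Epi{K}{F}$; the two inclusions force $\Epi{\hat{K}_F}{F}=\Epi{K}{F}$, and the uniqueness clause of Lemma \ref{l:inculsion-Kepigraphs} yields $K=\hat{K}_F$.

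The point that must not be glossed over is that one cannot obtain $K\subset -K^\circ$ merely by chaining the two inclusions $\hat{K}_F\subset K$ and $K\subset -\hat{K}_F^\circ$ that are available a priori: since polarity is order reversing, these place both $K$ and $-K^\circ$ inside $-\hat{K}_F^\circ$ without relating them to one another. It is exactly the \emph{equality} $K=\hat{K}_F$ (and not just the inclusion $\hat{K}_F\subset K$) that collapses $-\hat{K}_F^\circ$ onto $-K^\circ$ and makes the conclusion immediate. Given that the corollary is advertised as something we ``readily derive,'' I expect the intended proof to be the two-line version invoking Corollary \ref{c:K^F}, with the self-contained derivation of $K=\hat{K}_F$ available as a fallback.
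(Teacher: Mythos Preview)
Your proposal is correct and matches the paper's proof essentially verbatim: the paper invokes Corollary~\ref{c:K^F} to obtain $K=\hat{K}_F$ and then applies Proposition~\ref{p:K-included-in-dual-cone-of-hatK_F} to conclude $K\subset -\hat{K}_F^\circ=-K^\circ$. Your self-contained fallback derivation of $K=\hat{K}_F$ is also fine and is in fact the relevant part of the proof of Corollary~\ref{c:K^F}.
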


\begin{proof}
By Corollary \ref{c:K^F}, $K=\hat{K}_F$, and by Proposition \ref{p:K-included-in-dual-cone-of-hatK_F}, $K\subset -\hat{K}_F^\circ = -K^\circ$.
\end{proof}

\subsubsection{Affine majorization and minorization}\label{sec:Aff}

For motivational purposes, we start this subsection with the scalar case ($K=\R_+$), where the question whether the closed convex hull of the graph of a function equals its $K$-epigraph can be fully  answered via affine majorization. The proof relies, in essence, on a standard separation argument.

\begin{theorem}[The scalar case]
\label{p:epi=conv-gph-1Dcase}
Let $f\in\Gamma_0(\bE)$. Then  $\epi f = \clco(\gph f)$  if and only if   $f$ does not have an affine majorant on its domain. 
\end{theorem}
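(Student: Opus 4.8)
The plan is to prove both implications by elementary separation, exactly as the parenthetical remark before the statement suggests, without recourse to conjugacy. Throughout I write $C:=\clco(\gph f)$ and record the one automatic inclusion $C\subseteq\epi f$: indeed $\gph f\subseteq\epi f$, and $\epi f$ is closed and convex because $f\in\Gamma_0(\bE)$. Hence the identity $\epi f=C$ can only fail by $\epi f\setminus C\neq\emptyset$, and the entire content of the theorem is to tie this gap to the existence of an affine majorant.

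For the direction ``affine majorant $\Rightarrow$ $\epi f\neq C$'' I would argue directly. Assume $f(x)\leq\ip{c}{x}+b$ for all $x\in\dom f$. Then $\gph f$ lies in the closed halfspace $H:=\set{(x,\alpha)}{\alpha-\ip{c}{x}\leq b}$, and since $H$ is closed and convex we get $C\subseteq H$. Choosing any $x_0\in\dom f$ (nonempty as $f$ is proper), the point $(x_0,\ip{c}{x_0}+b+1)$ violates the defining inequality of $H$, hence lies outside $C$; yet it lies in $\epi f$, because $f(x_0)\leq\ip{c}{x_0}+b<\ip{c}{x_0}+b+1$. This exhibits a point of $\epi f\setminus C$, so $\epi f\neq C$.

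For the converse I would take the contrapositive: assume $\epi f\neq C$ and manufacture an affine majorant. Pick $(x_0,\alpha_0)\in\epi f\setminus C$. As $C$ is closed and convex, strict separation yields $(c',s)\neq 0$ and $\gamma\in\R$ with $\ip{c'}{x_0}+s\alpha_0>\gamma\geq\ip{c'}{x}+s\alpha$ for all $(x,\alpha)\in C$. The crucial and only delicate step is to pin down the sign of the vertical component $s$. Since $x_0\in\dom f$ we have $(x_0,f(x_0))\in\gph f\subseteq C$, so $\gamma\geq\ip{c'}{x_0}+sf(x_0)$; combining this with the strict inequality forces $s(\alpha_0-f(x_0))>0$. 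Now $(x_0,\alpha_0)\in\epi f$ gives $\alpha_0\geq f(x_0)$, while $(x_0,f(x_0))\in C$ together with $(x_0,\alpha_0)\notin C$ forces $\alpha_0\neq f(x_0)$; hence $\alpha_0>f(x_0)$ and therefore $s>0$. Dividing the inequality $\ip{c'}{x}+sf(x)\leq\gamma$ (valid for every $x\in\dom f$, since then $(x,f(x))\in C$) by $s>0$ yields $f(x)\leq\gamma/s-\ip{c'/s}{x}$, the desired affine majorant on $\dom f$.

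The only genuine obstacle is the sign analysis of $s$ above; everything else is bookkeeping, and the properness and closedness of $f$ enter only to guarantee $\dom f\neq\emptyset$ and $C\subseteq\epi f$. As a cross-check I would also note the alternative route through Theorem~\ref{t:rec_cnd}: with $K=\R_+$ the claim reduces to $\{0\}\times\R_+\subseteq[\clco(\gph f)]^\infty$, and one verifies directly that this horizon (equivalently recession) direction lies in $C$ precisely when $C$ is contained in no halfspace of the form $\set{(x,\alpha)}{\alpha\leq\ip{c}{x}+b}$, i.e. precisely when $f$ has no affine majorant. This recovers the same dichotomy and makes transparent that the missing recession direction $(0,1)$ is exactly what an affine majorant obstructs.
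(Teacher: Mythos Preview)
Your proof is correct and, for the direction ``$\epi f\neq C\Rightarrow$ affine majorant'', follows essentially the same separation argument as the paper; your sign analysis of $s$ is in fact slightly more explicit than the paper's (which passes over the case $\alpha_0=f(x_0)$ without comment). For the converse direction, however, you take a shorter route: you observe that an affine majorant traps $\gph f$, and hence $C$, in a closed halfspace $H$, and then simply exhibit a point of $\epi f$ lying outside $H$. The paper instead argues by contradiction via Carath\'eodory's theorem, writing a putative point of $C$ as a limit of convex combinations of graph points and deriving a numerical contradiction from the affine bound. Your halfspace argument is more elementary and cleaner here; the paper's Carath\'eodory approach, while heavier in the scalar case, is the template reused in Proposition~\ref{p:K-included-in-dual-cone-of-hatK_F} for the vector-valued setting, where no such direct halfspace shortcut is available.
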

\begin{proof}  Suppose  that there exists $(\bar x,\bar t)\in\epi f\backslash\clco(\gph f)$.  In particular,  $\bar x\in\dom f$, and by strong separation  \cite[Corollary 11.4.2]{Roc 70}, 
there exists $(s,\alpha)\in\bE_1\times\R$ such that
\begin{equation}
\label{eq:20190429e1}
\ip{s}{\bar x}+\alpha\bar t >\sup_{(x,t)\in\clco(\gph f)}\ip{s}{x}+\alpha t.
\end{equation}
Choosing   $(x,t):= (\bar x,f(\bar x))$, we find that 
$
\alpha(\bar t -f(\bar x))>0,
$
and hence, $\alpha >0$. It then follows from \eqref{eq:20190429e1} with $x\in\dom f$ and $t=f(x)$ that
$
\ip{s/\alpha}{\bar x-x}+\bar t>f(x).
$ 
Thus $f$ is majorized on its domain by the affine map $x\mapsto -\ip{s/\alpha}{x}+\ip{s/\alpha}{\bar x}+\bar t$, which proves  one direction.
% which proves that if $f$ has an no affine majorant on its domain then $\epi f = \clco(\gph f)$. 

To prove the converse  implication,  suppose now that $f$ has an affine majorant on its domain, i.e.   there exists  $(a,\beta)\in\bE\times \bR$, such that $ f(x)\leq  \ip{a}{x}+\beta=:g(x)$ for all $x\in \dom f$.
Now pick $\bar x\in \dom f$. Then $(\bar x, g(\bar x) +1)\in \epi f$, and it hence suffices to show that $(\bar x, g(\bar x) +1)\not\in\clco(\gph F)$. Assume, by contradiction, that  $(\bar x, g(\bar x) +1)\in\clco(\gph F)$. Then with $\kappa:=\dim \bE\times \R$, by Carath\'eodory's theorem \cite[Theorem 17.1]{Roc 70}, for $i=1,\dots, \kappa+1$ there exist 
sequences $\{t_{i,k}\geq 0\}_{k\in\bN}$ and $\{x_i^k\in \dom f\}_{k\in\bN}$ such that  $\sum_{i=1}^{\kappa+1}t_{i,k}=1$ and 
$ \sum_{i=1}^{\kappa+1} t_{i,k}(x_i^k, f(x_i^k))\to_{k\to\infty} (\bar x, g(\bar x) +1)$. Consequently 
\[
g(\bar x)+1  =  \lim_{k\to\infty} \sum_{i=1}^{\kappa+1} t_{i,k}f(x_i^k)\leq \lim_{k\to\infty} \sum_{i=1}^{\kappa+1} t_{i,k} g(x_i^k)=\lim_{k\to\infty} \ip{a}{\sum_{i=1}^{\kappa+1} t_{i,k} x_i^k}+\beta=g(\bar x),
\]
which is the desired contradiction and thus concludes the proof.
\end{proof}

\noindent
The questions as to what can be said when $f$ in the above result is only proper and convex, but not necessarily closed is answered as the 
opening to Section \ref{sec:Suff}.

To start our analysis of the vector-valued case we now formally introduce the notion of {\em $K$-minorants and -majorants}, respectively.

\begin{definition}[$K$-minorants/-majorants]\label{def:KMin}
Let  $K\subset \bE_1 $ be a cone, and let  $F\colon\bE_1\to\bE_2^\bullet$ be proper.  A function $G:\bE_1\to\bE_2$ is  said to be:

\begin{itemize}
\item  a {\em $K$-majorant} of $F$ on $S\subset\dom F$ if 
\[
G(x)-F(x)\in K \quad \forall x\in S,
\]

\item a {\em $K$-minorant} of $F$  on $S\subset \dom F$ if 
\[
F(x)-G(x)\in K \quad\forall x\in S.
\]
For $S=\dom F$, we say that $G$ is a $K$-minorant of $F$.

\end{itemize}

\end{definition}

\noindent
Naturally, in view of the scalar case from Theorem \ref{p:epi=conv-gph-1Dcase}, we are mainly interested in the case where $G$ is an affine function.

For a function $F:\bE_1\to\bE_2^\bullet$,  we record that, for a pointed, closed, convex cone $K$ such that $\Epi{K}{F}=\clco(\gph F)$, there  cannot exist both an affine $K$-majorant on the domain and an affine $K$-minorant of $F$.

\begin{proposition}
\label{p:minmax} 
Let $\{0\}\subsetneq K\subset\bE_2$ be a  closed, convex,  pointed cone and let $F\colon\bE_1\to\bE_2^\bullet$ be proper. If $\Epi{K}{F}=\clco(\gph F)$, then $F$ cannot have both an affine $K$-majorant on its domain and an affine $K$-minorant.
\end{proposition}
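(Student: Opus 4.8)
The plan is to argue by contradiction and to observe at the outset that the \emph{majorant} alone is what clashes with the identity $\Epi{K}{F}=\clco(\gph F)$; the minorant will turn out to be unnecessary, so the stated conclusion follows a fortiori from the sharper fact that $F$ cannot even have an affine $K$-majorant on its domain once $\Epi{K}{F}=\clco(\gph F)$ and $\{0\}\neq K$ is pointed. So suppose $\Epi{K}{F}=\clco(\gph F)$ and that $F$ admits an affine $K$-majorant $G(x)=Lx+c$ on $\dom F$, i.e. $G(x)-F(x)\in K$ for all $x\in\dom F$. Since $\Epi{K}{F}=\clco(\gph F)$ is closed and convex, $F$ is $K$-closed and $K$-convex, so Theorem \ref{t:rec_cnd} applies and yields $\{0\}\times K\subset[\clco(\gph F)]^\infty$.

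The second step is to straighten the graph by the linear bijection $T\colon(x,y)\mapsto(x,y-Lx)$ of $\bE_1\times\bE_2$. Because $F(x)-Lx=c-(G(x)-F(x))\in c-K$ for $x\in\dom F$, we get $T(\gph F)\subset\bE_1\times(c-K)$, and as $c-K$ is closed and convex so is the larger set, giving $\clco(T(\gph F))=T(\clco(\gph F))\subset\bE_1\times(c-K)$. Taking horizon cones, using that a linear bijection commutes with both $\clco$ and $(\cdot)^\infty$, that $T(0,w)=(0,w)$, and that $(c-K)^\infty=(-K)^\infty=-K$ for the closed convex cone $-K$, I obtain
\[
\{0\}\times K=T(\{0\}\times K)\subset T\big([\clco(\gph F)]^\infty\big)=[\clco(T(\gph F))]^\infty\subset\big[\bE_1\times(c-K)\big]^\infty=\bE_1\times(-K).
\]
This forces $K\subset-K$, hence for every $w\in K$ we have $w\in K\cap(-K)=\{0\}$ by pointedness (Lemma \ref{lem:Pointed}), i.e. $K=\{0\}$, contradicting $K\neq\{0\}$. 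In particular $F$ has no affine $K$-majorant on its domain, so it certainly cannot have both a majorant and a minorant.

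If one prefers to avoid horizon cones, the same contradiction is reachable by hand: fix $\bar x\in\dom F$, set $q:=G(\bar x)-F(\bar x)\in K$, and note that for \emph{every} $w\in K$ the point $(\bar x,F(\bar x)+w)$ lies in $\Epi{K}{F}=\clco(\gph F)$, hence by Carath\'eodory's theorem \cite[Theorem 17.1]{Roc 70} is a limit of convex combinations $\sum_i\lambda_i^k(x_i^k,F(x_i^k))$ whose barycenters $\sum_i\lambda_i^k x_i^k\to\bar x$. Pushing the affine map $G$ through and using $G(x_i^k)-F(x_i^k)\in K$ together with closedness of $K$ gives $q-w\in K$ for all $w\in K$; choosing $w=q+v$ with $v\in K$ arbitrary then yields $-v\in K$, so $K\subset-K$ and again $K=\{0\}$ by pointedness. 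In either route I expect the only delicate point to be the bookkeeping around the limit: in the horizon version, checking that $T$ commutes with $[\,\cdot\,]^\infty$ and $\clco$ and that the horizon of $\bE_1\times(c-K)$ is $\bE_1\times(-K)$; in the elementary version, justifying the passage of the affine $G$ through the limit of the Carath\'eodory combinations.
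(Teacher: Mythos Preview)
Your argument is correct, and your observation that only the majorant is needed is a genuine sharpening: once $\Epi{K}{F}=\clco(\gph F)$ with $K$ pointed and nontrivial, $F$ already cannot have an affine $K$-majorant on its domain. In fact the paper's own proof, upon inspection, also only extracts information from the majorant condition (the minorant inequality $F(x)+tv-L(x)\in w+K$ yields $v\in K$ in the limit, which was assumed from the start).

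The paper's route is more direct than either of yours. It does not invoke Theorem~\ref{t:rec_cnd} or Carath\'eodory: it simply observes that the affine sandwich $T(x)+u\geq_K F(x)\geq_K L(x)+w$ confines $\gph F$, hence $\clco(\gph F)$, to the closed convex set $C=\{(x,y):T(x)+u-y\in K,\ y-L(x)-w\in K\}$. Then for $x\in\dom F$ and $v\in K\setminus\{0\}$, the point $(x,F(x)+tv)$ lies in $\Epi{K}{F}=\clco(\gph F)\subset C$ for all $t>0$; the majorant constraint reads $(T(x)+u-F(x))-tv\in K$, and dividing by $t$ and letting $t\to\infty$ gives $-v\in K$, contradicting pointedness. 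Your first version reaches the same endpoint $K\subset -K$ but packages the ray argument into the horizon-cone machinery of Theorem~\ref{t:rec_cnd} plus a linear change of coordinates, which is cleaner conceptually but uses more infrastructure. Your second version replaces the explicit ray by a Carath\'eodory limit, which is closer in spirit to the paper's argument but less economical, since the point $(\bar x,F(\bar x)+w)$ is already known to lie in $\Epi{K}{F}$ and the sandwich bound on $C$ can be applied to it directly without decomposing it as a limit of convex combinations.
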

\begin{proof}
Assume, by contradiction, that there  exist $T, L:\bE_1\to\bE_2$ be linear  and $u, w\in\bE_2$ be such that
\[
T(x)+u\geqslant_K F(x)\geqslant_K L(x)+w,\quad \forall x\in\dom F.
\]
We thus find that
$
\gph F\subset\set{(x,y)\in\dom F\times\bE_2}{T(x)-y\in -u+K,\; y-L(x)\in w+K}:=C.
$
As $K$ is closed and convex, so is $C$, and  we hence deduce that $\clco(\gph F)\subset C$. Now pick $x\in\dom F$ and $v\in K\setminus\{0\}$. Then 
$(x, F(x)+tv)\in\Epi{K}{F}$ for all $t>0$. Since  $\Epi{K}{F}=\clco(\gph F)\subset C$,  it follows that
\[
T(x)-F(x)-tv\in -u+K\AND F(x)+tv-L(x)\in w+K\quad \forall t>0.
\]
Dividing by $t$ and letting $t\to+\infty$ we get
$
 v\in K\cap(-K)=\{0\},
 $
which contradicts the choice of $v$, and therefore  proves the statement.
\end{proof}

\noindent
It is well known that a proper, convex function  posseses  an affine ($\R_+$-)minorant \cite[Theorem 9.20]{BaC 17}.  In the vector-valued setting, we can fall back on this result to get affine $K$-minorants for proper $K$-convex functions when $K$ is a particular  polyhedral cone.

\begin{proposition}\label{prop:ExAffPoly}

Let $K=\set{x\in\bE_2}{\ip{b_i}{x}\geq 0\quad\forall i=1,\dots,m}$ with   $b_1,\dots,b_m$  linearly independent. 
If $F:\bE_1\to\bE_2^\bullet$  is $K$-convex and  proper,   then $F$ has an affine $K$-minorant.
\end{proposition}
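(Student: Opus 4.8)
The plan is to reduce the vector-valued statement to the scalar case by exploiting the special polyhedral structure of $K$. The key observation is that $K=\set{x\in\bE_2}{\ip{b_i}{x}\geq 0,\ i=1,\dots,m}$ with the $b_i$ linearly independent means the dual cone is $-K^\circ=\cone\{b_1,\dots,b_m\}$, a simplicial cone. Since $F$ is $K$-convex, Proposition \ref{p:lsc1} a) tells us that each scalarization $\ip{b_i}{F}$ is convex (as $b_i\in -K^\circ$). Each $\ip{b_i}{F}$ is moreover proper, since $F$ is proper and $\dom \ip{b_i}{F}=\dom F\neq\emptyset$. Hence by the classical fact that every proper convex function admits an affine $\R_+$-minorant \cite[Theorem 9.20]{BaC 17}, for each $i$ there exist $a_i\in\bE_1$ and $\beta_i\in\R$ with
\[
\ip{b_i}{F(x)}\geq \ip{a_i}{x}+\beta_i\quad\forall x\in\dom F.
\]

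Next I would assemble these $m$ scalar minorants into a single affine map $G:\bE_1\to\bE_2$. Because $b_1,\dots,b_m$ are linearly independent, the linear map $B:\bE_2\to\R^m$, $y\mapsto(\ip{b_1}{y},\dots,\ip{b_m}{y})^T$, is surjective onto $\R^m$, so I can choose an affine $G$ whose image scalarizes correctly, namely one satisfying $\ip{b_i}{G(x)}=\ip{a_i}{x}+\beta_i$ for all $i$ and all $x$. Concretely, pick vectors $c_1,\dots,c_m\in\bE_2$ dual to the $b_i$ in the sense $\ip{b_i}{c_j}=\delta_{ij}$ (possible by linear independence, e.g.\ extend to a basis and invert the Gram matrix), and set
\[
G(x):=\sum_{i=1}^m\big(\ip{a_i}{x}+\beta_i\big)c_j,
\]
which is affine in $x$ and satisfies $\ip{b_i}{G(x)}=\ip{a_i}{x}+\beta_i$ by construction.

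Finally I would verify that $G$ is a $K$-minorant of $F$, i.e.\ $F(x)-G(x)\in K$ for all $x\in\dom F$. By the definition of $K$ this amounts to checking $\ip{b_i}{F(x)-G(x)}\geq 0$ for every $i=1,\dots,m$, and this is exactly the scalar minorant inequality combined with $\ip{b_i}{G(x)}=\ip{a_i}{x}+\beta_i$:
\[
\ip{b_i}{F(x)}-\ip{b_i}{G(x)}=\ip{b_i}{F(x)}-\big(\ip{a_i}{x}+\beta_i\big)\geq 0.
\]
Thus $F(x)-G(x)\in K$ for all $x\in\dom F$, as required.

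The main obstacle is the assembly step: I must produce one genuine affine map $G$ valued in $\bE_2$ that simultaneously realizes all $m$ scalar minorants upon pairing with each $b_i$. This is precisely where linear independence of $b_1,\dots,b_m$ is essential — it guarantees the existence of a dual system $\{c_j\}$ and hence that the $m$ linear constraints $\ip{b_i}{G(\cdot)}=\ell_i(\cdot)$ are jointly solvable without interfering with one another. Without independence, the scalar conditions could be inconsistent (one $b_i$ might be a nonnegative combination of others, forcing an incompatible inequality), so this hypothesis cannot be dropped by the present argument.
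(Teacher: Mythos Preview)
Your proof is correct and follows essentially the same approach as the paper: scalarize via $b_i\in -K^\circ$, invoke scalar affine minorants for each $\ip{b_i}{F}$, and assemble these into a single affine $K$-minorant using the linear independence of the $b_i$. The only minor difference is in the assembly step: you construct $G$ directly via a dual system $\{c_j\}$ with $\ip{b_i}{c_j}=\delta_{ij}$ (achieving $\ip{b_i}{G(x)}$ equal to the scalar minorant), whereas the paper builds the linear part via an adjoint and handles the constant part by choosing a point $\bar w\in -\inter K$ (nonempty since $-K^\circ$ is pointed) with $\ip{\bar w}{b_i}\leq \delta_i$; your construction is arguably more direct. (Note the harmless index typo: your displayed sum should read $c_i$, not $c_j$.)
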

\begin{proof} It holds that $-K^\circ=\cone\{b_1,\dots,b_m\}$, see e.g.  \cite[Lemma 6.45]{RoW 98}. This cone is pointed by linear independence of $\{b_1,\dots,b_m\}$, hence $-K$ has nonempty interior, see   \cite[Exercise 6.22]{RoW 98}.
 Now, in view of Proposition \ref{p:lsc1} a), for all $i=1,\dots,m$, the functions $\ip{b_i}{F}:\bE_1\to\rp$ are proper, convex and hence, see e.g. \cite[Theorem 9.20]{BaC 17},  there exist $(c_i,\delta_i)\in \bE_1\times \R\;(i=1,\dots,m)$  such that
\begin{equation}\label{eq:AffMinAux}
\ip{b_i}{F(x)}\geq \ip{c_i}{x}+\delta_i\quad \forall x\in\bE_1,i=1,\dots,m.
\end{equation}
%W.l.o.g. $\delta_i\leq 0$, and so for $v\in K$, we have $\ip{v}{b_i}\geq \delta_i$  for all $i=1,\dots,m$. 
Now, let $A:\bE_1\to\bE_2$ be linear  such that $A(b_i)=c_i$ for all $i=1,\dots, m$ and let $w\in-\inter K.$ Then $\ip{w}{b_i}<0$ for all $i=1,\dots,m$, cf. \cite[Exercise 6.22]{RoW 98}. By positive homogeneity (and since $\inter K$ is a pre-cone), there hence exists $\bar w\in -\inter K$ with  $\ip{\bar w}{b_i}<\delta_i$ for all $i=1,\dots,m$.   Finally, with $\bar L:=A^*$ it hence follows
\[
\ip{b_i}{F(x)}\geq \ip{c_i}{x}+\delta_i \geq \ip{b_i}{\bar L(x)}+\ip{\bar w}{b_i}\quad\forall x\in\bE_1,i=1,\dots,m.
\]
Therefore, for all $x\in\dom F$, we have $F(x)-(\bar L(x)+\bar w)\in K$, and $x\mapsto \bar L(x)+\bar w$ is the desired affine $K$-minorant.
\end{proof}

\subsection{Sufficient conditions}\label{sec:Suff}

\noindent
In this subsection we are primarily concerned with sufficient conditions. We start with some considerations in the scalar case.

\begin{lemma}\label{lem:rint} Let  $f\in\Gamma(\bE)$.  Then  the following hold:
\begin{itemize}
\item[a)] $\clco(\gph \cl f)=\clco(\gph f_{|{\ri(\dom f)}})$. 
\item[b)] $\phi:\bE\to \R$ is an affine majorant of $\cl f$  on $\dom (\cl f)$ if and only if $\phi$ is  affine majorant of $f$ on $\ri(\dom f)$.
\item[c)] If $f$ is $(\dom f)$-closed (hence $f\in \Gamma_0(\dom f)$), then   $\phi:\bE\to \R$ is an affine majorant of $\cl f$  on $\dom (\cl f)$ if and only if $\phi$ is  affine majorant of $f$ on $\dom f$.
\end{itemize}

\end{lemma}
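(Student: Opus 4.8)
The plan is to prove a) directly and then read off b) and c) from it. For part a) I would establish the two inclusions separately. Since $f$ and $\cl f$ agree on $\ri(\dom f)$ and $\ri(\dom(\cl f))=\ri(\dom f)$ (standard properties of the closure of a convex function, see \cite[Theorem 7.4]{Roc 70}), the graph $\gph f_{|\ri(\dom f)}$ coincides with $\gph(\cl f)_{|\ri(\dom f)}$ and is thus contained in $\gph\cl f$; taking closed convex hulls yields $\clco(\gph f_{|\ri(\dom f)})\subset\clco(\gph\cl f)$. For the reverse inclusion it suffices to show $\gph\cl f\subset\cl(\gph f_{|\ri(\dom f)})$. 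Fixing $x\in\dom(\cl f)$ and any $y\in\ri(\dom f)$, I would consider the segment $x_\lambda:=(1-\lambda)x+\lambda y$. By the line segment principle, $x_\lambda\in\ri(\dom f)$ for $\lambda\in(0,1]$, so $(x_\lambda,(\cl f)(x_\lambda))=(x_\lambda,f(x_\lambda))\in\gph f_{|\ri(\dom f)}$. The one-variable map $\lambda\mapsto(\cl f)(x_\lambda)$ is closed, proper and convex with finite endpoint values, so the convexity estimate $(\cl f)(x_\lambda)\leq(1-\lambda)(\cl f)(x)+\lambda(\cl f)(y)$ together with lower semicontinuity forces $(\cl f)(x_\lambda)\to(\cl f)(x)$ as $\lambda\downarrow 0$; since also $x_\lambda\to x$, the point $(x,(\cl f)(x))$ lies in $\cl(\gph f_{|\ri(\dom f)})$. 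Taking closed convex hulls closes a). I expect this reverse inclusion, i.e.\ the continuity of $\cl f$ along segments running from the relative boundary into the relative interior, to be the main technical point.

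For part b) I would exploit the elementary principle that an affine function $\phi$ majorizes a set $S\subset\bE\times\R$ (in the sense $t\leq\phi(x)$ for all $(x,t)\in S$) if and only if it majorizes $\clco S$, because the hypograph $\set{(x,t)}{t\leq\phi(x)}$ is closed and convex. Noting that \emph{``$\phi$ is an affine majorant of $g$ on $\dom g$''} is exactly \emph{``$\gph g$ is contained in the hypograph of $\phi$''}, this gives the chain of equivalences: $\phi$ majorizes $\cl f$ on $\dom(\cl f)$ $\iff$ $\gph\cl f$ lies in the hypograph of $\phi$ $\iff$ $\clco(\gph\cl f)$ does $\iff$ (by a)) $\clco(\gph f_{|\ri(\dom f)})$ does $\iff$ $\gph f_{|\ri(\dom f)}$ does $\iff$ $\phi$ majorizes $f$ on $\ri(\dom f)$. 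This settles b) with no separation argument.

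For part c) I would invoke Lemma \ref{l:f(x)=clf(x)-iff-f-in-Gamma_0(D)}, which states that $f\in\Gamma_0(\dom f)$ is equivalent to $f=\cl f$ on $\dom f$. Granting this, in the forward direction a majorant $\phi$ of $\cl f$ on $\dom(\cl f)\supset\dom f$ restricts to a majorant of $\cl f=f$ on $\dom f$; in the backward direction a majorant $\phi$ of $f=\cl f$ on $\dom f$ restricts to a majorant on $\ri(\dom f)\subset\dom f$, whence part b) upgrades it to a majorant of $\cl f$ on all of $\dom(\cl f)$. Thus c) reduces to b) once the $\Gamma_0(\dom f)$-characterization is in hand.
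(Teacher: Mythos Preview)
Your proposal is correct and for parts a) and c) follows essentially the same route as the paper: the paper also uses that $f=\cl f$ on $\ri(\dom f)$ for one inclusion in a), and for the reverse inclusion it cites \cite[Theorem 7.5]{Roc 70} (continuity of $\cl f$ along segments into the relative interior), which is exactly the argument you spell out by hand. Part c) is identical to the paper's.

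For part b) there is a genuine, if minor, difference. The paper does \emph{not} derive b) from a); instead it repeats the sequential approximation from a) directly: given $\phi\geq f$ on $\ri(\dom f)$ and $x\in\dom(\cl f)$, it picks $x_k\in\ri(\dom f)$ with $x_k\to x$ and $f(x_k)\to(\cl f)(x)$, then passes to the limit in $\phi(x_k)\geq f(x_k)$ using continuity of $\phi$. Your route---observe that ``$\phi$ majorizes $g$ on $\dom g$'' means $\gph g$ sits in the closed convex hypograph of $\phi$, hence is equivalent to $\clco(\gph g)$ sitting there, and then invoke a)---is a cleaner and more conceptual derivation that avoids rerunning the limit argument. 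The paper's version is more self-contained (b) stands without a)), while yours exhibits b) as a formal consequence of a) and makes clear why the closed convex hull is the right invariant. One small point to make explicit in your a): the line segment principle needs $x\in\cl(\dom f)$, which holds because $\dom(\cl f)\subset\cl(\dom f)$.
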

\begin{proof}a)  As $f(x)=\cl f(x)$ for all $x\in\ri(\dom f)$ (\cite[Theorem 7.4]{Roc 70}), we  have  $\gph f_{|{\ri(\dom f)}}\subset \gph \cl f$, and hence $\clco(\gph f_{|{\ri(\dom f)}})\subset\clco( \gph f)$. To prove the converse inclusion let $(x,\cl f(x))\in \gph \cl f$. Invoking \cite[Theorem 7.5]{Roc 70} (and  \cite[Theorem 6.1]{Roc 70}), we find a sequence $\{x_k\in \ri(\dom f)\}\to x$ with $f(x_k)\to \cl f(x)$. Therefore, $\gph \cl f\subset \cl(\gph f_{|{\ri(\dom f)}})\subset \clco(\gph f_{|{\ri(\dom f)}})$, and hence, the desired inclusion follows  by applying the $\clco$-operator on both sides.
\smallskip

\noindent
b) If $\phi$ is an affine majorant of $\cl f$ on $\dom(\cl f)$, then $\phi$ is an affine majorant of $\cl f$ on $\ri(\dom f)\subset\dom(\cl f)$, and hence  an affine majorant of $f$ on  $\ri(\dom f)$, since $f$ and $\cl f$ coincide on $\ri(\dom f)$. In turn, if $\phi$ is an affine majorant of $f$ on $\ri(\dom f)$, then for all $x\in\dom (\cl f)$, since $\ri(\dom (\cl f))=\ri(\dom f)$ (see \cite[Corollary 7.4.1]{Roc 70}), by  \cite[Theorem 7.5]{Roc 70} (and  \cite[Theorem 6.1]{Roc 70}), there exists $\{x_k\in\ri(\dom (\cl f))\}\to x$ with $\lim_k f(x_k) = \cl f(x_k)$. However $\phi(x_k)\geqslant f(x_k)$ and $\phi$ is continuous so $\phi(x)\geqslant \cl f(x)$, thus $\phi$ is an affine majorant of $\cl f$ on $\dom\cl f$.
\smallskip

\noindent
c) By Lemma \ref{l:f(x)=clf(x)-iff-f-in-Gamma_0(D)}, $f(x)=\cl f(x)$ for all $x\in \dom f$. Therefore, if $\phi(x)\geq \cl f(x)$ for all $x\in \dom (\cl f)\supset \dom f$, then $\phi(x)\geq f(x)$ for all $x\in \dom f$. In turn, if $\phi$ is an affine minorant of $f$ on $\dom f\supset \ri(\dom f)$, then b) shows that $\phi$ is an affine minorant of $\cl f$ on $\dom (\cl f)$.

\end{proof}

\noindent
We record some immediate consequences of the foregoing result.

\begin{corollary}
\label{cor:Gamma_0,dom-convgphf=clepif}
Let $f\in\Gamma(\bE)$. Then the following are equivalent:
\begin{itemize}
\item[i)]  $\clco(\gph f_{|{\ri(\dom f)}}) = \cl(\epi f)$;
\item[ii)] $f$ has no affine majorant on $\ri(\dom f)$;
\item[iii)] $\{0\}\times\bR_+ \subset [\clco(\gph f_{|{\ri(\dom f)}})]^\infty$.
\end{itemize}
\begin{proof}  Observe that $\epi(\cl  f) =\cl (\epi f)$,  hence  by Lemma \ref{lem:rint} a) we have 
\begin{equation}\label{eq:i)}
i) \iff  \clco(\gph \cl f)=\epi(\cl f).
\end{equation}
\noindent
'i)$\Leftrightarrow$ii)': By Lemma \ref{lem:rint} b), we have that ii) is equivalent to saying  that $\cl f$ has no affine minorant on its domain. Therefore, the desired equivalence follows with \eqref{eq:i)} from Proposition \ref{p:epi=conv-gph-1Dcase} applied to $\cl f\in \Gamma_0(\bE)$.
\smallskip

\noindent
'i)$\Leftrightarrow$iii)': Apply Theorem \ref{t:rec_cnd} to $\cl f$ and use \eqref{eq:i)}.
\end{proof}

\end{corollary}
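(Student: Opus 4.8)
The plan is to reduce everything to the closed proper convex function $\cl f \in \Gamma_0(\bE)$, for which the scalar results are already in place, and to use Lemma \ref{lem:rint} to transfer the reduction back to the stated conditions on $f$. The entry point is the identity $\epi(\cl f) = \cl(\epi f)$, which is just the definition of the closure of a convex function. Combined with Lemma \ref{lem:rint} a), namely $\clco(\gph f_{|\ri(\dom f)}) = \clco(\gph \cl f)$, this lets me record at the outset that condition i) is equivalent to
\[
\clco(\gph \cl f) = \epi(\cl f),
\]
after which all three conditions can be tested on $\cl f$ rather than on $f$ directly.

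For i) $\iff$ ii), I would apply Theorem \ref{p:epi=conv-gph-1Dcase} to $\cl f \in \Gamma_0(\bE)$: it states that $\epi(\cl f) = \clco(\gph \cl f)$ holds exactly when $\cl f$ has no affine majorant on its domain $\dom(\cl f)$. By Lemma \ref{lem:rint} b), the absence of an affine majorant of $\cl f$ on $\dom(\cl f)$ is equivalent to the absence of an affine majorant of $f$ on $\ri(\dom f)$, which is condition ii). Chaining this with the bridging equivalence above delivers i) $\iff$ ii).

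For i) $\iff$ iii), I would instead apply Theorem \ref{t:rec_cnd} to $\cl f$ with the cone $K = \R_+$ (note that $\cl f$ is $\R_+$-convex and $\R_+$-closed, and that its $\R_+$-epigraph is the ordinary epigraph), giving $\epi(\cl f) = \clco(\gph \cl f)$ if and only if $\{0\} \times \R_+ \subset [\clco(\gph \cl f)]^\infty$. A second use of Lemma \ref{lem:rint} a) rewrites the horizon cone $[\clco(\gph \cl f)]^\infty$ as $[\clco(\gph f_{|\ri(\dom f)})]^\infty$, turning the right-hand side into exactly condition iii); the bridging equivalence then yields i) $\iff$ iii).

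There is no genuine obstacle here: the entire content of the corollary is the passage from the possibly non-closed $f$ to its closure $\cl f$, and parts a) and b) of Lemma \ref{lem:rint} are tailored precisely to effect that passage at the level of closed convex hulls of graphs and of affine majorants, respectively. The only point requiring care is that conditions ii) and iii) are phrased in terms of $f$ and $\ri(\dom f)$ while the invoked theorems speak of $\cl f$ on $\dom(\cl f)$; once the two parts of Lemma \ref{lem:rint} are in hand, this bookkeeping is immediate.
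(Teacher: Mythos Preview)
Your proposal is correct and follows essentially the same approach as the paper's own proof: reduce to $\cl f\in\Gamma_0(\bE)$ via the bridging identity $\clco(\gph f_{|\ri(\dom f)})=\clco(\gph\cl f)$ from Lemma~\ref{lem:rint}~a), then invoke Theorem~\ref{p:epi=conv-gph-1Dcase} and Theorem~\ref{t:rec_cnd} on $\cl f$, translating back with Lemma~\ref{lem:rint}~a) and~b). (Incidentally, the paper's proof contains a typo reading ``minorant'' where ``majorant'' is meant; your version has it right.)
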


\begin{corollary}
\label{c:Gamma_0,dom-convgphf=clepif}
Let $f\in\Gamma_0(\dom f)$. Then the following are equivalent:
\begin{itemize}
\item[i)]$\clco(\gph f) = \cl(\epi f)$;
\item[ii)]  $f$ has no affine majorants on its domain;
\item[iii)]$\{0\}\times\bR_+ \subset [\clco(\gph f)]^\infty$.
\end{itemize}
\end{corollary}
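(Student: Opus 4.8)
The plan is to reduce the statement entirely to Corollary \ref{cor:Gamma_0,dom-convgphf=clepif}, by showing that under the standing hypothesis $f\in\Gamma_0(\dom f)$ each of the three conditions here coincides with the corresponding condition in that corollary. The single bridge I need is the identity $\clco(\gph f)=\clco(\gph f_{|\ri(\dom f)})$, which I would establish first and then use to transfer conditions i) and iii); a short majorant-matching argument then handles condition ii).

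First I would invoke Lemma \ref{l:f(x)=clf(x)-iff-f-in-Gamma_0(D)}: since $f\in\Gamma_0(\dom f)$, it gives $f(x)=(\cl f)(x)$ for all $x\in\dom f$, so that $\gph f\subset\gph \cl f$. Combining this with the trivial inclusion $\gph f_{|\ri(\dom f)}\subset \gph f$ and with Lemma \ref{lem:rint} a), I obtain
\[
\clco(\gph f_{|\ri(\dom f)})\subset \clco(\gph f)\subset\clco(\gph \cl f)=\clco(\gph f_{|\ri(\dom f)}),
\]
whence equality holds throughout. In particular $\clco(\gph f)=\clco(\gph f_{|\ri(\dom f)})$, and consequently also $[\clco(\gph f)]^\infty=[\clco(\gph f_{|\ri(\dom f)})]^\infty$.

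With this identity in hand, condition i) here, namely $\clco(\gph f)=\cl(\epi f)$, is literally condition i) of Corollary \ref{cor:Gamma_0,dom-convgphf=clepif}, and condition iii) here matches condition iii) there by the horizon-cone equality just noted. For condition ii) I would chain Lemma \ref{lem:rint} b) and c): since $f$ is $(\dom f)$-closed, a function $\phi$ is an affine majorant of $\cl f$ on $\dom(\cl f)$ if and only if it is an affine majorant of $f$ on $\ri(\dom f)$ (part b), if and only if it is an affine majorant of $f$ on $\dom f$ (part c). Thus ``$f$ has no affine majorant on $\dom f$'' is equivalent to ``$f$ has no affine majorant on $\ri(\dom f)$'', which is precisely condition ii) of Corollary \ref{cor:Gamma_0,dom-convgphf=clepif}. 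The three equivalences i)$\Leftrightarrow$ii)$\Leftrightarrow$iii) then follow immediately from that corollary. The argument is essentially bookkeeping, and no genuine obstacle arises beyond the graph-closure identity; the only point requiring care is that it rests on $f$ agreeing with $\cl f$ on all of $\dom f$ (not merely on $\ri(\dom f)$), which is exactly what the hypothesis $f\in\Gamma_0(\dom f)$ supplies.
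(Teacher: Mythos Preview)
Your proof is correct and follows essentially the same approach as the paper: both establish the key identity $\clco(\gph f)=\clco(\gph f_{|\ri(\dom f)})$ via Lemma \ref{l:f(x)=clf(x)-iff-f-in-Gamma_0(D)} and Lemma \ref{lem:rint} a), then use this to match conditions i) and iii) with their counterparts in Corollary \ref{cor:Gamma_0,dom-convgphf=clepif}, and finally invoke Lemma \ref{lem:rint} (your explicit chaining of parts b) and c) is exactly what the paper's reference to ``Lemma \ref{lem:rint}'' means) to translate condition ii).
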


\begin{proof} We observe that $f\in\Gamma(\bE)$ (by definition of $\Gamma_0(\dom f)$),  and that  $f(x)=\cl f(x)$  for all $x\in \dom f$,  by Lemma \ref{l:f(x)=clf(x)-iff-f-in-Gamma_0(D)}.  In addition, by Lemma \ref{lem:rint}, we have  $\clco(\gph f|_{\ri(\dom f)}) = \clco(\gph\cl f)$. Thus, we have 
 \[
 \clco(\gph f)\subset\clco(\gph \cl f)=\clco(\gph f|_{\ri(\dom f)})\subset\clco(\gph f),
 \] 
 and hence $\clco(\gph f)=\clco(\gph f|_{\ri(\dom f)})$.  Consequently
 \[
 i) \iff  \clco(\gph f_{|{\ri(\dom f)}}) = \cl(\epi f),
 \]
and
\[
iii) \iff \{0\}\times\bR_+ \subset [\clco(\gph f_{|{\ri(\dom f)}})]^\infty.
\]
Moreover, with Lemma \ref{lem:rint} we find that
\[ 
ii) \iff f\;\text{has no affine majorant on}\; \ri(\dom f).
\] 
Therefore, the claimed equivalences follow from Corollary \ref{cor:Gamma_0,dom-convgphf=clepif}.

\end{proof}

\noindent
We now establish sufficient conditions in the vector-valued case, building on the results provided above. We start with the most general result, and then successively tighten the assumptions to obtain (weaker but) more handy conditions.

\begin{lemma}
\label{l:Kepi=convgph-bis}
Let $ K\subset\bE_2$ be a (nontrivial) closed, convex cone  with $K\subset -K^\circ$, and let $F\colon\bE_1\to\bE_2^\bullet$ be proper, $K$-closed and $K$-convex. Assume  that the following hold:
\begin{itemize}
\item[i)] There is a nonempty set $L\subset K\cap\rge F$ such that $\overline{\cone}L=K$;
\item[ii)] For every  $u\in L$, there exists a convex set $C^u\subset\dom F$ such that $
F(C^u) \subset \bR_+u$;
\item[iii)]  For all $u\in L$ we have  $f_u := \langle u,F\rangle + \delta_{C^u}\in \Gamma_0(C^u)$ and  it has no affine majorant on its domain.
\end{itemize}
Then $\Epi{K}{F}=\clco(\gph F)$. In particular, $K=\hat{K}_F$.
\end{lemma}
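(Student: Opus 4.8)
The plan is to verify the horizon-cone criterion of Theorem~\ref{t:rec_cnd}. Since $F$ is $K$-convex and $K$-closed, that theorem reduces the claim $\Epi{K}{F}=\clco(\gph F)$ to the inclusion $\{0\}\times K\subset[\clco(\gph F)]^\infty$. As $[\clco(\gph F)]^\infty$ is a closed convex cone and, by i), $K=\overline{\cone}L$, it suffices to prove $(0,u)\in[\clco(\gph F)]^\infty$ for each $u\in L$: the closed convex cone $[\clco(\gph F)]^\infty$ then contains the closed convex conical hull of $\{0\}\times L$, namely $\{0\}\times\overline{\cone}L=\{0\}\times K$. The case $u=0$ is trivial, so I fix $u\in L\setminus\{0\}$ for the remainder.

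Next I would extract from iii) a horizon statement for the scalarization. Since $C^u\subset\dom F$ by ii), we have $\dom f_u=C^u$, so iii) reads $f_u\in\Gamma_0(\dom f_u)$ with no affine majorant on its domain. Corollary~\ref{c:Gamma_0,dom-convgphf=clepif} therefore gives $\{0\}\times\bR_+\subset[\clco(\gph f_u)]^\infty$, and in particular $(0,1)\in[\clco(\gph f_u)]^\infty=[\co(\gph f_u)]^\infty$, the last equality being the general identity recorded in Remark~\ref{rem:Drop} (applied with $f_u$ in place of $F$). Unwinding the definition of the horizon cone, there are scalars $\{s_k\}\downarrow 0$ and points $z_k=(x_k,\alpha_k)\in\co(\gph f_u)$ with $s_k x_k\to 0$ and $s_k\alpha_k\to 1$.

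The crux is to lift these scalar data to $\gph F$ by means of the ray condition ii). Writing each $z_k$ as a finite convex combination $\sum_i\mu_i^k\,(y_i^k,f_u(y_i^k))$ with $y_i^k\in C^u$, I would form the companion point $(x_k,w_k):=\sum_i\mu_i^k\,(y_i^k,F(y_i^k))\in\co(\gph F)$. Because $F(C^u)\subset\bR_+u$, on $C^u$ we may write $F(y)=t(y)u$ with $t(y)\geqslant 0$, and then $f_u(y)=\ip{u}{F(y)}=t(y)\norm{u}^2$. Hence $w_k=\big(\sum_i\mu_i^k\,t(y_i^k)\big)u=(\alpha_k/\norm{u}^2)\,u$, so that $s_k(x_k,w_k)\to(0,(1/\norm{u}^2)\,u)$. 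This shows $(0,(1/\norm{u}^2)\,u)\in[\co(\gph F)]^\infty=[\clco(\gph F)]^\infty$, and, the horizon cone being invariant under multiplication by $\norm{u}^2\geqslant 0$, we obtain $(0,u)\in[\clco(\gph F)]^\infty$, as needed.

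Assembling the pieces yields $\{0\}\times K\subset[\clco(\gph F)]^\infty$, whence $\Epi{K}{F}=\clco(\gph F)$ by Theorem~\ref{t:rec_cnd}; the addendum $K=\hat K_F$ then follows at once from Corollary~\ref{c:K^F}. I expect the lifting step of the third paragraph to be the main obstacle: one must check that the second coordinate $w_k$ of the companion combination is determined by $\alpha_k$ alone, independently of the chosen representation of $z_k$, which is precisely what the ray condition $F(C^u)\subset\bR_+u$ secures and what allows the horizon direction $(0,1)$ of $f_u$ to be transported to the direction $(0,u)$ of $F$. (Notably, the listed hypothesis $K\subset-K^\circ$ --- necessary by Corollary~\ref{c:K-included-in-minus-polar-cone} --- is not actually invoked in this sufficiency argument.)
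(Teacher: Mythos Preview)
Your proof is correct and follows essentially the same route as the paper: reduce to the horizon criterion of Theorem~\ref{t:rec_cnd}, invoke Corollary~\ref{c:Gamma_0,dom-convgphf=clepif} for each $f_u$ to obtain $(0,1)\in[\co(\gph f_u)]^\infty$, and lift this scalar horizon direction to $\co(\gph F)$ via the ray condition ii). Your lifting step is in fact slightly more direct than the paper's (which fixes a base point $(x,ru)\in\co(\gph F)$, shows $(x,(r+t)u)\in\clco(\gph F)$ for all $t\geq 0$, and then uses a segment-and-limit trick to push $(0,u)$ into the horizon cone), but the underlying idea---that $F(C^u)\subset\bR_+u$ forces the second coordinate of any convex combination over $C^u$ to be determined by the corresponding $f_u$-combination---is identical; your closing remark that $K\subset -K^\circ$ is never actually used is also correct and applies equally to the paper's argument.
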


\begin{proof}
Let $u\in L \setminus\lbrace 0\rbrace$, and let us prove that $(0,u)\in [\clco(\gph F)]^\infty$. As the latter is a cone, we may assume w.l.o.g that $\|u\|=1$. By iii),  $f_u\in\Gamma_0(C^u)$, hence  Corollary \ref{c:Gamma_0,dom-convgphf=clepif} yields that $\lbrace 0\rbrace\times\bR_+ \subset [\clco(\gph f_u)]^\infty$. %By Proposition \ref{p:horizon-cone-of-closure}, $\lbrace 0\rbrace\times\bR_+ \subset [\co(\gph f_u)]^\infty$.
As $\co(\gph f_u)$ is convex, we  hence have  $(0,t)+\co(\gph f_u)\subset\clco(\gph f_u)$ for all $t\geq 0$. 

Now, let  $(x,r)\in\co(\gph f_u)$.  Hence, with $\kappa:=\dim \bE\times \R$, there there exist  convex combinations   $x=\sum_{i=1}^{\kappa+1} \alpha_ix^i$ and $r=\sum_{u=1}^{\kappa+1}\alpha_i \ip{u}{F}(x^i)$ with $x^i\in C^u\;(i=1,\dots,\kappa+1)$.   By ii),   there exists $\gamma\geqslant 0$ such that
$\sum_{i=1}^{\kappa+1} \alpha^i F(x^i) = \gamma u,$ and thus $\gamma=\ip{u}{\gamma u}=r$. Consequently, $(x,ru)\in\co(\gph F)$.

Moreover, we have  $(x,r+t)\in\clco(\gph f_u)$ for all $t\geq 0$, hence 
 \[
 x=\lim_{n\to\infty}\sum_{i=1}^{\kappa+1} \alpha^{i,n}_tx^{i,n}_t \AND r+t=\lim_{n\to\infty}\sum_{i=1}^{\kappa+1}\alpha^{i,n}_t \ip{u}{F}(x^{i,n}_t)
\]
for certain $x^{i,n}_t\in C^u$ and $\alpha^{i,n}_t\geq 0\;(i=1,\dots,\kappa+1)$ with $\sum_{i=1}^{\kappa+1} \alpha^{i,n}=1$ for all $n\in \bN$.
However, by ii),  there exist $\gamma_t^n\geqslant 0\;(n\in\bN)$ such that
\[
\sum_{i=1}^{\kappa+1} \alpha^{i,n}_t F(x^{i,n}_t)=\gamma_t^n u\quad \forall n\in \bN.
\]
Thus
\[
\gamma_t^n=\ip{u}{\sum_{i=1}^{\kappa+1} \alpha^{i,n}_t F(x^{i,n}_t)}= \sum_{i=1}^{\kappa+1}\alpha^{i,n}_t \ip{u}{F}(x^{i,n}_t) \to r+t .
\]
Thus,   $(x,ru+tu)=(x,(r+t)u)\in\clco(\gph F)$  for all $t\geqslant 0$. Now, for all $(\bar x,\bar y)\in\co(\gph F)$, and for all $s\geqslant 0$, we have 
\[
(\bar x,\bar y+su) = \lim_{\varepsilon\downarrow 0} (1-\varepsilon) (\bar x,\bar y)+\varepsilon (x,ru+(s/\varepsilon) u) .
\]
However, by what was argued above, $(x,ru+(s/\varepsilon) u)\in\clco(\gph F)$ for all $\varepsilon>0$,  and we picked $(\bar x,\bar y)\in\co(\gph F)\subset\clco(\gph F)$. By convexity and closedness , it follows that  
\[
(\bar x,\bar y+su)\in\clco(\gph F)\quad \forall (\bar x,\bar y)\in\co(\gph F),\; s\geq 0.
\]
Thus $(0,u)\in [\co(\gph F)]^\infty$ for all $u\in L\setminus \{0\}$ as desired.
For  $u=0$ this is trivially true, hence, altogether we find that $\{0\}\times L\subset [\clco(\gph F)]^\infty$.  But as $[\clco(\gph F)]^\infty$ is a closed convex cone, we have then
$
\overline{\cone}(\{0\}\times L)=\{0\}\times K \subset[\clco(\gph F)]^\infty ,
$ cf. i).  Thus, by Theorem \ref{t:rec_cnd}, we have $\Epi{K}{F}=\clco(\gph F)$.
\end{proof}

\noindent
We record an immediate consequence. 

\begin{proposition}
\label{p:Kepi=convgph-2} Let  $K\neq\lbrace 0\rbrace$ be  a closed, convex cone such that $K\subset -K^\circ$ and such that $F\colon\bE_1\to\bE_2^\bullet$ is proper,  $K$-convex and $K$-closed. Assume  that $K=\cone (b_1,\dots,b_N)$ for $b_1,\dots,b_n\in\rge F$, and that, for any $i=1,\dots,N$, there exists a nonempty convex set $C^{i}\subset F^{-1}(\bR_+b_i)$ such that,  for all $i=1,\dots,N$, the function 
$
f_{b_i}:=\langle b_i,F\rangle +\delta_{C^{b_i}}
$
is $C^{b_i}$-closed and  does not have any affine majorant on $C^{b_i}$.
Then  $\Epi{K}{F}=\clco(\gph F)$.
\end{proposition}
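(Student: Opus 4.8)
The plan is to deduce this proposition directly from Lemma \ref{l:Kepi=convgph-bis} by taking as the set $L$ the finite family of generators $\{b_1,\dots,b_N\}$. Since all the standing hypotheses on $K$ and $F$ (closed convex cone, $K\subset -K^\circ$, and $F$ proper, $K$-closed, $K$-convex) are inherited verbatim, the work reduces to checking conditions i)--iii) of that lemma.

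For i), I would set $L:=\{b_1,\dots,b_N\}$. Each $b_i$ lies in $\rge F$ by hypothesis and in $K$ because $K=\cone(b_1,\dots,b_N)$ contains its own generators, so $L\subset K\cap\rge F$; moreover $\overline{\cone}L=\cl(\cone(b_1,\dots,b_N))=\cl K=K$, using that $K$ is closed. For ii), for each $u=b_i$ I would take the convex set $C^{u}:=C^{b_i}=C^i$ furnished by the assumptions: from $C^i\subset F^{-1}(\bR_+b_i)$ we obtain $C^i\subset\dom F$ and $F(C^i)\subset\bR_+b_i=\bR_+u$, exactly as required.

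The only condition needing a brief argument is iii), that $f_{b_i}=\ip{b_i}{F}+\delta_{C^{b_i}}$ belongs to $\Gamma_0(C^{b_i})$ and admits no affine majorant on its domain $C^{b_i}$. The latter is assumed outright. For membership in $\Gamma_0(C^{b_i})$, I would note that $b_i\in K\subset -K^\circ$, so by Proposition \ref{p:lsc1} a) the $K$-convexity of $F$ forces $\ip{b_i}{F}$ to be convex; adding the indicator of the convex set $C^{b_i}$ preserves convexity, and properness holds since $\dom f_{b_i}=C^{b_i}$ is nonempty and $f_{b_i}$ never takes $-\infty$. The $C^{b_i}$-closedness is precisely the assumed hypothesis. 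With i)--iii) verified, Lemma \ref{l:Kepi=convgph-bis} yields $\Epi{K}{F}=\clco(\gph F)$. As every step is a direct verification, there is no real obstacle; the only point to track carefully is that the containment $K\subset -K^\circ$ places each generator $b_i$ in $-K^\circ$, which is what licenses the appeal to Proposition \ref{p:lsc1} a) for the scalarization $\ip{b_i}{F}$.
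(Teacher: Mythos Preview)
Your proof is correct and takes essentially the same approach as the paper: the paper's own proof is the one-liner ``Apply Lemma \ref{l:Kepi=convgph-bis}, with $L=\{b_1,\dots,b_N\}$,'' and you simply spell out the verification of conditions i)--iii) of that lemma in detail. Your use of $K\subset -K^\circ$ together with Proposition \ref{p:lsc1} a) to secure convexity of $\ip{b_i}{F}$ (and hence of $f_{b_i}$) is exactly the point the paper leaves implicit.
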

\begin{proof}
Apply Lemma \ref{l:Kepi=convgph-bis}, with $L=\{b_1,\dots,b_N\}$.
\end{proof}

\noindent
To wrap up this section we want to provide a simplified version of Lemma \ref{l:Kepi=convgph-bis} with  more restrictive, but less arduous assumptions. To this end, we need the following lemma.

\begin{lemma}
\label{l:relative-interior-of-K-when-K-in-polar-cone}
Let $K\subset\bE_2$ be a closed, convex cone  with $K\subset -K^\circ$, and let  $F\colon\bE_1\to\bE_2^\bullet$ be proper, $K$-convex and $K$-closed. Then   $\ip{u}{F}\in\Gamma_0(\bE_1)$ for all $u\in\ri K$.
\end{lemma}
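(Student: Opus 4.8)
The plan is to verify the three constituents of membership in $\Gamma_0(\bE_1)$ for $g:=\ip{u}{F}$ with $u\in\ri K$ — properness, convexity, and lower semicontinuity — the first two being routine and the last being the crux. Properness and convexity come essentially for free: since $F$ maps into $\bE_2$ on its nonempty domain, $g$ is real-valued on $\dom g=\dom F\neq\emptyset$ and $+\infty$ elsewhere, hence proper; and because $u\in\ri K\subset K\subset -K^\circ$, the $K$-convexity of $F$ together with the implication iii)$\Rightarrow$ii) of Proposition \ref{p:lsc1} a) shows that $g$ is convex, so $g\in\Gamma(\bE_1)$.

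The decisive geometric step I would carry out by hand is the inclusion $\ri K\subset\inter(-K^\circ)$. First, $K\subset -K^\circ$ forces $K$ to be pointed: if $v,-v\in K$ then $\ip{v}{v}\geq 0$ and $\ip{v}{-v}\geq 0$ give $v=0$. Next I would establish that $\ip{u}{v}>0$ for every $v\in K\setminus\{0\}$. One always has $\ip{u}{v}\geq 0$, as $u,v\in K\subset -K^\circ$; and if $\ip{u}{v}=0$, then, using $u\in\ri K$ and $v\in K\subset\aff K=\para K$ to pick $\varepsilon>0$ with $u-\varepsilon v\in K$, the inequality $0\leq\ip{u-\varepsilon v}{v}=-\varepsilon\|v\|^2$ forces $v=0$. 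Since the base $\{v\in K\mid\|v\|=1\}$ is compact ($K$ being closed), positivity of $\ip{u}{\cdot}$ there yields a constant $c>0$ with $\ip{u}{v}\geq c\|v\|$ for all $v\in K$; a direct estimate $\ip{w}{v}=\ip{u}{v}+\ip{w-u}{v}\geq (c-\|w-u\|)\|v\|\geq 0$ for $\|w-u\|<c$ then shows $B_c(u)\subset -K^\circ$, i.e. $u\in\inter(-K^\circ)$.

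Finally I would transfer lower semicontinuity from Pennanen's scalarization theory rather than by hand. Recalling that $\hat K_F$ exists (Proposition \ref{p:def-of-K_F^hat}, since by hypothesis $F$ is $K$-closed and $K$-convex) and is the smallest such cone, so that $\hat K_F\subset K$, polarity reversal gives $-K^\circ\subset -\hat K_F^\circ$, whence
$u\in\inter(-K^\circ)\subset\inter(-\hat K_F^\circ)=\ri(-\hat K_F^\circ)$,
the last equality holding because $\hat K_F\subset K$ is pointed, so $-\hat K_F^\circ$ is full-dimensional. As established in the proof of Proposition \ref{p:dual-cone-of-K_F^hat} via \cite[Corollary 7.4(ii)]{Pen 99}, one has $\ip{u}{F}\in\Gamma_0(\bE_1)$ for all $u\in\ri(-\hat K_F^\circ)$, which completes the argument.

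The main obstacle is exactly this last lower-semicontinuity step. A naive sequential proof (take $(x_k,\alpha_k)\in\epi g$ with $(x_k,\alpha_k)\to(x,\alpha)$ and try to exploit closedness of $\Epi{K}{F}$) breaks down because $F(x_k)$ may escape to infinity along directions of $u^\perp$ while $\ip{u}{F(x_k)}$ stays bounded, so the $K$-closedness of $\Epi{K}{F}$ gives no control on the limit. This is precisely why I route the closedness conclusion through the relative-interior membership $u\in\ri(-\hat K_F^\circ)$ and Pennanen's result, reducing the analytic difficulty to the purely geometric inclusion $\ri K\subset\inter(-K^\circ)$ proved above.
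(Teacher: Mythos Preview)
Your proof is correct and takes a genuinely different — and in fact shorter — route than the paper's.

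The paper proceeds by case distinction: if $\ri K\subset\ri(-K^\circ)$ one invokes \cite[Corollary~7.4(ii)]{Pen 99} directly; otherwise (arguing that then $\aff K\subsetneq\aff(K^\circ)$) the paper projects $F$ onto $\aff K$ to define an auxiliary map $G$, establishes that $G$ is $K$-convex and $K$-closed as a map into $\aff K$, and re-applies Pennanen's result in that subspace. Your argument sidesteps this machinery entirely by proving the geometric inclusion $\ri K\subset\inter(-K^\circ)$ outright (pointedness of $K$, strict positivity $\ip{u}{v}>0$ on $K\setminus\{0\}$, then a compactness-of-the-base argument). Since $\inter(-K^\circ)\subset\ri(-K^\circ)$ always, this actually shows that the paper's second case is \emph{vacuous}: under $K\subset -K^\circ$ one always has $\ri K\subset\ri(-K^\circ)$, and the projection step with the auxiliary $G$ is never needed.

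One minor simplification: the detour through $\hat K_F$ in your last paragraph is unnecessary. Having shown $u\in\inter(-K^\circ)\subset\ri(-K^\circ)$, you may apply \cite[Corollary~7.4(ii)]{Pen 99} directly to $K$ itself (since $F$ is $K$-closed and $K$-convex by hypothesis) to conclude $\ip{u}{F}\in\Gamma_0(\bE_1)$. The passage via $\hat K_F\subset K$ and the pointedness of $\hat K_F$ is correct but adds nothing.
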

\begin{proof}  We observe from   \cite[Corollary 7.4(ii)]{Pen 99} that   $\ip{u}{F}\in\Gamma_0(\bE_1)$ for all $u\in\ri(-K^\circ)$. Thus if  $\ri K\subset\ri(-K^\circ)$ there is nothing to prove. 

Hence, we only need to consider the case  $\ri K\nsubseteq\ri(-K^\circ)$. Since we assume that $K\subset -K^\circ$, by the definition of the relative topology, this can only hold, if    $\aff K \varsubsetneq\aff(K^\circ)$. We note that both of these sets contain $0$, and hence are subspaces of $\bE_2$. In particular,  the orthogonal projection  $p:\bE_2\to \bE_2$   onto $\aff K$ (which is ordered by $K$) is a linear self-adjoint operator. We define $G:\bE_1\to (\aff K)^\bullet$ by 
\[
G(x):=\begin{cases} p(F(x)),& x\in \dom F,\\
+\infty_\bullet, & \text{else.}
\end{cases}
\]
Note that for all $\alpha\in (0,1)$ and  $x,y\in\dom F$,  we have
$
\alpha F(x)+(1-\alpha) F(y) - F(\alpha x+(1-\alpha)y) \in K.
$
Hence, as $K\subset \aff K$ and by linearity of $p$, for all $\alpha\in (0,1)$ and $x,y\in\dom F=\dom G$, we have
\[
\alpha F(x)+(1-\alpha) F(y) - F(\alpha x+(1-\alpha)y) = \alpha G(x)+(1-\alpha) G(y) - G(\alpha x+(1-\alpha)y).
\]
Therefore, $G$ is $K$-convex. Moreover, if we denote $D:=\dom G$ and  $H:=F-G:D\to \aff(K)$, then 
\[
\alpha H(x)+(1-\alpha) H(y) - H(\alpha x+(1-\alpha)y) = 0\quad \forall x,y\in D,\;\alpha\in(0,1).
\]
Hence, $H$ is $\{0\}$-convex, and by Proposition \ref{p:0-convexity-and-affine-functions}, there exists an affine function $\hat H:\bE_1\to \aff K$ such that $\hat H_{|D}=H$.

Now, let $\{(x_k,z_k)\in\Epi{K}{G}]\}\to (x,z)\in \bE_1\times\aff K$. Then, for all $k\in \bN$,  $x_k\in\dom G$, and there exists $v_k\in K$ such that 
\[
z_k = G(x_k)+v_k = F(x_k)-H(x_k)+v_n=F(x_k)-\hat{H}(x_k)+v_k.
\]
As $\hat{H}:\bE_1\to \aff K$ is affine, it is continuous, so $\hat{H}(x_k)\to \hat H(x)$,  and thus  $F(x_k)+v_k\to z+\hat{H}(x)$. Therefore $\{(x_k,F(x_k)+v_k)\in\Epi{K}{F}\}\to(x,z+\hat{H}(x))$. As $F$ is $K$-closed, we have $(x,z+\hat{H}(x))\in\Epi{K}{F}$, thus $x\in\dom F=\dom G$, $\hat{H}(x)=H(x)$, and $z+H(x)-F(x)\in K$, so $z-G(x)\in K$ and $(x,z)\in\Epi{K}{G}$. This proves that $G$ is $K$-closed.

Let  $K'$ be the dual cone of  $K$ in $\aff K$. As $K\subset -K^\circ$ by assumption,  we  consequently   obtain 
$
K\subset -K^\circ\cap\aff K= K'\subset \aff K.
$
Hence, $\ri K \subset\ri K'$. Moreover, as $G:\bE_1\to (\aff K)^\bullet$ is proper, $K$-closed and $K$-convex, by \cite[Corollary 7.4(ii)]{Pen 99}, we have $\ip{u}{G}\in\Gamma_0(\bE_1)$  for all $u\in\ri K'$. But for  any $u\in\ri K\subset \aff K$, as $p$ is self-adjoint, we have  $\ip{u}{G}=\ip{u}{p(F)}=\ip{u}{F}$. Thus, for any $u\in\ri K$ we have $\ip{u}{F}\in\Gamma_0(\bE_1)$.
\end{proof}

\begin{proposition}
\label{p:Kepi=convgph-1}
Let  $K\subset\bE_2$ be a proper, closed, convex cone such that $K\subset -K^\circ$ and let  $F\colon\bE_1\to\bE_2^\bullet$ be proper,  $K$-convex and $K$-closed with $\ri K\subset\rge F$. Moreover, assume that,  for any $u\in\ri K$, there exists a nonempty convex set $C^u\subset F^{-1}(\bR_+u)$ such that  
$
f_u:= \langle u,F\rangle + \delta_{C^u}
$
does not have any affine majorant on its domain. 
Then, $\Epi{K}{F}=\clco(\gph F)$.
\end{proposition}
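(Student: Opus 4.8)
The plan is to derive this as a direct specialization of Lemma~\ref{l:Kepi=convgph-bis}, taking $L := \ri K$. Since $K$ is a nonempty closed convex cone, $\ri K$ is a nonempty convex pre-cone, so $L$ is a legitimate candidate, and it remains only to verify hypotheses i)--iii) of that lemma.

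For i), I would first note $\ri K \subset K$ and, by hypothesis, $\ri K \subset \rge F$, so $L = \ri K \subset K \cap \rge F$. To see $\overline{\cone}\, L = K$, observe that $\ri K$ is closed under positive scalar multiplication (being a pre-cone) and, by Lemma~\ref{l:ri-of-convex-cone}, also under addition, since $\ri K + \ri K \subset \cl K + \ri K \subset \ri K$; hence $\cone(\ri K) = \ri K \cup \{0\}$, whose closure is $\cl(\ri K) = \cl K = K$. For ii), the standing hypothesis already supplies, for each $u \in \ri K = L$, a nonempty convex set $C^u \subset F^{-1}(\bR_+ u) \subset \dom F$, and the inclusion $C^u \subset F^{-1}(\bR_+ u)$ is precisely the requirement $F(C^u) \subset \bR_+ u$.

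The crux is hypothesis iii). The absence of an affine majorant of $f_u = \ip{u}{F} + \delta_{C^u}$ on its domain is assumed outright, so the only thing left to establish is $f_u \in \Gamma_0(C^u)$. Convexity and properness are immediate: $C^u$ is nonempty and convex, and since $u \in \ri K \subset K \subset -K^\circ$, Proposition~\ref{p:lsc1} a) makes $\ip{u}{F}$ convex, so $f_u$ is proper and convex. The delicate point is $C^u$-closedness. Here I would invoke Lemma~\ref{l:relative-interior-of-K-when-K-in-polar-cone}, whose hypotheses ($K \subset -K^\circ$, and $F$ proper, $K$-convex, $K$-closed) are exactly our standing assumptions, to conclude $\ip{u}{F} \in \Gamma_0(\bE_1)$ for every $u \in \ri K$. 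Global lower semicontinuity of $\ip{u}{F}$ restricts to lower semicontinuity relative to $C^u$, and since $f_u$ agrees with $\ip{u}{F}$ on $C^u = \dom f_u$, it follows that $\liminf_k f_u(x_k) \geq f_u(\bar x)$ for every $\{x_k \in C^u\} \to \bar x \in C^u$, i.e. $f_u$ is $C^u$-closed and hence $f_u \in \Gamma_0(C^u)$.

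With i)--iii) in hand, Lemma~\ref{l:Kepi=convgph-bis} yields $\Epi{K}{F} = \clco(\gph F)$. I expect the only genuine obstacle to be the $C^u$-closedness in iii): the scalarizations $\ip{u}{F}$ need not be lower semicontinuous for $u$ merely in $-K^\circ$, and the purpose of passing through $\ri K$ and Lemma~\ref{l:relative-interior-of-K-when-K-in-polar-cone} is precisely to secure the global (hence relative) lower semicontinuity needed there. Everything else is bookkeeping resting on Lemma~\ref{l:ri-of-convex-cone} and the scalarization characterization in Proposition~\ref{p:lsc1}.
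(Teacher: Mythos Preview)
Your proposal is correct and follows exactly the paper's approach: invoke Lemma~\ref{l:relative-interior-of-K-when-K-in-polar-cone} to obtain $\ip{u}{F}\in\Gamma_0(\bE_1)$ for $u\in\ri K$, hence $f_u\in\Gamma_0(C^u)$, and then apply Lemma~\ref{l:Kepi=convgph-bis} with $L=\ri K$. The paper's own proof is a two-line version of yours, leaving the verification of hypotheses i) and ii) implicit; your added detail (using Lemma~\ref{l:ri-of-convex-cone} to check $\overline{\cone}(\ri K)=K$) is sound but not strictly needed in the write-up.
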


\begin{proof}
By Lemma \ref{l:relative-interior-of-K-when-K-in-polar-cone}, for all $u\in\ri K$, we have $\ip{u}{F}\in\Gamma_0(\bE_1)$, hence $f_u\in\Gamma_0(C^u)$. Applying  Lemma \ref{l:Kepi=convgph-bis}  with $L=\ri K $ yields the desired result.
\end{proof}

\subsection{Examples}\label{sec:Ex}

\noindent
In this section we put our findings from the previous sections to the test on various examples of $K$-convex functions. Throughout, we equip
the matrix space $\R^{n\times m}$ with the {\em Frobenius} inner product $\ip{\cdot}{\cdot}:\R^{n\times m}\times \R^{n\times m}\to \R$, $\ip{X}{Y}=\tr(X^TY)$. In particular, on the space of symmetric matrices $\bS^n$, the transposition is superfluous.

\subsubsection{\boldmath{$F:X\mapsto \frac{1}{2}XX^T$}}

We consider the function 
\begin{equation}\label{eq:GMF}
F:\bR^{n\times m}\to\bS^n, \quad F(X)=\frac{1}{2}XX^T.
\end{equation}

\noindent
It plays a central role in study of the {\em matrix-fractional} \cite{BuH 15, BGH 17, BGH 18} and {\em variational Gram functions} \cite{BHN 21, JFX 17}.

\begin{proposition}\label{prop:GMF} Let $F$ be given by \eqref{eq:GMF}. Then the following hold:
\begin{itemize}
\item[a)] $\hat K_F=K_F=\bS^n_+=\co (\rge F)$.  In case where $m=n$, the convex hull is superfluous.
\item[b)] $F$ is $\bS^n_+$-closed and -convex.
\item[c)]$\clco (\gph F)=\Epi{\bS^n_+}{F}$.
\end{itemize}

\end{proposition}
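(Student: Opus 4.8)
My plan pivots on computing the scalarizations $\ip{u}{F}$ for $u\in\bS^n$ and reading off a clean dichotomy. Writing $X=[x_1,\dots,x_m]$ in terms of its columns $x_j\in\R^n$, so that $XX^T=\sum_{j=1}^m x_jx_j^T$, I would record that for $u\in\bS^n$
\[
\ip{u}{F}(X)=\tfrac12\tr\!\big(uXX^T\big)=\tfrac12\sum_{j=1}^m x_j^T u\,x_j .
\]
This is a finite quadratic form in $X$, continuous on all of $\R^{n\times m}$, and it is convex precisely when $u\succeq 0$: if $u\in\bS^n_+$ each summand is a convex quadratic, whereas if $u$ has a negative eigenvalue, concentrating all the mass in a single column along the corresponding eigenvector produces a direction of strict concavity. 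Since a finite convex quadratic automatically lies in $\Gamma_0(\R^{n\times m})$, the dichotomy I would use throughout is: $\ip{u}{F}$ is convex $\iff$ $\ip{u}{F}\in\Gamma_0(\R^{n\times m})$ $\iff$ $u\in\bS^n_+$.

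Part b) then falls out of the ``in particular'' clause of Proposition \ref{p:lsc1}: the cone $\bS^n_+$ is self-dual, so $-(\bS^n_+)^\circ=\bS^n_+$, and by the dichotomy $\ip{u}{F}\in\Gamma_0(\R^{n\times m})$ for every $u\in\bS^n_+\setminus\{0\}$, while $\bS^n_+\neq\bS^n$; hence $F$ is $\bS^n_+$-closed and $\bS^n_+$-convex. For part a), Proposition \ref{prop:KF} b) identifies $-K_F^\circ=\{u:\ip{u}{F}\text{ convex}\}=\bS^n_+$, and self-duality gives $K_F=\bS^n_+$; since $F$ is $K_F$-closed by b), Proposition \ref{prop:ConeEq} yields $\hat K_F=K_F=\bS^n_+$ (alternatively, $\{u:\ip{u}{F}\in\Gamma_0\}=\bS^n_+$ is already closed, so Proposition \ref{p:dual-cone-of-K_F^hat} gives $-\hat K_F^\circ=\bS^n_+$ directly). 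Finally I would check $\co(\rge F)=\bS^n_+$: taking $X=xe_1^T$ shows $\tfrac12 xx^T\in\rge F$ for every $x\in\R^n$, and the eigendecomposition $Y=\sum_{i=1}^n\mu_i v_iv_i^T$ of any $Y\succeq 0$ exhibits $Y=\sum_{i=1}^n\tfrac1n\big(\tfrac12(\sqrt{2n\mu_i}\,v_i)(\sqrt{2n\mu_i}\,v_i)^T\big)$ as a convex combination of range elements, the reverse inclusion being immediate. When $m=n$ every $Y\succeq 0$ equals $\tfrac12 XX^T$ for $X=\sqrt2\,Y^{1/2}$, so $\rge F=\bS^n_+$ and the hull is superfluous.

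For part c) I would verify the hypothesis of Corollary \ref{cor:Suff}, namely $\{0\}\times\bS^n_+\subset\clco(\gph F)$. The key device is an antipodal-column construction: for any $x\in\R^n$ the matrices $X_\pm:=\pm\,xe_1^T$ satisfy $F(X_+)=F(X_-)=\tfrac12 xx^T$, so averaging gives
\[
(0,\tfrac12 xx^T)=\tfrac12\big(X_+,F(X_+)\big)+\tfrac12\big(X_-,F(X_-)\big)\in\co(\gph F).
\]
Hence $\{0\}\times\{\tfrac12 xx^T:x\in\R^n\}\subset\co(\gph F)$, and taking convex combinations exactly as in the $\co(\rge F)=\bS^n_+$ computation places $\{0\}\times\bS^n_+$ inside the convex set $\co(\gph F)\subset\clco(\gph F)$. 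Combining this containment with the $\bS^n_+$-convexity and $\bS^n_+$-closedness from b), Corollary \ref{cor:Suff} delivers $\Epi{\bS^n_+}{F}=\clco(\gph F)$.

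The genuinely substantive step is the antipodal-column construction of part c); everything else is either the quadratic-form dichotomy or a direct appeal to the machinery of the preceding sections. The only spot demanding real care is the case $m<n$, where $\rge F$ misses the high-rank positive semidefinite matrices and one must honestly pass to the convex hull through rank-one (eigenvalue) decompositions rather than representing each $Y\succeq 0$ as a single $\tfrac12 XX^T$.
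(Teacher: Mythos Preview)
Your proposal is correct and follows essentially the same approach as the paper. The only notable difference is that for part~a) the paper outsources the identification $K_F=\bS^n_+$ to an external reference, whereas you compute it directly via the scalarization dichotomy $\ip{u}{F}(X)=\tfrac12\sum_j x_j^Tu\,x_j$; your argument is therefore more self-contained. For part~c) both proofs exploit the evenness $F(-X)=F(X)$ to place $\{0\}\times\bS^n_+$ inside $\co(\gph F)$ and then invoke Corollary~\ref{cor:Suff}; the paper does this in one stroke using the convex-combination representation of $V\in\bS^n_+$ already established in~a), while you build it up from rank-one pieces $(0,\tfrac12 xx^T)$ first---same idea, slightly different packaging.
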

\begin{proof} a) We know from \cite[Lemma 8]{BHN 21} that $K_F=\bS^n_+$. But as $F$ is continuous and $K_F$ is closed, we have that $F$ is $K_F$-closed, and hence $K_F=\hat K_F$, which shows the first identity. 
For the third, observe that, clearly  $\bS^n_+\supset\co (\rge F)$. On the other hand for $V\in\bS^n_+$, there exists $L\in \R^{n\times n}$  such that  $\frac{1}{2}LL^T=V$. This already shows that $\bS^n_+= \rge F$ if $m=n$. If not, we denote the columns of $L$ by $\ell_1,\dots,\ell_n$ and  set $ x_i:=\sqrt{n}\ell_i$ for all $i=1,\dots,n$, and $X_i:=[x_i,0, \dots,0]\in \R^{n\times m}$. Then 
\[
V=\frac{1}{2}\sum_{i=1}^n \ell_i \ell_i^T=\frac{1}{2}\sum_{i=1}^n\left(\frac{x_i}{\sqrt n}\right)\left(\frac{x_i}{\sqrt n}\right)^T=\sum_{i=1}^n \frac{1}{n}F(X_i)\in \co(\rge F),
\]
which gives the desired inclusion.
\smallskip

\noindent
b) Follows from a).
\smallskip

\noindent
c) We prove that $\{0\}\times\bS_+^n\subset \co(\gph F)$ which then gives the desired result via b) and Corollary \ref{cor:Suff}. To this end, let $(0,V)\in\{0\}\times\bS_+^n$. Hence, by a),  there exist $\alpha_1,\dots,\alpha_r\geqslant 0$ and $X_1,\dots X_r \in\bR^{n\times m}$ such that $\sum_{i=1}^r \alpha_i = 1$ and $V=\sum_{i=1}^r\alpha_i F(X_i)$. However, $F(-X_i)=F(X_i)$. Hence, we also have 
$
V = \sum_{i=1}^r \frac{\alpha_i}{2}F(X_i) + \frac{\alpha_i}{2}F(-X_i) .
$
As $\sum_{i=1}^r \frac{\alpha_i}{2}X_i + \frac{\alpha_i}{2}(-X_i)= 0$, we then have $(0,V)\in\co(\gph F)$.
\end{proof}

\subsubsection{The squared matrix mapping}

We consider the map 
\begin{equation}\label{eq:Sq}
F:\bS^n\to \bS^n,\quad F(X)=X^2. 
\end{equation}

\begin{proposition} Let $F$ be given by \eqref{eq:Sq}. Then the following hold:
\begin{itemize}
\item[a)]  $\hat K_F=K_F=\bS^n_+=\rge F$.
\item[b)] $F$ is $\bS^n_+$-closed and -convex.
\item[c)]$\clco (\gph F)=\Epi{\bS^n_+}{F}$.
\end{itemize}
\begin{proof} a) Using Proposition \ref{prop:KF} b) we know that $-K_F^\circ=\set{V\in\bS^n}{\ip{V}{F}\;\text{convex}}$.
Now for any $V\in\bS^n$, we have  $\nabla \ip{V}{F}(X)=VX+XV$. Therefore, for $X,Y\in \bS^n$, we find that
\[
\ip{V}{F}(X)-\ip{V}{F}(Y)+\ip{\nabla\ip{V}{F}(X)}{Y-X}=-\tr\left((X-Y)V(X-Y)\right).
\]
For $\ip{V}{F}$ to be convex, by the gradient inequality, it is therefore  necessary and sufficient that 
\[
\tr\left((X-Y)V(X-Y)\right)\geq 0\quad \forall X,Y\in\bS^n,
\]
which is equivalent to saying that $V\succeq 0$. Therefore $-K_F^\circ =\bS^n_+$, and by bipolarity, we obtain $K_F=\bS^n_+$. Since $F$ is continuous, we have $K_F=\hat K_F$, and the fact that $\rge F=\bS^n_+$ is obvious.
\smallskip

\noindent
b) Follows from a).
\smallskip

\noindent
c) Use the same reasoning as in the proof of Proposition \ref{prop:GMF} c).
\end{proof}

\end{proposition}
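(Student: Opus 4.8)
The plan is to transcribe, step by step, the argument already used for the Gram map in Proposition~\ref{prop:GMF}, adapting the scalarization computation to $F(X)=X^2$. For part a) I would first determine $-K_F^\circ$ via Proposition~\ref{prop:KF}~b), which identifies it with $\set{V\in\bS^n}{\ip{V}{F}\text{ convex}}$, where $\ip{V}{F}(X)=\tr(VX^2)$. Differentiating $t\mapsto\tr(V(X+tH)^2)$ twice in a direction $H\in\bS^n$ produces the second derivative $2\tr(VH^2)$, so $\ip{V}{F}$ is convex if and only if $\tr(VH^2)\geq 0$ for all $H\in\bS^n$ (equivalently, one may run the gradient-inequality identity the authors exhibit). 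The crucial observation is that $\set{H^2}{H\in\bS^n}=\bS^n_+$, since every positive semidefinite matrix is the square of a symmetric one; hence the condition becomes $\ip{V}{W}\geq 0$ for all $W\in\bS^n_+$, i.e. $V\in\bS^n_+$ by self-duality of the positive semidefinite cone under the Frobenius inner product. Thus $-K_F^\circ=\bS^n_+$, and the bipolar theorem gives $K_F=\bS^n_+$.

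For the remaining identities in a), the equality $\rge F=\bS^n_+$ follows because $X^2\succeq 0$ always, while conversely any $V\succeq 0$ equals $F(V^{1/2})$ through the positive semidefinite square root. To pass from $K_F$ to $\hat K_F$ I would note that, since $F$ is defined and continuous on all of $\bS^n$ and $\bS^n_+$ is closed, $\Epi{\bS^n_+}{F}=\set{(X,Y)}{Y-F(X)\in\bS^n_+}$ is the continuous preimage of a closed cone, hence closed; so $F$ is $K_F$-closed and Proposition~\ref{prop:ConeEq} yields $K_F=\hat K_F$. Part b) is then immediate from a).

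For part c) I would mimic Proposition~\ref{prop:GMF}~c): it suffices to establish $\{0\}\times\bS^n_+\subset\co(\gph F)$ and then invoke Corollary~\ref{cor:Suff}, whose hypotheses---$K=\bS^n_+$ closed and convex, $F$ being $K$-convex and $K$-closed---are furnished by a) and b). The structural fact that powers the construction is the symmetry $F(-X)=(-X)^2=X^2=F(X)$. Given $(0,V)$ with $V\succeq 0$, choose $X$ with $X^2=V$; then $\frac{1}{2}(X,F(X))+\frac{1}{2}(-X,F(-X))=(0,V)$ exhibits $(0,V)$ as a convex combination of graph points, so $\{0\}\times\bS^n_+\subset\co(\gph F)\subset\clco(\gph F)$, and Corollary~\ref{cor:Suff} delivers $\Epi{\bS^n_+}{F}=\clco(\gph F)$.

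All the computations are routine; the only step demanding genuine care is the convexity characterization in a), and specifically the identification $\set{H^2}{H\in\bS^n}=\bS^n_+$, which converts the pointwise inequality $\tr(VH^2)\geq 0$ into membership in the self-dual cone $\bS^n_+$. Once that is in hand, every subsequent step is a direct analogue of the Gram-map argument.
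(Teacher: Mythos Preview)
Your proposal is correct and follows essentially the same route as the paper: for a) you characterize $-K_F^\circ$ via the scalarizations $\ip{V}{F}$, reduce convexity to $\tr(VH^2)\geq 0$ for all $H\in\bS^n$ (the paper reaches the same condition through the gradient inequality, since $\tr((X-Y)V(X-Y))=\tr(V(X-Y)^2)$), and then conclude via self-duality of $\bS^n_+$; for c) you reproduce the $F(-X)=F(X)$ symmetry argument from Proposition~\ref{prop:GMF}~c), in fact slightly more directly since here $\rge F=\bS^n_+$ outright, so a single pair $(X,-X)$ suffices rather than a convex combination of several graph points.
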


\subsubsection{The inverse matrix mapping} 

\noindent
We consider  the  map 
\begin{equation}\label{eq:Inv}
F:\bS^n_{++}\to\bS^n,\quad F(X)=X^{-1}.
\end{equation}

\begin{proposition} Let $F$ be given by \eqref{eq:Inv}. Then the following hold:
\begin{itemize}
\item[a)] $\hat K_F=K_F=\bS^n_+$.
\item[b)] $F$ is $\bS_+^n$-convex and -closed. 
\item[c)]  $\clco (\gph F)=\Epi{\bS^n_+}{F}$.
\end{itemize}

\end{proposition}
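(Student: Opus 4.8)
The plan is to treat (a) and (b) together through the scalarization cone of Proposition~\ref{prop:KF}, and then to read off (c) from the horizon-cone criterion of Theorem~\ref{t:rec_cnd}. The crucial preliminary observation is that $\rge F=\bS^n_{++}=\ri\bS^n_+$, since $X\mapsto X^{-1}$ maps $\bS^n_{++}$ bijectively onto itself.

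First, to identify $K_F$ and establish the convexity in (b), I would invoke Proposition~\ref{prop:KF}~b), which reduces matters to determining for which $V\in\bS^n$ the scalarization $g_V:=\ip{V}{F}$, $g_V(X)=\tr(VX^{-1})$, is convex on $\bS^n_{++}$. A direct differentiation gives $D^2 g_V(X)[H,H]=2\tr\big((X^{-1/2}VX^{-1/2})(X^{-1/2}HX^{-1/2})^2\big)$ for $H\in\bS^n$. Sufficiency is immediate: if $V\succeq 0$ then $X^{-1/2}VX^{-1/2}\succeq 0$ and $(X^{-1/2}HX^{-1/2})^2\succeq 0$, so the Hessian is positive semidefinite and $g_V$ is convex. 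For necessity I would evaluate at $X=I$, where the condition becomes $\tr(VH^2)\geq 0$ for all $H\in\bS^n$; since $\{H^2:H\in\bS^n\}=\bS^n_+$ and $\bS^n_+$ is self-dual, this forces $V\succeq 0$. Hence $-K_F^\circ=\bS^n_+$, and self-duality of $\bS^n_+$ gives $K_F=\bS^n_+$; in particular $F$ is $\bS^n_+$-convex by Proposition~\ref{p:lsc1}~a).

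Next, for $\bS^n_+$-closedness I would argue directly rather than through Proposition~\ref{p:lsc1}~b), which is \emph{not} applicable here: since $\dom F=\bS^n_{++}$ is open, $\ip{u}{F}$ fails to be lower semicontinuous for rank-deficient $u\in\bS^n_+$ whose kernel meets the blow-up directions. Instead, take $(X_k,Y_k)\in\Epi{\bS^n_+}{F}$ with $(X_k,Y_k)\to(X,Y)$, i.e. $X_k\succ 0$ and $Y_k\succeq X_k^{-1}$. If $X$ were singular, pick a unit vector $v$ with $Xv=0$; then $v^T X_k v\to 0^+$, so $v^T Y_k v\geq v^T X_k^{-1}v\geq (v^T X_k v)^{-1}\to+\infty$, contradicting $v^T Y_k v\to v^T Y v<\infty$. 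Thus $X\succ 0$, continuity gives $X_k^{-1}\to X^{-1}$, and closedness of $\bS^n_+$ yields $Y-X^{-1}\succeq 0$; hence $\Epi{\bS^n_+}{F}$ is closed. This completes (b), and by Proposition~\ref{prop:ConeEq} the $K_F$-closedness of $F$ gives $\hat K_F=K_F=\bS^n_+$, completing (a).

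Finally, for (c) I would apply Theorem~\ref{t:rec_cnd}: as $F$ is $\bS^n_+$-convex and $\bS^n_+$-closed, it suffices to verify $\{0\}\times\bS^n_+\subset[\clco(\gph F)]^\infty$. Unlike the preceding two examples, Corollary~\ref{cor:Suff} is of no help, because every convex combination of positive definite matrices is positive definite, so $\{0\}\times\bS^n_+\not\subset\co(\gph F)$; the passage to the horizon (closure) is genuinely needed, which is the main obstacle. I would exhibit the recession directions explicitly on the graph: for a unit vector $v$, set $X_k:=(I-vv^T)+a_k^{-1}vv^T$ with $a_k\to\infty$, so that $X_k^{-1}=(I-vv^T)+a_k vv^T$; scaling by $\lambda_k:=a_k^{-1}\downarrow 0$ gives $\lambda_k X_k\to 0$ and $\lambda_k X_k^{-1}\to vv^T$, whence $(0,vv^T)\in[\gph F]^\infty\subset[\clco(\gph F)]^\infty$. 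Since the latter is a closed convex cone and the rank-one matrices $vv^T$ generate $\bS^n_+$ conically, this yields $\{0\}\times\bS^n_+\subset[\clco(\gph F)]^\infty$, and Theorem~\ref{t:rec_cnd} gives $\clco(\gph F)=\Epi{\bS^n_+}{F}$. (Alternatively, Proposition~\ref{p:Kepi=convgph-1} applies with $C^u=\R_{++}u^{-1}\subset F^{-1}(\R_+ u)$ for $u\in\ri\bS^n_+$, on which $f_u$ reduces to $t\mapsto \|u\|_F^2/t$, a function admitting no affine majorant as $t\to 0^+$.)
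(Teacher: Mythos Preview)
Your proof is correct, and it differs from the paper's in two places worth noting. For the identification $-K_F^\circ=\bS^n_+$, the paper works with the first-order gradient inequality, computing $\ip{V}{F}(X)-\ip{V}{F}(Y)+\ip{\nabla\ip{V}{F}(X)}{Y-X}$ and then, for necessity, building a one-parameter family $X_t$ aligned with a negative eigendirection of $V$ to force the expression to $-\infty$. Your second-order route is cleaner: evaluating the Hessian at $X=I$ collapses the necessity question to $\tr(VH^2)\geq 0$ for all $H\in\bS^n$, which is exactly self-duality of $\bS^n_+$. Both are valid; yours avoids the ad hoc construction. Your closedness argument is essentially the paper's eigenvalue-blowup idea, just executed through a single vector $v\in\ker X$ and the Cauchy--Schwarz inequality $v^TX_k^{-1}v\geq (v^TX_k v)^{-1}$ rather than through the smallest eigenvalue.

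For part~(c) the paper takes precisely the alternative you bracket at the end: it applies Proposition~\ref{p:Kepi=convgph-1} with $C^U=\bR_{++}U^{-1}$ and observes that $\ip{U}{F}(tU^{-1})=\|U\|^2/t$ has no affine majorant. Your primary argument is more direct and more elementary: you exhibit explicit graph sequences $(X_k,X_k^{-1})$ with $\lambda_k(X_k,X_k^{-1})\to(0,vv^T)$, so $(0,vv^T)\in[\gph F]^\infty\subset[\clco(\gph F)]^\infty$, and then use that the rank-one projections conically generate $\bS^n_+$. This bypasses the machinery of Lemma~\ref{l:Kepi=convgph-bis} and Proposition~\ref{p:Kepi=convgph-1} entirely, at the cost of being tailored to this specific $F$; the paper's route, while heavier, illustrates how the general sufficient condition is meant to be deployed.
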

\begin{proof} a)  By Proposition \ref{prop:KF} b), we know that $-K_F^\circ=\set{V\in\bS^n}{\ip{V}{F}\;\text{convex}}$. Now let $V\in\bS^n$ and observe that $\nabla\ip{V}{F}(X)=-X^{-1}VX^{-1}$ for all $X\succ 0$. Therefore, for all $X,Y\succ 0$
\begin{eqnarray*}
\lefteqn{\ip{V}{F}(X)-\ip{V}{F}(Y)+\ip{\nabla\ip{V}{F}(X)}{Y-X}}\\
 & = & \tr(VX^{-1})-\tr(VY^{-1})-\tr(X^{-1}VX^{-1}(Y-X))\\
 &= & -\tr(V(Y^{-1}+X^{-1}YX^{-1}))
\end{eqnarray*}
For $\ip{V}{F}$ to be convex, by the gradient inequality, it is therefore  necessary and sufficient that 
\[
\tr(V(Y^{-1}+X^{-1}YX^{-1})\geq 0\quad \forall X,Y\succ 0.
\]
Since the trace of a product of two positive semidefinite matrices is nonnegative, it follows that $V\succeq 0$ will be sufficient. On the other hand, let $V\not\succeq 0$ and let $V=\sum_{i=1}^n \lambda_i q_iq_i^T$ be the spectral decomposition with $q_1,\dots,q_n$ an orthonormal basis of $\R^n$ and $\lambda_n<0$. Now define 
\[
X_t:=\left(\sum_{i=1}^{n-1} q_iq_i^T +tq_nq_n^T+\frac{1}{t}I\right)^{-1}\succ 0\quad \forall t>0.
\]
Put $Y_t:=X_t\succ 0$. Then, since  $q_1,\dots,q_n$ are  orthonormal,  we have
\begin{eqnarray*}
\tr(V(Y_t^{-1}+X_t^{-1}Y_tX_t^{-1})& = & 2\tr\left(V\left(\sum_{i=1}^{n-1}  q_iq_i^T +tq_nq_n^T+\frac{1}{t}I\right)\right)\\
&= & 2\left[\tr\left(\sum_{i=1}^{n-1} \lambda_iq_iq_i^T\right)+\tr(t\lambda_nq_nq_n^T)+\tr\left(\frac{1}{t}V\right)\right]\\
&= & 2\left[\sum_{i=1}^{n-1} \lambda_i +t\lambda_n+\frac{\tr(V)}{t}\right]\\
& \overset{t\to\infty}{\to}& -\infty.
\end{eqnarray*}
Therefore $-K_F^\circ =\bS^n_+$, and by bipolarity, we obtain $K_F=\bS^n_+$. In view of Corollary \ref{prop:ConeEq} it hence  suffices to show that $F$ is $\bS^n_+$-closed: To this end, let $\{(X_k,Y_k)\in \Epi{\bS^n_+}{F}\}\to (X,Y)$. In particular, $Y_k=X_k^{-1}+S_k$ for $S_k\succeq 0$.  It hence is enough to show that $X\succ 0$. If this were not the case, then  the smallest eigenvalue $\lambda_k$ of $X_k$ goes to zero, and hence $1/\lambda_k$, the largest eigenvalue of $X_k^{-1}$, goes to $+\infty$. This cannot be compensated for by $S_k\succeq 0$, which then contradicts that $\{X_k^{-1}+S_k\}$ is convergent, hence bounded. 
\smallskip

\noindent
b) Follows from a).
\smallskip

\noindent
c)  Firs note that $\rge F=\bS^n_{++}=\ri \bS^n_+$ and that $\bS^n_+$ is self-dual, i.e. $\bS^n_+=-(\bS^n_+)^\circ$.  Moreover, for every $U\in \ri \bS^n_+$, we have that $C^U=F^{-1}(\R_+U)=\R_{++}U^{-1}$ is convex and nonempty. The desired statement will follow  from
  Proposition \ref{p:Kepi=convgph-1} once we show that $\ip{U}{F}$ has no affine majorant on $C^U$. To this end, let $V_t=tU^{-1}\in C^U$  for  $t>0$.  Then $\ip{U}{F}(V_t)=\frac{\|U\|^2}{t}$. Since $0<t\mapsto 1/t$ has no affine majorant on $\R_{++}$, then $\ip{U}{F}$ cannot have an affine majorant on $C^U$.
\end{proof}

\subsubsection{Entry-wise convex functions} 

It is well known \cite{BHN 21} that a component-wise convex  function $\bE_1\to \R^n$ is $\R_{+}$-convex.
This can be slightly  generalized. 

\begin{proposition}
Let $\{b_1,\dots,b_n\}\subset \bE_2$   and let $f_i:\bE_1\to\R$ be convex for all $i=1,\dots,n$. Define $F:\bE_1\to \bE_2$ by
$
F(x)=\sum_{i=1}^nf_i(x)b_i
$
and let $K:=\cone \{b_1,\dots,b_n\}.$ Then the following hold:
\begin{itemize}
\item[a)] $F$ is $K$-convex and $K$-closed.
\item[b)] We have $K\subset -K^\circ$ if and only if $\ip{b_i}{b_j}\geq 0$ for all $i,j=1,\dots,n$. 
\item[c)] Suppose that, for all $i=1,\dots,n$, we have  $C_i:=\bigcap_{i\neq j}\argmin f_j\neq \emptyset$ and that $f_i$ has no affine majorant on $C_i$. Then $\Epi{K}{F}=\clco(\gph F)$.
\end{itemize}

\end{proposition}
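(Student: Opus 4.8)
The plan is to handle the three parts in order, with a) and b) being quick scalarization checks and c) carrying the real weight through the horizon-cone criterion of Theorem~\ref{t:rec_cnd}.

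For part a), the key observation is that for every $u\in -K^\circ$ one has $\ip{u}{b_i}\geq 0$ for all $i$, since $b_i\in K$ and membership $u\in -K^\circ$ means exactly $\ip{u}{v}\geq 0$ for all $v\in K$. Hence
$\ip{u}{F}=\sum_{i=1}^n\ip{u}{b_i}f_i$
is a nonnegative combination of finite-valued convex functions, so it is finite, convex and therefore continuous, i.e. $\ip{u}{F}\in\Gamma_0(\bE_1)$. Since $K=\cone\{b_1,\dots,b_n\}$ is finitely generated it is polyhedral, hence a closed convex cone, and $K$-convexity follows from Proposition~\ref{p:lsc1}~a). For $K$-closedness I would read the conclusion straight off Proposition~\ref{prop:EpiChar}~a), writing $\Epi{K}{F}$ as an intersection of preimages of the closed epigraphs $\epi\ip{u}{F}$ under continuous maps; this route avoids the caveat $K\neq\bE_2$ (the case $K=\bE_2$ being anyway trivial).

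Part b) is a direct computation with the generators. For the ``if'' direction, any $v,w\in K$ are $v=\sum_i\lambda_ib_i$ and $w=\sum_j\mu_jb_j$ with $\lambda_i,\mu_j\geq 0$, whence $\ip{v}{w}=\sum_{i,j}\lambda_i\mu_j\ip{b_i}{b_j}\geq 0$, so $K\subset -K^\circ$. For ``only if'', specialize to $v=b_i$ and $w=b_j$, which both lie in $K$.

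The substance is part c), which I would route entirely through Theorem~\ref{t:rec_cnd}: as $F$ is $K$-convex and $K$-closed by a) and $K$ is a closed convex cone, it suffices to prove $\{0\}\times K\subset[\clco(\gph F)]^\infty$. Since the horizon cone of a convex set is a closed convex cone and $K=\cone\{b_1,\dots,b_n\}$, it is enough to establish $(0,b_i)\in[\clco(\gph F)]^\infty$ for each $i$. Here I would exploit the $\argmin$ structure: on the nonempty convex set $C_i=\bigcap_{j\neq i}\argmin f_j$ every $f_j$ with $j\neq i$ is frozen at its minimum $m_j$, so $F(x)=f_i(x)b_i+c_i$ with the constant $c_i:=\sum_{j\neq i}m_jb_j$. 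Thus $\gph(F|_{C_i})$ is the image of $\gph(f_i|_{C_i})$ under the continuous affine map $\Phi_i\colon(x,s)\mapsto(x,sb_i+c_i)$, whose linear part sends $(0,1)$ to $(0,b_i)$. Now $f_i|_{C_i}=f_i+\delta_{C_i}$ is convex, proper and $C_i$-closed (as $f_i$ is continuous), hence lies in $\Gamma_0(C_i)$, and by hypothesis has no affine majorant on its domain $C_i$; Corollary~\ref{c:Gamma_0,dom-convgphf=clepif} then yields $(0,1)\in[\clco(\gph(f_i|_{C_i}))]^\infty$. Transporting this recession direction through $\Phi_i$ is the delicate step, and the main obstacle I anticipate: taking $\{s_k\}\downarrow 0$ and $\{(\xi_k,\tau_k)\in\clco(\gph(f_i|_{C_i}))\}$ with $s_k(\xi_k,\tau_k)\to(0,1)$, and using $\Phi_i(\clco S)\subset\clco(\Phi_i(S))\subset\clco(\gph F)$ (continuity and affinity of $\Phi_i$), one checks $s_k\Phi_i(\xi_k,\tau_k)=(s_k\xi_k,\,s_k\tau_kb_i+s_kc_i)\to(0,b_i)$, the constant shift $c_i$ being annihilated by the scaling $s_k\downarrow 0$. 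Hence $(0,b_i)\in[\clco(\gph F)]^\infty$ for every $i$, so $\{0\}\times K$ lies in the horizon cone, and Theorem~\ref{t:rec_cnd} delivers $\Epi{K}{F}=\clco(\gph F)$.
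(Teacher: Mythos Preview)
Your proofs of a) and b) are correct and follow the paper's approach (with slightly more detail).

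For c) your argument is correct but takes a genuinely different route from the paper. The paper appeals directly to Proposition~\ref{p:Kepi=convgph-2}, claiming $C_i\subset F^{-1}(\bR_+b_i)$ and invoking that criterion. You instead go back to the horizon-cone characterization of Theorem~\ref{t:rec_cnd} and prove $(0,b_i)\in[\clco(\gph F)]^\infty$ by an explicit affine transport: on $C_i$ the map $F$ collapses to $x\mapsto f_i(x)b_i+c_i$, so $\gph(F|_{C_i})=\Phi_i(\gph(f_i|_{C_i}))$ for the continuous affine $\Phi_i(x,s)=(x,sb_i+c_i)$, and the recession direction $(0,1)$ of $\clco(\gph(f_i|_{C_i}))$ (supplied by Corollary~\ref{c:Gamma_0,dom-convgphf=clepif}) is carried to $(0,b_i)$ because the scaling $s_k\downarrow 0$ kills the constant shift $c_i$. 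This is clean and self-contained. What your approach buys is robustness: Proposition~\ref{p:Kepi=convgph-2} literally requires both $K\subset -K^\circ$ and $F(C^i)\subset\bR_+b_i$, neither of which is assumed in c) --- in general $F(C_i)\subset \bR_+b_i+c_i$ with $c_i=\sum_{j\neq i}(\min f_j)\,b_j$ only lying on the ray $\bR_+b_i$ by accident --- so the paper's citation needs these extra hypotheses or a translation trick to go through. Your affine-transport argument sidesteps both issues in one stroke; the trade-off is that you are essentially re-deriving (a slight generalization of) the mechanism inside Lemma~\ref{l:Kepi=convgph-bis} by hand rather than quoting a packaged sufficient condition.
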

\begin{proof} 
a) Observe that $-K^\circ=\set{y}{\ip{b_i}{y}\geq 0\;(i=1,\dots,n)}$. Therefore  for all $z\in -K^\circ$ we have 
$\ip{z}{F}=\sum_{i=1}^n \ip{b_i}{z} f_i\in\Gamma_0(\bE_1).$ Hence  Propostion \ref{p:lsc1} yields the desired statement.
\smallskip

\noindent
b) Clear as $K^{\circ\circ}=K$.
\smallskip

\noindent
c)  By assumption,  $\ip{F}{b_i}+\delta_{C^i}\in\Gamma_0(C^i)$ and $C^i\subset F^{-1}(\bR_+ \{b_i\})$. The claim hence follows from  Proposition \ref{p:Kepi=convgph-2}.
 \end{proof}

\subsubsection{The spectral function}

The  {\em spectral function}  \cite{BHN 21, Lew 96, Pen 99} is the map $\lambda\colon\bS^n\to\bR^n, \;\lambda(A)=\left[\lambda_1(A),\dots\lambda_n(A)\right]^T$ where $\lambda_1(A)\geq\dots\geq\lambda_n(A)$ are the ordered eigenvalues of $A$ (with multiplicity). 
Define the cone
\begin{equation}\label{eq:SpecK}
K_n=\left\lbrace v\in\bR^n\mid \sum_{i=1}^k v_i \geq 0 , k=1,\dots,n-1,\sum_{i=1}^n v_i = 0 \right\rbrace.
\end{equation}

\noindent
The following result clarifies the convexity properties of $\lambda$ w.r.t $K_n$ and shows, based on  Proposition \ref{p:Kepi=convgph-2} and   Corollary  \ref{c:K-included-in-minus-polar-cone}, respectively, that the question whether  $\Epi{K_n}{\lambda}=\clco(\gph \lambda)$ depends on $n$.

\begin{proposition}[Spectral map]\label{prop:Spec} Let $K_n$ be given by \eqref{eq:SpecK}. Then the following hold:
\begin{itemize}
\item[a)]  $K_n$ is closed, convex and pointed with $K_n^\circ=\set{w\in\R^n}{w_1\geq\dots\geq w_n}$. 
\item[b)] $\lambda$ is $K_n$-convex and $K_n$-closed.
\item[c)] The following are equivalent:
\begin{itemize}
\item[i)] $K_n\subset -K_n^\circ$;
\item[ii)] $n\leq 2$;
\item[iii)]  $\Epi{K_n}{\lambda}=\clco(\gph \lambda)$.
\end{itemize}

 \end{itemize}

\end{proposition}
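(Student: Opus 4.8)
The plan is to treat the three parts in order, with part~c) carrying the real content.

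For part~a), I would first note that $K_n$ is cut out by finitely many homogeneous linear inequalities together with one homogeneous linear equation, so it is a polyhedral convex cone, in particular closed and convex. Pointedness is a short telescoping argument: if $v,-v\in K_n$ then every partial sum $\sum_{i=1}^k v_i$ is simultaneously $\ge 0$ and $\le 0$, hence $=0$ for all $k$, forcing $v=0$. For the polar I would use summation by parts: writing $V_k:=\sum_{i=1}^k v_i$, for any $w\in\R^n$ and any $v$ with $V_n=0$ one has $\langle w,v\rangle=\sum_{k=1}^{n-1}(w_k-w_{k+1})V_k$. As $v$ ranges over $K_n$ the partial sums $(V_1,\dots,V_{n-1})$ range over all of $\R_+^{n-1}$, so the sign of $\langle w,v\rangle$ is governed entirely by the signs of the differences $w_k-w_{k+1}$; this yields the identification of the (dual) cone $-K_n^\circ=\set{w\in\R^n}{w_1\ge\dots\ge w_n}$, which is the form used below.

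Part~b) follows from Proposition~\ref{p:lsc1} once I check that $\langle w,\lambda\rangle$ is convex and continuous for every $w\in-K_n^\circ$. Using the same Abel summation, $\langle w,\lambda\rangle=\sum_{k=1}^{n-1}(w_k-w_{k+1})\,s_k+w_n\tr$, where $s_k(X):=\sum_{i=1}^k\lambda_i(X)$ is the sum of the $k$ largest eigenvalues. By the Ky Fan maximum principle each $s_k$ is a support function of a compact convex set, hence convex and continuous, and for $w\in-K_n^\circ$ the coefficients $w_k-w_{k+1}$ are nonnegative; thus $\langle w,\lambda\rangle$ is a finite-valued convex continuous function, i.e. lies in $\Gamma_0(\R^n)$. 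Since $K_n\neq\R^n$, the final clause of Proposition~\ref{p:lsc1} gives both $K_n$-convexity and $K_n$-closedness at once.

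For part~c) I would close the cycle (ii)$\Rightarrow$(iii)$\Rightarrow$(i)$\Rightarrow$(ii). The implication (iii)$\Rightarrow$(i) is immediate from Corollary~\ref{c:K-included-in-minus-polar-cone}. For (i)$\Leftrightarrow$(ii) I use the description of $-K_n^\circ$ from~a): for $n\le 2$ one checks directly that $K_1=\{0\}$ and $K_2=\set{(t,-t)}{t\ge 0}$ consist of non-increasing vectors, while for $n\ge 3$ the vector $v=(0,1,-1,0,\dots,0)\in K_n$ is not non-increasing, so $K_n\not\subset-K_n^\circ$. The substantive implication is (ii)$\Rightarrow$(iii), which I would obtain from the polyhedral sufficient condition Proposition~\ref{p:Kepi=convgph-2}. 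The case $n=1$ is trivial ($\gph\lambda$ is a line). For $n=2$, write $K_2=\cone\{b_1\}$ with $b_1=(1,-1)\in\rge\lambda$; here $\lambda^{-1}(\R_+b_1)$ is exactly the subspace $C^1:=\set{X\in\bS^2}{\tr X=0}$ of traceless matrices, which is nonempty and convex. On $C^1$ the scalarization $\langle b_1,\lambda\rangle=\lambda_1-\lambda_2$ equals $\sqrt2\,\fnorm{\cdot}$, a continuous convex norm on the two-dimensional space $C^1$; hence $f_{b_1}=\langle b_1,\lambda\rangle+\delta_{C^1}\in\Gamma_0(C^1)$, and a norm admits no affine majorant (evaluating a putative majorant along a ray and its reflection yields a contradiction). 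Proposition~\ref{p:Kepi=convgph-2} then gives $\Epi{K_2}{\lambda}=\clco(\gph\lambda)$. The main obstacle is precisely this verification: identifying $\lambda^{-1}(\R_+b_1)$ with the traceless subspace, recognizing that $\lambda_1-\lambda_2$ restricts there to a multiple of the Frobenius norm, and ruling out affine majorants; the remaining steps (the summation-by-parts polar identity and the convexity of the partial eigenvalue sums) are standard once the Ky Fan principle is invoked.
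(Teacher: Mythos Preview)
Your proof is correct and follows essentially the same strategy as the paper: referencing the polar/convexity facts for a) and b), invoking Corollary~\ref{c:K-included-in-minus-polar-cone} for (iii)$\Rightarrow$(i), exhibiting an explicit non-monotone vector of $K_n$ for (i)$\Rightarrow$(ii), and applying Proposition~\ref{p:Kepi=convgph-2} with $b_1=(1,-1)$ for (ii)$\Rightarrow$(iii). The only noteworthy differences are cosmetic: you take $C^{b_1}$ to be the full traceless subspace of $\bS^2$ (on which $\langle b_1,\lambda\rangle$ is $\sqrt{2}\,\fnorm{\cdot}$), whereas the paper uses the smaller diagonal slice $\{\diag(\alpha,-\alpha)\}$ where the scalarization is $2|\alpha|$; and you correctly compute $-K_n^\circ$ (the dual cone) as the non-increasing vectors, which is indeed the object needed in Proposition~\ref{p:lsc1} and Corollary~\ref{c:K-included-in-minus-polar-cone}---the paper's statement of $K_n^\circ$ carries a sign slip that its own proof implicitly corrects.
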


\begin{proof}  a) The properties of $K$ are straightforward. The formula for $K_n^\circ$ can be found in  e.g. \cite{BHN 21, Lew 96, Pen 99}.
\smallskip 

\noindent
b)  See e.g. \cite{BHN 21, Lew 96, Pen 99} for $K_n$-convexity. The $K_n$-closedness follows because $\lambda$ is continuous on $\bS^n$ and $K_n$ is closed. 
\smallskip

\noindent
c) Consider the following implications:
\begin{itemize} 
\item[]'i)$\Rightarrow$ii)': For $n>2$ we have $[0,\dots,0,1,-1]^T\in K_n\setminus (-K_n^\circ)$, see a). 
\item[] 'ii)$\Rightarrow$iii)':   For $n=1$ there's nothing to prove.  For $n=2$ set  $b_1:= [1;-1]^T$ so that $K_n=\cone \{b_1\}$  and  define $C^{b_1} := \left\{ \left[\begin{array}{cc}
\alpha & 0 \\
0 & -\alpha 
\end{array}\right] \mid \alpha\in\bR \right\} $ which is a subspace, hence nonempty and closed, and convex.  Then $C^{b_1}\subset \lambda^{-1}(\bR_+b_1)$ and we have
$
 \langle b_1,\lambda\rangle\left(\left[\begin{array}{cc}
\alpha & 0 \\
0 & -\alpha 
\end{array}\right]\right) = 2|\alpha|
$  for all  $\alpha\in \R$.
Therefore $\ip{b_1}{\lambda}+\delta_{C^{b_1}}\in \Gamma_0(\bS^n)$ and  has no affine majorant on its domain $C^{b_1}$. Therefore Proposition \ref{p:Kepi=convgph-2} yields the desired implication.
\end{itemize}

\smallskip

\noindent
'iii)$\Rightarrow$i)': Corollary  \ref{c:K-included-in-minus-polar-cone}.
\end{proof}

\section{Convex-convex composite functions} \label{sec:gF}

We start this section with the definition of $K$-increasing functions.

\begin{definition}[$K$-increasing functions]\label{def:KIncr} Let $K\subset \bE$ be a  cone. The  function  $g:\bE\to\bR\cup\{+\infty\}$ is said to be {\em $K$-increasing} if 
\[
y\geq_K x \quad \Longrightarrow\quad  g(y)\geq g(x) \quad \forall x,y\in \bE.
\]
\end{definition}

\noindent
It is well known and explored extensively in the literature \cite{BHN 21,Bot 10,Bot et al. 06,BGW 07} that, given $K\subset\bE$, a $K$-increasing function $g\in\Gamma(\bE_2)$ and a $K$-convex function $F:D\subset\bE_1\to\bE_2$,  the composition
\begin{equation}\label{eq:CC}
g\circ F:\bE_1\to\rp,\quad (g\circ F)(x):=\begin{cases} g(F(x)),& x\in D,\\ +\infty, & \text{else}
\end{cases}
\end{equation}
is convex (and proper if and only if $F(\dom F)\cap \dom  g\neq \emptyset$).   One of the questions we address in this section is the following: given $g\in\Gamma(\bE_2)$ and $F:D\subset \bE_1\to\bE_2$ such that    $g\circ F$ is convex, under which conditions does there exist a  (closed) cone $K$ such that $F$ is $K$-convex and $g$ is $K$-increasing?

\subsection{The horizon cone of a closed, proper, convex function}

For a proper  function  $f\colon\bE\to\rbar$,  its {\em horizon function} $f^\infty:\bE\to \rbar$ is defined  via  
 $\epi f^\infty = (\epi f)^\infty$.
The {\em horizon cone} $\hzn f$ of $f$ is the level set
$
\hzn f:=\set{x\in \bE}{f^\infty(x)\leq 0}.
$
For $f\in\Gamma_0(\bE)$ the horizon function and  horizon cone of $f$ coincide with the respective  {\em recession} objects \cite[Chapter 8]{Roc 70}.
We summarize some fundamental properties of the horizon cone of a closed, proper, convex function.

\begin{proposition}
\label{p:K-increaseness}
Let $g\in\Gamma_0(\bE)$. Then the following hold: 
\begin{itemize}
\item[a)] $g^\infty$ is closed, proper, convex and positively homogenous.
\item[b)] We have
\[
g^\infty(u)=\sup_{t>0} \frac{g(x+tu)-g(x)}{t}\quad \forall x\in \dom g.
\]
In particular, we have 
\[
u\in\hzn g \iff g(x+tu)\leqslant g(x)\quad \forall x\in \dom g, t>0.
\]
\item[c)] $\hzn g$ is a closed convex cone. 

\item[d)] $g$ is   $(-\hzn f)$-increasing.
\item[e)]  $K$ is a cone with respect to which $f$ is increasing if and only if $K\subset -\hzn f$.

\end{itemize}
\end{proposition}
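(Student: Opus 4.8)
The whole proposition is organized around the epigraphical definition $\epi g^\infty=(\epi g)^\infty$, and I would treat part b) as the engine that drives c)--e). For part a), since $g\in\Gamma_0(\bE)$ the epigraph $\epi g$ is nonempty, closed and convex, so its horizon cone $(\epi g)^\infty$ is a closed convex cone containing the origin (cf.\ the preliminaries and \cite[Theorem 3.6]{RoW 98}). Passing through the epigraph--function dictionary, I would read off immediately that $g^\infty$ is closed (closed epigraph), convex (convex epigraph) and positively homogeneous (conical epigraph). The one delicate point is properness: to exclude the value $-\infty$ I would invoke the existence of an affine minorant $g\geq\ip{a}{\cdot}+\beta$ valid for any function in $\Gamma_0(\bE)$ \cite[Theorem 9.20]{BaC 17}, and combine it with monotonicity of the horizon operation to obtain $g^\infty(u)\geq\ip{a}{u}>-\infty$; since $(0,0)\in(\epi g)^\infty$ forces $g^\infty(0)=0$, the domain is nonempty and $g^\infty$ is proper.

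The crux is part b), which I expect to be the main obstacle and the source of everything else. Fix $x\in\dom g$. Because $\epi g$ is closed and convex, its horizon cone coincides with its recession cone \cite[Theorem 8.3]{Roc 70}, so $(u,\mu)\in(\epi g)^\infty$ precisely when $(x,g(x))+s(u,\mu)\in\epi g$ for every $s\geq 0$, i.e.\ $g(x+su)\leq g(x)+s\mu$ for all $s>0$. Rearranged, this is exactly $\sup_{s>0}\tfrac{g(x+su)-g(x)}{s}\leq\mu$. Taking the infimum over admissible $\mu$, and using that $\epi g^\infty=(\epi g)^\infty$ is closed (so that $(u,g^\infty(u))$ belongs to it), yields $g^\infty(u)=\sup_{s>0}\tfrac{g(x+su)-g(x)}{s}$, valid at every $x\in\dom g$. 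The ``in particular'' equivalence then follows by specializing to $\mu=0$: $u\in\hzn g$ means $g^\infty(u)\leq 0$, which by the formula is equivalent to $g(x+tu)\leq g(x)$ for all $t>0$; and since the formula holds at each $x\in\dom g$, this must hold for all such $x$ simultaneously. The real work here is justifying the recession-cone description cleanly, as this is exactly what converts the abstract horizon definition into the usable difference-quotient formula.

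The remaining parts I would dispatch quickly from b). For c), $\hzn g=\lev{0}{g^\infty}$ is a sublevel set of the closed convex function $g^\infty$, hence closed and convex, while positive homogeneity of $g^\infty$ makes it a cone. For d), if $y\geq_{-\hzn g}x$ then $w:=x-y\in\hzn g$; when $y\in\dom g$ I apply the characterization in b) at the base point $y$ with $t=1$ to get $g(x)=g(y+w)\leq g(y)$, and when $y\notin\dom g$ the inequality $g(y)=+\infty\geq g(x)$ is trivial, so $g$ is $(-\hzn g)$-increasing. For e), the implication ``$K\subset-\hzn g\Rightarrow g$ is $K$-increasing'' is immediate from d) and the definition of $K$-increasingness; conversely, if $g$ is $K$-increasing then for any $v\in K$, $x\in\dom g$ and $t>0$ the cone property gives $x\geq_K x-tv$, whence $g(x-tv)\leq g(x)$ for all such $x,t$, which by b) says $-v\in\hzn g$, i.e.\ $v\in-\hzn g$; thus $K\subset-\hzn g$.
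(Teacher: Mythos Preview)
Your argument is correct. The paper's own ``proof'' consists entirely of citations---parts a) and b) are deferred to \cite[Theorem 3.12]{RoW 98}, part c) is noted to follow from a), and parts d) and e) are referred to \cite{BHN 21} and \cite{LLY 20}---so you are not so much taking a different route as unpacking what those references contain. Your derivation of b) via the recession-cone description of $(\epi g)^\infty$ is exactly the standard argument behind \cite[Theorem 3.12]{RoW 98} (equivalently \cite[Theorem 8.5]{Roc 70}), and your treatment of d) and e) from b) reproduces the short proofs in the cited sources. The only thing your write-up adds beyond the paper is self-containment, which is a feature rather than a divergence; nothing in your reasoning departs from the intended line.
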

\begin{proof} a),b) See \cite[Theorem 3.12]{RoW 98}.
\smallskip

\noindent
c)  From a) and the definition of $\hzn f$.
\smallskip

\noindent
d) See    \cite[Lemma 7]{BHN 21} or \cite[Corollary 3.1]{LLY 20}.
\smallskip

\noindent
e)  See \cite[Proposition 3.2]{LLY 20}.
\end{proof}

\noindent
The next example shows that the convexity in part b) is essential, which also shows that the horizon function is not the recession function  (see  \cite{LLY 20}) without convexity.

\begin{example}\label{ex:Noncvx}
Consider $f:\R\to \R$ given by 
\[
f(x)=\begin{cases} 1+x,& x< -1,\\
0, & x\in[-1,1],\\
1-x,& x>1.
\end{cases}
\]
Then $f$ is continuous (hence proper and lsc), but not convex, and it holds that $f^\infty(u) = -|u|$. 
Moreover, for any $u\in\bR$, $\sup_{x\in\bR,t>0} \frac{f(x+tu)-f(x)}{t} = |u|$. Thus $f^\infty(u)\neq\sup_{x\in\bR,t>0} \frac{f(x+tu)-f(x)}{t}$.
\end{example}

\subsection{The \boldmath{$K$}-increasing case}

The next proposition  characterizes the situation where  there exists a cone with respect  which $F:D\to \bE_2$ is convex and $g\in\Gamma_0(\bE_2)$ is increasing. At this, the cone $K_F$,  the smallest closed cone  with respect to which $F$ is convex, comes in to play, which  ties our study here to our  results from Section \ref{sec:Kcvx}.

\begin{proposition} \label{p:condition-F-Kconvex-g-Kincreasing}
Let $g\in\Gamma_0(\bE_2)$ and $F:D\to\bE_2^\bullet$ with $D\subset \bE_1$ convex such that $g\circ F$ is proper. Then the following statements are equivalent.
\begin{enumerate}
\item[i)] There exists a cone $K$ such that $g$ is $K$-increasing and $F$ is $K$-convex;
\item[ii)] $g$ is $K_F$-increasing;
\item[iii)] $K_F\subset -\hzn g$;
\item[iv)] $(\hzn g)^\circ \subset -K_F^\circ$.
\end{enumerate}
\end{proposition}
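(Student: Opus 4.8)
The plan is to use statement iii) as the central hub and establish equivalences radiating from it. The pair ii) and iii) is immediately interchangeable: applying Proposition \ref{p:K-increaseness} e) to $g$ with the closed convex cone $K_F$ shows that $g$ is $K_F$-increasing if and only if $K_F\subset -\hzn g$, which is exactly ii) $\Leftrightarrow$ iii).

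Next I would settle iii) $\Leftrightarrow$ iv) by pure polarity. Both $K_F$ (Proposition \ref{prop:KF} a)) and $\hzn g$ (Proposition \ref{p:K-increaseness} c)) are closed convex cones, the polarity operation is order-reversing, and $(-\hzn g)^\circ=-(\hzn g)^\circ$. Hence $K_F\subset -\hzn g$ yields $-(\hzn g)^\circ=(-\hzn g)^\circ\subset K_F^\circ$, i.e. $(\hzn g)^\circ\subset -K_F^\circ$; conversely, taking polars of iv) and invoking the bipolar theorem (so that $K_F^{\circ\circ}=K_F$ and $(\hzn g)^{\circ\circ}=\hzn g$) recovers iii).

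The implication iii) $\Rightarrow$ i) is trivial: take $K=K_F$. Indeed $F$ is $K_F$-convex by the very definition of $K_F$ in Proposition \ref{prop:KF} a), and iii) together with Proposition \ref{p:K-increaseness} e) makes $g$ be $K_F$-increasing, so $K=K_F$ witnesses i).

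The substantive step is i) $\Rightarrow$ iii), and the difficulty is that the witnessing cone $K$ in i) is a priori only a cone, not necessarily closed, so the defining minimality property of $K_F$—phrased for closed convex cones—cannot be applied to $K$ directly. I would circumvent this by passing to the closure $\cl K$. Since $F$ is $K$-convex, $K$ is convex by Proposition \ref{prop:NecK} b), and then $F$ is $(\cl K)$-convex by Lemma \ref{lem:CC} (as $K\subset \cl K$ are both convex cones); minimality of $K_F$ among closed convex cones therefore gives $K_F\subset \cl K$. On the other hand, $g$ being $K$-increasing gives $K\subset -\hzn g$ by Proposition \ref{p:K-increaseness} e), and since $-\hzn g$ is closed this upgrades to $\cl K\subset -\hzn g$. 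Chaining the two inclusions yields $K_F\subset \cl K\subset -\hzn g$, which is iii), completing the cycle.
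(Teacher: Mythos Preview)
Your proof is correct and follows essentially the same approach as the paper: both hinge on Proposition \ref{p:K-increaseness} e) to translate $K$-increasingness into the inclusion $K\subset -\hzn g$, and on the minimality of $K_F$ among closed convex cones to reach iii). The only cosmetic difference is that in i)$\Rightarrow$iii) you route through the auxiliary cone $\cl K$ to obtain $K_F\subset\cl K\subset -\hzn g$, whereas the paper applies minimality directly to the closed convex cone $-\hzn g$ (after observing $F$ is $(-\hzn g)$-convex); both are equally valid.
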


\begin{proof} We only (need to) show that i), ii) and iii) are equivalent. The equivalence of iii) and iv)  follows from (bi)polarization and the fact that both cones in play are closed and convex.
\smallskip

\noindent
i)$\Rightarrow$ii) : Let   $K\subset \bE_2$ such that $F$ is $K$-convex and $g$ is $K$-increasing. In particular, by Proposition \ref{p:K-increaseness} e), we have  $K\subset -\hzn g$,  and thus  $F$ is $(-\hzn g)$-convex and $g$ is $(-\hzn g)$-increasing, by Proposition \ref{p:K-increaseness} d). As $(-\hzn g)$ is closed and convex, see Proposition \ref{p:K-increaseness} c),   by definition of $K_F$ we have 
$K_F\subset -\hzn g$.  By  Proposition \ref{p:K-increaseness} e) we find that $g$ is $K_F$-increasing.

\smallskip

\noindent
ii)$\Rightarrow$iii): From Proposition \ref{p:K-increaseness} e).
\smallskip

\noindent
iii)$\Rightarrow$i):  Let $K:=K_F$. Clearly, $F$ is $K$-convex and, by Proposition \ref{p:K-increaseness} e),   $g$ is $K$-increasing.
\end{proof}

\noindent
Proposition \ref{p:condition-F-Kconvex-g-Kincreasing} yields the following concrete example.

\begin{example}
\label{e:counter-example-of-goF-convex=>g-K-inc-and-F-K-convex}
Consider $g\colon (x,y)\in\bR^2\mapsto |x|$ and $F\colon (x,y)\in\bR^2\mapsto (x^2,y)$. Hence $g\in\Gamma_0(\bE_2)$ and $g\circ F\colon (x,y)\mapsto x^2 \in\Gamma_0(\bE_1)$. Using Proposition \ref{prop:KF} b), we find that $K_F=\bR_+\times\{0\}$. However, $(-1,0)\leqslant_{K_F} (0,0)$ and $g((-1,0))=1 > 0 = g((0,0))$. Thus, $g$ is not $K_F$-increasing, and  consequently, by Proposition \ref{p:condition-F-Kconvex-g-Kincreasing}, there is no closed cone $K$ such that $F$ is $K$-convex and $g$ is $K$-increasing.
\end{example}

\noindent
We close out by remarking that, if  $\phi\colon\bE_1\to\bR\cup\{+\infty\}$ is proper convex, there always exists a decomposition $\phi = g\circ F$ with $g\in\Gamma_0(\bE_2)$, $F\colon\bE_1\to\bE_2^\bullet$ proper with $g$ $K_F$-increasing: for instance, define $F(x):=(\Phi(x),0,\dots,0)\in\bE_2$ with $\dom F=\dom \Phi$, and $g(y) =y_1$. Then, $g\in\Gamma_0(\bE_2)$, $F$ is $(\bR_+\times\{0\}\times\dots\times\{0\})$-convex and $g$ is $(\bR_+\times\{0\}\times\dots\times\{0\})$-increasing.

\subsection{Beyond \boldmath{$K$}-monotonicity} It was already observed  by Pennanen \cite{Pen 99} and Burke et al. \cite{BHN 21}  that,  in order to obtain convexity of the composition $g\circ F$ in \eqref{eq:CC},
the assumption that $g$ be $K$-increasing can be weakened to 
\begin{equation}\label{eq:KEpiMon}
g(F(x))\leq g(y)\quad \forall (x,y) \in \Epi{K}{F},
\end{equation}
in which case we call {\em  $g$ increasing w.r.t $\Epi{K}{F}$}. 
Concretely, the following result holds.

\begin{proposition}[{\cite[Proposition 1]{BHN 21}}] \label{prop:KEpiMon}
Let $K\subset\bE_2$ be  a convex cone such that that $F:\bE_1\to\bE_2^\bullet$ is $K$-convex and  such that  $g\in\Gamma(\bE_1)$ is   increasing w.r.t $\Epi{K}{F}$ in the sense of \eqref{eq:KEpiMon}.  Then $g\circ F$ is convex.
\end{proposition}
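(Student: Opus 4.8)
The plan is to verify the secant inequality for $g\circ F$ directly, chaining the monotonicity hypothesis \eqref{eq:KEpiMon} with the convexity of $g$. First I would record the domain bookkeeping: $K$-convexity of $F$ forces $\dom F$ to be convex, since it is the image of the convex set $\Epi{K}{F}$ under the projection onto $\bE_1$. As $\dom(g\circ F)\subset\dom F$, for any $x,y\in\dom(g\circ F)$ and $\alpha\in(0,1)$ the point $z:=\alpha x+(1-\alpha)y$ lies in $\dom F$, so $F(z)$ is well defined and the inequality I am about to prove is meaningful.

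The key step is to exploit the secant characterization of $K$-convexity recorded just after Definition \ref{d:Kconvexity-Kepigraph-K-closed}, namely
\[
\alpha F(x)+(1-\alpha)F(y)\geq_K F(z),
\]
which says precisely that the pair $\big(z,\alpha F(x)+(1-\alpha)F(y)\big)$ belongs to $\Epi{K}{F}$. Applying the monotonicity hypothesis \eqref{eq:KEpiMon} to this particular pair then yields
\[
g(F(z))\leq g\big(\alpha F(x)+(1-\alpha)F(y)\big).
\]
Finally I would invoke the convexity of $g$ to bound the right-hand side above by $\alpha\, g(F(x))+(1-\alpha)\, g(F(y))$, and combine the two displays to obtain
\[
(g\circ F)(z)\leq \alpha\,(g\circ F)(x)+(1-\alpha)\,(g\circ F)(y),
\]
which is the desired secant inequality; since $g\circ F$ never takes the value $-\infty$, this establishes its convexity.

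There is no genuine obstacle here: the whole argument is the two-step chain ``ascend from $F(z)$ to the convex combination of values through $\Epi{K}{F}$-monotonicity, then apply the defining inequality of convexity for $g$.'' The only points deserving a word of care are the domain bookkeeping above, and the structural observation that \eqref{eq:KEpiMon} is tailored exactly so that the intermediate point $\big(z,\alpha F(x)+(1-\alpha)F(y)\big)$ produced by $K$-convexity is admissible. This is precisely why the weaker notion of increasingness w.r.t.\ $\Epi{K}{F}$ suffices in place of genuine $K$-monotonicity of $g$.
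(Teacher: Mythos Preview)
Your proof is correct and follows exactly the intended approach. The paper itself does not prove this proposition but merely cites it from \cite{BHN 21}; your argument, however, matches line for line the proof the paper gives for the immediately following (more general) proposition on increasingness with respect to $\co(\gph F)$: take $x,y\in\dom(g\circ F)$, $\alpha\in(0,1)$, observe that $(\alpha x+(1-\alpha)y,\alpha F(x)+(1-\alpha)F(y))$ lies in the relevant set, apply the monotonicity hypothesis, and then the convexity of $g$.
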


\noindent
The next proposition gives a characterization of  the situation where there exists a closed convex cone $K$ such that $g\in\Gamma_0(\bE_1)$ is  increasing w.r.t $\Epi{K}{F}$ and $F$ is $K$-convex.

 \begin{proposition}
Let $g\in\Gamma_0(\bE_2)$ and $F:D\to\bE_2$  for $D\subset\bE_1$ (nonempty convex) such that  $g\circ F$ is proper. Then there exists a closed (convex) cone $K$ such that $g$ is  increasing w.r.t $\Epi{K}{F}$ (in the sense of \eqref{eq:KEpiMon}) and  $F$ is $K$-convex if and only if $g$ is   increasing w.r.t $\Epi{K_F}{F}$.
\end{proposition}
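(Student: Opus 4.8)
The plan is to exploit the fact that the condition \eqref{eq:KEpiMon} becomes \emph{more} restrictive as the cone $K$ grows, combined with the minimality property of $K_F$ from Proposition \ref{prop:KF} a). Rewriting \eqref{eq:KEpiMon} via the identity $\Epi{K}{F}=\gph F+\{0\}\times K$ from \eqref{eq:Kepigraph-1}, the statement that $g$ is increasing w.r.t. $\Epi{K}{F}$ amounts to
\[
g(F(x))\leq g(F(x)+k)\quad\forall x\in\dom F,\; k\in K,
\]
so enlarging $K$ can only add constraints, while shrinking it can only relax them.

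For the "if" direction I would simply take $K:=K_F$. By Proposition \ref{prop:KF} a), $K_F$ is a closed and convex cone with respect to which $F$ is convex, so $F$ is $K_F$-convex; and by hypothesis $g$ is increasing w.r.t. $\Epi{K_F}{F}$. Hence $K=K_F$ witnesses the left-hand statement, and this direction is immediate.

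For the "only if" direction, suppose a closed convex cone $K$ exists with $F$ being $K$-convex and $g$ increasing w.r.t. $\Epi{K}{F}$. Since $F$ is $K$-convex and $K$ is closed and convex, the minimality clause of Proposition \ref{prop:KF} a) yields $K_F\subset K$. By Lemma \ref{l:inculsion-Kepigraphs} (or directly from \eqref{eq:Kepigraph-1}, since $\{0\}\times K_F\subset\{0\}\times K$) this gives $\Epi{K_F}{F}\subset\Epi{K}{F}$. As \eqref{eq:KEpiMon} for $K$ is a universal statement quantified over $(x,y)\in\Epi{K}{F}$, restricting the quantifier to the smaller set $\Epi{K_F}{F}$ preserves its validity: $g(F(x))\leq g(y)$ holds for all $(x,y)\in\Epi{K_F}{F}$, i.e. $g$ is increasing w.r.t. $\Epi{K_F}{F}$.

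The argument is essentially monotonicity-plus-minimality bookkeeping, so I do not anticipate a serious obstacle. The only point that needs care is invoking the correct hypotheses of Proposition \ref{prop:KF} a) — namely that the ambient cone $K$ is closed and convex — in order to secure the inclusion $K_F\subset K$; this is exactly what the characterization of $K_F$ as the \emph{smallest closed convex cone} with respect to which $F$ is convex provides.
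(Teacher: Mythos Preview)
Your proof is correct and follows essentially the same approach as the paper's: both directions use the minimality of $K_F$ (Proposition \ref{prop:KF} a)) to obtain $K_F\subset K$, then the inclusion $\Epi{K_F}{F}\subset\Epi{K}{F}$ via \eqref{eq:Kepigraph-1}, and finally restrict the universal quantifier in \eqref{eq:KEpiMon}. Your write-up is in fact slightly more explicit than the paper's about why the monotonicity condition is preserved under shrinking the cone.
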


\begin{proof}
Suppose that $g$ is   increasing w.r.t $\Epi{K_F}{F}$ and  set $K:=K_F$. Then  $K$ is closed and convex and $F$ is $K$-convex and $g$ is   increasing w.r.t $\Epi{K}{F}$ (by assumption).

Conversely, suppose now that there exists a closed (convex) cone $K$ such that $g$  is   increasing w.r.t $\Epi{K}{F}$ and $F$ is $K$-convex. As $K$ is closed and $F$ is  $K$-convex, we have$K_F\subset K$, by definition of  $K_F$. Therefore
$\Epi{K_F}{F} =\gph F+\{0\}\times K_F\subset \gph F+\{0\}\times K=\Epi{K_F}{F}$, and thus  $g$  is   increasing w.r.t $\Epi{K_F}{F}$.

\end{proof}

\noindent
It turns out that, in Proposition \ref{prop:KEpiMon}, the assumption that $g$ be increasing w.r.t. to $\Epi{K}{F}$ can even be further weakened  substituting  $\co(\gph F)$  for  $\Epi{K}{F}$ by  which, again, ties our considerations here to our previous study.

\begin{proposition}
Let $D\subset \bE_1$ be (nonempty) convex, $F:D\to\bE_2$, and let  $g\in\Gamma(\bE_2)$ be increasing w.r.t. $\co(\gph F)$, i.e.
\begin{equation}\label{eq:MinInc}
g(F(x))\leq g(y)\quad\forall (x,y)\in\co(\gph F).
\end{equation}
Then $g\circ F$ is convex.
\end{proposition}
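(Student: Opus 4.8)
The plan is to verify the secant inequality for $g\circ F$ directly, exploiting that $g$ is convex on $\bE_2$ together with the monotonicity hypothesis \eqref{eq:MinInc}. Since $g\in\Gamma(\bE_2)$ never takes the value $-\infty$, neither does $g\circ F$, so by the discussion of convexity in Section \ref{sec:Prelim} it suffices to establish
\[
(g\circ F)(\alpha x_1+(1-\alpha)x_2)\leq \alpha\,(g\circ F)(x_1)+(1-\alpha)\,(g\circ F)(x_2)
\]
for all $x_1,x_2\in\dom(g\circ F)$ and $\alpha\in(0,1)$. If $x_1\notin\dom(g\circ F)$ or $x_2\notin\dom(g\circ F)$ the right-hand side is $+\infty$ and there is nothing to prove, so I would assume both lie in $\dom(g\circ F)\subset D$; note $D$ convex forces $x:=\alpha x_1+(1-\alpha)x_2\in D$, so $(g\circ F)(x)=g(F(x))$.

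The crux is to produce a point of $\co(\gph F)$ lying over $x$ whose image under $g$ controls the right-hand side. First I would set $y:=\alpha F(x_1)+(1-\alpha)F(x_2)$ and observe that
\[
(x,y)=\alpha\,(x_1,F(x_1))+(1-\alpha)\,(x_2,F(x_2))
\]
is a convex combination of two points of $\gph F$, hence $(x,y)\in\co(\gph F)$. Applying the monotonicity assumption \eqref{eq:MinInc} to this pair yields $g(F(x))\leq g(y)$, and then the convexity of $g$ on $\bE_2$ gives $g(y)=g\big(\alpha F(x_1)+(1-\alpha)F(x_2)\big)\leq \alpha\,g(F(x_1))+(1-\alpha)\,g(F(x_2))$. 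Chaining these two estimates delivers exactly the desired secant inequality.

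There is no genuine analytic obstacle here: the argument is a two-line combination of \eqref{eq:MinInc} with the convexity of $g$, and the only care needed is the bookkeeping with the value $+\infty$ (reducing to $x_1,x_2\in\dom(g\circ F)$) and the elementary verification that the convex combination of graph points lands in $\co(\gph F)$. The point worth emphasizing — and what makes this a strict strengthening of Proposition \ref{prop:KEpiMon} — is that one never needs $K$-convexity of $F$ nor even the cone $K$ at all; the monotonicity of $g$ is required only \emph{along} $\co(\gph F)$ rather than across the whole $K$-epigraph, and the proof uses only those two ingredients.
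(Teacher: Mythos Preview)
Your proof is correct and follows essentially the same approach as the paper: form the convex combination $(x,y)=\alpha(x_1,F(x_1))+(1-\alpha)(x_2,F(x_2))\in\co(\gph F)$, apply \eqref{eq:MinInc} to get $g(F(x))\leq g(y)$, and then use convexity of $g$. Your version is slightly more careful with the bookkeeping around $\pm\infty$ values, but the argument is identical.
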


\begin{proof}
Let $x,y\in\dom(g\circ F)$ and $\alpha\in(0,1)$. Then $(\alpha x+(1-\alpha)y,\alpha F(x)+(1-\alpha)F(y))\in\co(\gph F)$. By \eqref{eq:MinInc},and the convexity of $g$ we find  
\[
 g(F (\alpha x+(1-\alpha)y))\leq g(\alpha F(x)+(1-\alpha)F(y)) \leq \alpha g(F(x))+(1-\alpha)g(F(y)).
\]
Hence $g\circ F$ is convex.
\end{proof}

\subsection*{Acknowledgments} We would like to thank Prof. James V. Burke (University of Washington) for very useful discussions on the material presented here.

\end{document}